\numberwithin{equation}{section}
\DeclareMathOperator{\E}{\mathbb{E}}
\DeclareMathOperator{\rad}{rad}
\DeclareMathOperator{\Id}{Id}
\renewcommand{\Im}{Im}
\renewcommand{\P}{\mathbb{P}}
\newcommand{\R}{\mathbb{R}}
\newcommand{\EE}{\mathcal{E}}
\newcommand{\Ac}{A_S^{\dagger}}
 \newtheorem{theorem}{Theorem}[section]
\newtheorem{proposition}[theorem]{Proposition}
\newtheorem{lemma}[theorem]{Lemma}
\newtheorem{definition}[theorem]{Definition}
\theoremstyle{remark}
\newtheorem{remark}[theorem]{Remark}
\begin{document}

\bibliographystyle{abbrv}

\title{On block Gaussian sketching for the Kaczmarz method}
\author{Elizaveta Rebrova \and Deanna Needell}

\address{Department of Mathematics, University of California - Los Angeles, 520 Portola Plaza, Los Angeles, CA 90095}
\email{rebrova@math.ucla.edu, deanna@math.ucla.edu}

\maketitle

\begin{abstract}
The Kaczmarz algorithm is one of the most popular methods for solving large-scale over-determined linear systems due to its simplicity and computational efficiency. This method can be viewed as a special instance of a more general class of sketch and project methods. Recently, a block Gaussian version was proposed that uses a block Gaussian sketch, enjoying the regularization properties of Gaussian sketching, combined with the acceleration of the block variants. Theoretical analysis was only provided for the non-block version of the Gaussian sketch method. 

Here, we provide theoretical guarantees for the block Gaussian Kaczmarz method, proving a number of convergence results showing convergence to the solution exponentially fast in expectation. On the flip side, with this theory and extensive experimental support, we observe that the numerical complexity of each iteration typically makes this method inferior to other iterative projection methods. We highlight only one setting in which it may be advantageous, namely when the regularizing effect is used to reduce variance in the iterates under certain noise models and convergence for some particular matrix constructions.

\end{abstract}

\section{Introduction}
The main goal of this paper is to study a proposed block Gaussian Kaczmarz variant, both theoretically and experimentally, for solving large-scale linear systems. We start with a brief description of the relevant members of the rich family of Kaczmarz iterative methods.

\subsection{Kaczmarz and randomized Kaczmarz algorithms}\label{intro1}
The Kaczmarz method \cite{Kaz} is an iterative method for solving large-scale (typically highly over-determined) linear systems. Being simple, efficient and well-adapted to large amounts of data (due to its iterative nature), the Kaczmarz method is widely used in a variety of applications, from image reconstruction to signal processing \cite{SesSta,Nat,Feinco,herman1993algebraic}. Given a consistent  system (we will consider inconsistent systems later) of linear equations of the form
\begin{equation}\label{main_system}
Ax = b,
\end{equation}
the original Kaczmarz method starts with some initial guess $x_0  \in \R^n$, and then iteratively projects the previous approximation $x_k$ onto the solution space of the next equation in the system. Namely, if $A_1, \ldots, A_m \in \R^n$ are the row vectors of $A$, then the $k$-th step of the algorithm is given by:
\begin{equation}\label{standardRK}
x_{k} = x_{k-1}+ \frac{b_i - A_i^T x_{k-1}}{\|A_i\|^2} A_i,
\end{equation}
where $b = (b_1, \ldots, b_n) \in \R^m$ is the right hand side of the system, $i =  k\mod m$ and $x_{k-1} \in \R^n$ is the approximation of a solution $x_*$ obtained in the previous step. The process continues until it triggers an appropriate convergence criterion.

To provide theoretical guarantees for the convergence of the method, Strohmer and Vershynin \cite{StrVer} proposed to choose the next row $A_i$ at random with probability proportional to the $L_2$ norm of the row $A_i$. The authors have shown that this \emph{randomized Kaczmarz algorithm} is guaranteed to converge exponentially in expectation, namely,
\begin{equation}\label{standard_convergence_rate}
\E \|x_k - x_*\|^2_2 \le \left(1 - \frac{1}{R}\right)^k \|x_0 - x_*\|^2_2,
\end{equation}
where $x_*$ is the solution of the system \eqref{main_system} and $R$ is a constant depending only on the matrix $A$, namely, $R = \|A\|^2_F/\sigma_{min}^2(A)$. 

There is a variety of extensions and refinements of the first randomized Kaczmarz method. They include specializations of the method to some other classes of problems (like solving inconsistent linear systems \cite{Nee}, phase retrieval \cite{TanVer}, stochastic gradient descent \cite{NeeWarSre}, etc);  improvements in the weighting of the rows (from the one based on $\|A_i\|_2$ to some better ``optimal" probabilities, see, e.g., \cite{GowRic}), and new hybrid methods based on Kaczmarz \cite{LoeHadNee}. We omit a detailed discussion of such related work but refer the reader to those mentioned and others therein.

\subsection{Notations} Here and further, we denote by $\sigma_{min}(A)$ and $\sigma_{max}(A)$ the smallest and largest singular values of the matrix $A$ (that is, eigenvalues of the matrix $\sqrt{A^TA}$). Then,  $\|A\|_F :=\sqrt{trace(A^TA)}$ (Frobenius, or Hilbert-Shmidt, norm of the matrix) and $\|A\| := \sup_{\|x\|_2 = 1} \|Ax\|_2$ (operator norm of the matrix). Moreover, we always assume that the matrix $A$ has full column rank, so that $\sigma_{min}(A) > 0$ and the convergence rate is non-trivial.

\subsection{Organization and Contribution} 

The remainder of the paper is organized as follows. Next, in Section \ref{intro2} we describe the block Gaussian Kaczmarz method, which can be viewed as a block Kaczmarz method (that utilizes many rows for each projection) preprocessed in each iteration via a random Gaussian matrix (the \textit{sketch}). We state our main results for its convergence as well as some extensions and implementation variations in Subsections \ref{intro3} and \ref{intro4}. To the best of our knowledge, our results are the first theoretical guarantees for the block Gaussian sketch variant of the Kaczmarz method recently described in \cite{GowRic}. We present the proofs of these results in Section \ref{sec:proofs}. We present experimental results in Section \ref{experiments} that showcase four main observations. First, for consistent systems, larger block sizes tend to yield faster convergence; in fact a large block size of $s=n$ is computationally optimal (when computationally possible) and results in convergence in a single step. Secondly, however, it seems that the benefit of the block Gaussian Kaczmarz method typically stems from the fact it is a block variant -- and not that it uses Gaussian sketching. Thus, the (non-Gaussian) block Kaczmarz method would be typically preferred since it doesn't require computationally heavy sketching. We do however showcase a carefully constructed matrix model for which Gaussian sketching does become advantageous over its non-Gaussian counterparts. Third, we demonstrate that the block Gaussian Kaczmarz (BGK) method can be implemented using a finite collection of sketches, and lastly we show that for inconsistent systems, the BGK method offers advantages in terms of variance reduction in the solution error.

\section{Block Gaussian Kaczmarz} \label{intro2}

The extension that will be of our major interest throughout the paper is a version of the Kaczmarz algorithm that uses blocks of the rows for iterative projections (rather than individual rows). Namely, the $(k+1)$-st iteration has the form
\begin{equation}\label{blockRK}
x_{k+1} = x_k + (A_{\tau})^\dagger (b_{\tau} - A_{\tau} x_k),
\end{equation}
where $A_{\tau}$ and $b_{\tau}$ denote the restriction onto the (row) indices from the subset $\tau \subset \{1, \ldots, m\}$ and $(A_{\tau})^\dagger$ denotes the Moore-Penrose inverse of the matrix $A_{\tau}$.

This framework was initially proposed by Elfving \cite{Elf}, and its randomized version was presented and analyzed in the paper by Needell and Tropp \cite{NeeTro}. In the randomized version, the matrix $A$ is split into several row blocks, and at each iteration one of these blocks is chosen uniformly at random with replacement. The authors prove the exponential convergence of the method with a strong convergence constant,
\begin{equation}\label{block_kaz}
\E \|x_k - x_*\|^2_2 \le \left(1 - \frac{\sigma_{min}^2(A)}{C \|A\|^2 \log(m + 1)}\right)^k \|x_0 - x_*\|^2_2,
\end{equation}
if we manage to choose a ``good" row block partition, and under an assumption that all the rows are standardized, namely, $\|A_i\|_2 = 1$; see \cite{NeeTro} for details.

Although the existence of this ``good" partition is theoretically guaranteed, it is not always straightforward how to find such partition (e.g., if $A$ has coherent rows). However, experimental evidence shows that the block Kaczmarz method still exhibits fast convergence  even in these cases. This observation is especially interesting since coherent matrices are precisely the examples for which standard randomized Kaczmarz does not perform well (as projections at each step follow roughly the same direction, which might not be a direction toward the true solution $x_*$). Some theoretical analysis of this improvement for blocks of size two is available in \cite{NeeWar}. 

\bigskip
A unified view on both regular and block Kaczmarz methods, along with many other randomized iterative solvers, was proposed by Gower and Richt\'arik in \cite{GowRic}. The main idea of their \emph{sketch-and-project} framework is the following. One can observe that the random selection of a row (or a row block) can be represented as a \textit{sketch}, that is, left multiplication by a (random or deterministic) vector (or a matrix). This results in a preprocessing of every iteration of the method, which is represented by a projection onto the image of the sketch. Sketching preprocessing is a valuable procedure on more that just the Kaczmarz method (as proposed in \cite{GowRic,gower2015stochastic,loizou2017momentum,loizou2019convergence}). 
 
Thus, in the case of the Kaczmarz method, the iteration can be written as
\begin{equation}\label{iter}
x_{k+1} = (\Id - (S^TA)^{\dagger}S^TA)x_k + (S^TA)^{\dagger} S^Tb,
\end{equation}
where $S$ is the sketch matrix, taken from some (typically random) matrix model at each step. Although $S$ will be drawn in each iteration (later we discuss variants to this implementation), we omit an iteration index for notational simplicity. For brevity, we will denote $A_S :=S^TA$.

Clearly, in the case of block Kaczmarz \eqref{blockRK}, sketch matrices $S$ are just shifted identity matrices tabbed by zeros for the correct size ($m$ by block size). Standard Kaczmarz \eqref{standardRK} can be, of course, considered as a special case of a block method with block size one.

The sketch-and-project viewpoint suggests a natural idea to generalize the methods by adopting some other sketch matrices $S$. Gower and Richt\'arik propose to take $S$ to be a standard Gaussian matrix with independent entries. The authors show exponential convergence with the standard rate \eqref{standard_convergence_rate} with $R = 2\|A\|^2_F/\pi \sigma_{min}^2(A)$ in the one dimensional case (when $S$ is a Gaussian vector in $\R^m$). To our best knowledge, theoretical analysis for the general block case has not previously been established, nor have extensive experimental tests been performed. We refer to \eqref{iter} with Gaussian matrix sketches $S$ as the \textit{block Gaussian Kaczmarz} or \textit{BGK} method.

\subsection{Main results} \label{intro3}
Here, we continue the study of  Gaussian sketch matrices in application to the Kaczmarz methods; developing the theoretical analysis beyond the vectorized version to the general block Gaussian Kaczmarz method. The proofs of these results are presented in Section \ref{sec:proofs}. Our first convergence result is as follows:

\begin{theorem}\label{theor1}
Suppose $A$ is a $m \times n$ matrix with full column rank ($m \ge n$), such that its condition number $\kappa^2(A) := \sigma^2_{max}(A)/\sigma^2_{min}(A) \le e^{m/4}/3$, and let $x_*$ be a solution of the system $Ax = b$. For any initial estimate $x_0$, the BGK method (iteration \eqref{iter} with $S$ being an $m \times s$ random matrix with i.i.d. standard normal entries) produces a sequence $\{x_k, k \ge 0\}$ of iterates that satisfy
\begin{align}\label{c_rate_1}
\E\|x_{k} - x_*\|_2^2 \le \left( 1 - \frac{s}{15m \kappa^2(A)}\right)^k \|x_0- x_*\|_2^2.
\end{align}
\end{theorem}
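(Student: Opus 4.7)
The plan is to reduce the claim to a one-step contraction $\E[\|e_{k+1}\|_2^2 \mid e_k] \le (1-\rho)\|e_k\|_2^2$ with $\rho = s/(15m\kappa^2(A))$ and then iterate via the tower property. Writing $e_k := x_k-x_*$, consistency ($Ax_* = b$) and~\eqref{iter} give $e_{k+1} = (I - A_S^\dagger A_S)e_k$. Since $A_S^\dagger A_S$ is the orthogonal projection $P$ onto $\mathrm{range}(A^T S) \subseteq \R^n$, the Pythagorean identity yields $\|e_{k+1}\|_2^2 = \|e_k\|_2^2 - \|P e_k\|_2^2$, so the task is to lower bound $\E\|P e_k\|_2^2$ by the desired fraction of $\|e_k\|_2^2$.

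Next I would produce a deterministic-in-$e$ lower bound on $\|Pe\|_2^2$ in terms of Gaussian quantities. Set $v := Ae$ and $u := S^T v$; in the regime $s \le n$ (the case $s > n$ gives $P = I$ almost surely and one-step convergence), $P = A^T S (S^T A A^T S)^{-1} S^T A$ on the a.s.\ event that $A_S$ has full row rank. The operator inequality $AA^T \preceq \sigma_{\max}^2(A) I_m$ yields $S^T A A^T S \preceq \sigma_{\max}^2(A)\, S^T S$, and inverting gives $(S^T A A^T S)^{-1} \succeq \sigma_{\max}^{-2}(A)(S^T S)^{-1}$. Combining, $\|Pe\|_2^2 \ge u^T(S^T A A^T S)^{-1}u \ge \|u\|_2^2/(\sigma_{\max}^2(A)\|S\|^2) = \|S^T Ae\|_2^2/(\sigma_{\max}^2(A)\|S\|^2)$.

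The main work is Gaussian concentration. A Davidson--Szarek bound says that for $s \le m$ there is an absolute constant $C$ with $\|S\|^2 \le Cm$ on an event of probability at least $1 - e^{-m/4}$. Conditionally on $e_k$ (so that $v$ is fixed), $S^T v \sim N(0, \|v\|_2^2 I_s)$, so $\|S^T v\|_2^2 / \|v\|_2^2$ is $\chi_s^2$-distributed, and the one-sided Laurent--Massart bound gives $\|S^T v\|_2^2 \ge \tfrac{s}{2}\|v\|_2^2$ with probability at least $1 - e^{-cs}$. On the intersection $E$ of the two good events, combining with $\|v\|_2^2 \ge \sigma_{\min}^2(A)\|e_k\|_2^2$ gives $\|P e_k\|_2^2 \ge (s/(2Cm\kappa^2(A)))\|e_k\|_2^2$; since $\|Pe_k\|_2^2 \ge 0$ always, $\E\|Pe_k\|_2^2 \ge \P(E)\cdot (s/(2Cm\kappa^2(A)))\|e_k\|_2^2$.

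The final step is to convert this into the clean constant $1/15$. This is where the hypothesis $\kappa^2(A) \le e^{m/4}/3$ enters: it guarantees that the exponentially small tail $e^{-m/4}$ from $\|S\|$ concentration is dominated by $1/\kappa^2(A)$, ensuring $\P(E)$ stays above a constant large enough to absorb $2C$ into $15$; iterating the resulting single-step contraction produces~\eqref{c_rate_1}. The most delicate point will be precisely this bookkeeping: the events $\{\|S\|^2 \le Cm\}$ and $\{\|S^T v\|_2^2 \ge (s/2)\|v\|_2^2\}$ are functions of the same $S$ and thus correlated, so one has to simultaneously control them and choose the constants in the concentration bounds carefully enough that, after taking expectations and handling the trivial bound $\|Pe\|_2^2 \le \|e\|_2^2$ on $E^c$, the residual factor really does fit inside $1/15$.
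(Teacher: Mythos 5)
Your skeleton is the same as the paper's: reduce to a one-step contraction via $e_{k+1}=(I-A_S^\dagger A_S)e_k$ and the Pythagorean identity for the orthogonal projector, lower bound $\|Pe\|_2^2$ by $\|S^TAe\|_2^2/(\sigma_{max}^2(A)\|S\|^2)$ (the paper writes this as $\sigma_{min}^2(A_S^\dagger)\,\|A_S u\|_2^2$ and conditions on $\{\|A_S\|\le 3\sqrt{m}\|A\|\}$ via Lemma~\ref{sup_norm}), then control the Gaussian quantities and iterate by the tower property. The one place you genuinely deviate is the treatment of $\|S^TAe\|_2^2$, and that is exactly where the stated constant is lost.

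The paper does \emph{not} use a lower-tail bound for the $\chi^2_s$ variable $\|S^Tv\|_2^2/\|v\|_2^2$; it computes its expectation exactly, $\E\|S^TAu\|_2^2=s\|Au\|_2^2$, and uses Lemma~\ref{exp_cond} to show that conditioning on the high-probability norm event changes this expectation only by an \emph{additive} term $e^{-m/4}\|A\|^2$. The hypothesis $\kappa^2(A)\le e^{m/4}/3$ is then used to absorb that additive exponential correction (turning $10$ into $15$ in Proposition~\ref{main1}), not to prop up $\P(E)$. Your route instead pays a \emph{multiplicative} constant: you need $\theta\cdot\P\bigl(\chi^2_s\ge\theta s\bigr)$ in place of $\E[\chi^2_s]/s=1$, and for $s=1$ one has $\sup_{\theta>0}\theta\,\P(\chi^2_1\ge\theta)\approx 1/3$ (your Laurent--Massart event $\{\chi_1^2\ge 1/2\}$ has probability only about $0.48$, and the failure probability $e^{-cs}$ is not small for small $s$, so the condition-number hypothesis does not help here). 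Combined with the factor $C=(2+t)^2\ge 7$ coming from the Davidson--Szarek event at confidence $1-e^{-m/4}$, your one-step rate is at best of order $s/(22\,m\kappa^2(A))$ for $s=1$, which does not fit inside $s/(15\,m\kappa^2(A))$. So the argument as written proves exponential convergence with rate $c\,s/(m\kappa^2(A))$ for some absolute $c>0$, but not the theorem with the constant $1/15$ uniformly in $s\ge 1$. To close the gap you should replace the $\chi^2$ tail bound by the exact expectation together with a conditional-versus-unconditional comparison of the type in Lemma~\ref{exp_cond}; that also resolves the correlation between the two events that you flag, since only the single event $\{\|S\|\le 3\sqrt m\}$ then needs to be conditioned on.
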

\begin{remark}
Note that the condition $\kappa^2(A) := \sigma^2_{max}(A)/\sigma^2_{min}(A) \le e^{m/4}/3$ naturally holds for many standard classes of matrices. For example, random Gaussian matrices have condition numbers $\kappa(A) \sim m$ (\cite{edelman1989eigenvalues,Sh1}). The same holds for the broader class of random matrices with i.i.d. elements having sub-gaussian tails (\cite{LittleOffInv}). Moreover, heavy-tailed models, when the matrix entries have only two finite moments, still have polynomial condition numbers $\kappa(A)$ with high probability \cite{RebTik}.
\end{remark}
An alternative (although very similar) estimate can be obtained for all matrices, without a condition number assumption, in trade of the absolute constants:

\begin{theorem}\label{theor2}
Suppose $A$ is a $m \times n$ matrix with full column rank ($m \ge n$) and let $x_*$ be a solution of the system $Ax = b$. For any initial estimate $x_0$, the BGK method (iteration \eqref{iter} with $S$ being an $m \times s$ random matrix with i.i.d. standard normal entries) produces a sequence $\{x_k, k \ge 0\}$ of iterates that satisfy
\begin{align}\label{c_rate_2}
\E\|x_{k} - x_*\|_2^2 \le \left( 1 - \frac{1}{80}\left[\frac{\sqrt{s}\sigma_{min}(A)}{\sqrt{s} \|A\| + \|A\|_F}\right]^2\right)^k \|x_0- x_*\|_2^2.
\end{align}
Here, $C > 0$ is an absolute constant.
\end{theorem}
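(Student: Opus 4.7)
The plan is to apply the standard one-step recursion for the sketch-and-project iteration. Since $Ax_*=b$, we have $S^{\tran} b = S^{\tran} A x_* = A_S x_*$, so the update rule~\eqref{iter} gives $x_{k+1} - x_* = (\Id - A_S^\dagger A_S)(x_k - x_*)$. Because $\Id - A_S^\dagger A_S$ is the orthogonal projection onto $\ker A_S$, this yields
\[
\|x_{k+1} - x_*\|_2^2 = \|x_k - x_*\|_2^2 - \|A_S^\dagger A_S(x_k - x_*)\|_2^2.
\]
Taking conditional expectations and iterating, the theorem reduces to establishing the one-step contraction
\[
\E \|A_S^\dagger A_S y\|_2^2 \;\ge\; \frac{1}{80}\left[\frac{\sqrt{s}\,\sigma_{min}(A)}{\sqrt{s}\,\|A\|+\|A\|_F}\right]^2 \|y\|_2^2
\]
for an arbitrary fixed $y \in \R^n$.

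To lower-bound $\|A_S^\dagger A_S y\|_2^2$ pointwise, I would use the identity $A_S^\dagger A_S = A_S^{\tran}(A_S A_S^{\tran})^{-1} A_S$ (valid almost surely when $s\le n$, which is the only nontrivial regime---when $s>n$ the projection $A_S^\dagger A_S$ equals $\Id_n$ a.s.\ and the iteration solves the system in a single step) together with the spectral estimate $(A_S A_S^{\tran})^{-1} \succeq \|A_S\|^{-2}\,\Id_s$ to obtain
\[
\|A_S^\dagger A_S y\|_2^2 \;\ge\; \frac{\|S^{\tran}Ay\|_2^2}{\|S^{\tran}A\|^2}.
\]
Writing $X := \|S^{\tran}Ay\|_2^2$ and $Y := \|S^{\tran}A\|^2$, two applications of the Cauchy--Schwarz inequality---first $\E X = \E\bigl[\sqrt{X/Y}\cdot\sqrt{XY}\bigr] \le \sqrt{\E(X/Y)\,\E(XY)}$, and then $\E(XY) \le \sqrt{\E X^2\cdot \E Y^2}$---give
\[
\E\frac{X}{Y} \;\ge\; \frac{(\E X)^2}{\sqrt{\E X^2\cdot \E Y^2}},
\]
reducing the problem to estimating four scalar Gaussian moments.

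Two of them are elementary: since $S^{\tran}Ay \sim \NN(0,\|Ay\|_2^2\,\Id_s)$, one has $\E X = s\|Ay\|_2^2$ and $\E X^2 = (s^2+2s)\|Ay\|_2^4 \le 3s^2\|Ay\|_2^4$. The delicate piece, and the main obstacle I expect to encounter, is the fourth moment $\E Y^2 = \E\|S^{\tran}A\|^4$. I would control it by combining two standard Gaussian estimates: Chevet's inequality, which yields the mean bound $\E\|S^{\tran}A\| \le \sqrt{s}\,\|A\|+\|A\|_F$, and Gaussian Lipschitz concentration applied to the $\|A\|$-Lipschitz map $S \mapsto \|S^{\tran}A\|$, which upgrades this mean control into a fourth-moment bound of the form $\E\|S^{\tran}A\|^4 \le C\,(\sqrt{s}\,\|A\|+\|A\|_F)^4$ for an absolute constant $C$.

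Assembling the pieces and using $\|Ay\|_2^2 \ge \sigma_{min}^2(A)\|y\|_2^2$ produces
\[
\E\|A_S^\dagger A_S y\|_2^2 \;\ge\; \frac{s\,\sigma_{min}^2(A)}{\sqrt{3C}\,(\sqrt{s}\,\|A\|+\|A\|_F)^2}\,\|y\|_2^2,
\]
and the theorem follows provided the absolute constant $C$ above satisfies $\sqrt{3C}\le 80$, which is a matter of carefully tracking constants in the Chevet/Gaussian-concentration step. Apart from that fourth-moment bound, the argument is purely linear-algebraic and insensitive to the sketch being Gaussian---it is only the estimate on $\E\|S^{\tran}A\|^4$ that uses the specific structure of $S$, which is also why it is natural to view this as the hard part of the proof.
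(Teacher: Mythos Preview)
Your argument is correct and yields the stated bound with room to spare on the constant. The reduction to the one-step lower bound on $\E\|A_S^\dagger A_S y\|_2^2$, the pointwise inequality $\|A_S^\dagger A_S y\|_2^2 \ge \|S^{\tran}Ay\|_2^2/\|S^{\tran}A\|^2$, and the double Cauchy--Schwarz decoupling are all valid; the fourth-moment bound $\E\|S^{\tran}A\|^4 \le C(\sqrt{s}\,\|A\|+\|A\|_F)^4$ indeed follows from Chevet plus Gaussian Lipschitz concentration (the map $S\mapsto\|S^{\tran}A\|$ is $\|A\|$-Lipschitz in the Frobenius norm, and $\|A\|\le\|A\|_F\le\sqrt{s}\,\|A\|+\|A\|_F$), with an explicit $C$ well under $80^2/3$.

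The paper's own proof shares your starting point---the same one-step reduction and essentially the same pointwise lower bound---and also hinges on Chevet's inequality to control $\|S^{\tran}A\|$. The difference lies in how the ratio $\E[\,\|A_S u\|_2^2/\|A_S\|^2\,]$ is handled. The paper conditions on the event $\{\|A_S u\|_2^2\ge\gamma^2\}$, bounds its probability from below by a chi-squared small-ball estimate obtained from Cram\'er's theorem, and then applies Jensen's inequality to pass from $\E[\|A_S\|^{-2}\mid\cdot\,]$ to $(\E[\|A_S\|\mid\cdot\,])^{-2}$; only the \emph{first} moment of $\|S^{\tran}A\|$ is needed. Your route replaces this conditioning-plus-Cram\'er-plus-Jensen mechanism with a pure moment calculation: two Cauchy--Schwarz steps, exact chi-squared moments for the numerator, and a fourth-moment bound for the denominator that requires the additional Gaussian-concentration input. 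In exchange for that extra ingredient you avoid the conditioning argument entirely, which makes your proof somewhat more direct; both approaches deliver the same rate with comparable absolute constants.
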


We note that Theorem~\ref{theor2} is usually stronger than Theorem~\ref{theor1}; since $\|A\|_F \le m \|A\|$ for any matrix, Theorem~\ref{theor1} might give tighter results than Theorem~\ref{theor2} for some matrices $A$ with $\|A\|_F \approx m \|A\|$, but its advantage will be at most by a constant multiple in the convergence rate $R$.

We also note that in the case $s = 1$ we recover the convergence rate $c \sigma_{min}^2(A)/\|A\|_F^2$ that was proved earlier in \cite{GowRic}. We also obtain theoretical evidence for the observed convergence speed-up with bigger sketch sizes $s > 1$. Indeed, from Theorems~\ref{theor1} and \ref{theor2} we can see that the expected gain from using block sketches is linear in the size of the block $s$ (if we look at the per-iteration gain). The same advantage is experimentally observed for the (non-Gaussian) block Kaczmarz methods (see e.g. \cite{NeeTro,NeeWar} and below in Section \ref{experiments}), however, prior theoretical analysis, such as the rate in \eqref{block_kaz}, did not allow us to trace the dependence on $s$.
Of course, for larger $s$, both the sketching step (computing $S^TA$) and the inversion step (computing $(S^TA)^{\dagger}$) become slower. In Section~\ref{experiments} we study this tradeoff numerically. 

\subsection{Sampling from a finite collection} \label{intro4}
As presented, the BGK method \eqref{blockRK} requires a new random matrix $S$ to be drawn in each iteration. 
A natural question is whether we actually need to generate a new Gaussian matrix at each step. In some settings, if memory is not an issue, we may rather have a finite set of matrices (potentially, generated in advance) and sample from it. For comparison, in the standard Kaczmarz methods viewed as sketched versions, we have only a finite set of  sketch matrices (and the cardinality of this set is the number of rows of the matrix divided by the number of blocks). We prove that in the Gaussian case we can be satisfied with a finite collection of sketches as well. We call this variant of the method the \textit{finite} block Gaussian Kaczmarz method. We establish the following result in this setting.

\begin{theorem}[Finite collection]\label{theor3} Suppose $A$ is a $m \times n$ matrix with full column rank ($m \ge n$) and let $x_*$ be a solution of the system $Ax = b$. Let $ N$ be such that $64c m^2 \log m \le N \le \exp(m/3)$ (for some $c > 3$). Let $\mathcal{S} = \{S^{(1)}, \ldots, S^{(N)}\}$ be a random set of $m \times s$ random matrices with i.i.d. standard normal entries. Then, with probability at least $1 - 1.1m^{3-c}$, for any initial estimate $x_0$, finite BGK method (iteration \eqref{iter} with $S$ being chosen randomly with replacement\footnote{Sampling is called with replacement when a unit selected at random from the collection is returned to the collection and then a second element is selected at random. So, the same element may be selected more than once.} from the set $\mathcal{S}$) produces a sequence $\{x_k, k \ge 0\}$ of iterates that satisfy
\begin{align*}
\E\|x_{k} - x_*\|_2^2 \le \left( 1 - \frac{s}{36m \kappa^2(A)}\right)^k \|x_0- x_*\|_2^2.
\end{align*}
\end{theorem}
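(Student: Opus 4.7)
The plan is to condition on the random collection $\mathcal{S}$ and reduce the theorem to a uniform spectral lower bound on the empirical average $\bar P := \frac{1}{N}\sum_{i=1}^N P_{S^{(i)}}$, where $P_S := (S^T A)^{\dagger}(S^T A)$ is the orthogonal projection onto the row space of $A_S$. Conditional on $\mathcal{S}$, the finite BGK update reads $x_{k+1} - x_* = (\Id - P_{S_k})(x_k - x_*)$ with $S_k$ drawn uniformly from $\mathcal{S}$ independently across $k$. Since $\Id - P_{S_k}$ is an orthogonal projection, one step gives
\[
\E_{S_k}\|x_{k+1} - x_*\|_2^2 = \langle x_k - x_*,\,(\Id - \bar P)(x_k - x_*)\rangle \le (1 - \lambda_{\min}(\bar P))\,\|x_k - x_*\|_2^2,
\]
and iterating reduces the task to showing that with probability at least $1 - 1.1 m^{3-c}$ over the draw of $\mathcal{S}$ we have $\lambda_{\min}(\bar P) \ge s/(36 m \kappa^2(A))$.

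For this spectral estimate I would invoke the operator-level statement equivalent to Theorem~\ref{theor1}, namely $\E_S P_S \succeq \mu\,\Id$ with $\mu := s/(15 m \kappa^2(A))$, which is just the spectral reformulation of the single-step convergence bound therein. Then $\bar P$ is an empirical mean of $N$ i.i.d.\ PSD matrices dominated by $\Id$, so I would apply Tropp's matrix Chernoff inequality with deviation parameter $\delta = 7/12$, chosen so that $(1-\delta)\mu = s/(36 m\kappa^2(A))$. This yields
\[
\Pr\{\lambda_{\min}(\bar P) \le s/(36 m \kappa^2(A))\} \le n\,\exp(-c_0\, N\, \mu)
\]
for an absolute constant $c_0 > 0$; substituting the hypothesis $N \ge 64 c m^2 \log m$ and the lower bound on $\mu$ would bring the right-hand side below $m^{3-c}$, with the $0.1 m^{3-c}$ slack absorbing a union bound over auxiliary rare events (for instance, the event that some $A_{S^{(i)}}$ has an atypically small singular value), each of probability $e^{-\Omega(m)}$ for a single Gaussian draw. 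The upper hypothesis $N \le \exp(m/3)$ is precisely what makes the union bound $N \cdot e^{-\Omega(m)}$ negligible.

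The main technical obstacle I anticipate is reconciling the $\kappa$-dependent sample complexity that matrix Chernoff literally produces, roughly $N \gtrsim m\kappa^2(A)\log m/s$, with the $\kappa$-free hypothesis $N \ge 64 c m^2 \log m$. Bridging the gap requires either carrying through the condition $\kappa^2(A) \le e^{m/4}/3$ inherited from Theorem~\ref{theor1} (implicit in the stated rate being meaningful, and forcing $m\kappa^2/s \lesssim m^2$), or switching to matrix Bernstein applied to the recentered sum and exploiting that $\E(P_S - \E P_S)^2 \preceq \E P_S \preceq \Id$ has variance proxy bounded by $1$, which naturally produces the $m^2 \log m$ budget. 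The remaining work is the careful bookkeeping of the absolute constants $c_0$, $n \le m$, and the factor $64$ to land exactly at the stated probability $1.1 m^{3-c}$.
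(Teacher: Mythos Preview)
Your reduction to the spectral bound $\lambda_{\min}(\bar P)\ge s/(36m\kappa^2(A))$ is correct, but your route to it via matrix concentration on the projections $P_S$ does not close, and neither of your two fixes works. Matrix Chernoff gives, as you say, a requirement $N\gtrsim m\kappa^2(A)\log m/s$; the condition $\kappa^2(A)\le e^{m/4}/3$ from Theorem~\ref{theor1} does \emph{not} force $m\kappa^2/s\lesssim m^2$, since $e^{m/4}$ is exponentially larger than $m$. Your matrix Bernstein alternative is worse, not better: with variance proxy $\sigma^2\le 1$ and range $R\le 1$, the lower-tail bound at deviation $t\asymp\mu$ is $n\exp(-cN\mu^2)$, so you need $N\gtrsim\mu^{-2}\log m\asymp m^2\kappa^4(A)\log m/s^2$. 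A bounded variance does not help when the target deviation is itself tiny and $\kappa$-dependent. The obstacle you flagged is therefore fatal to the approach, not a bookkeeping issue.

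The paper avoids the $\kappa$-coupling by concentrating quantities that do not involve $A$ at all. It calls a collection ``good'' when (i) every $\|S^{(k)}\|\le 3\sqrt m$, (ii) the empirical cross-moments $\tfrac1N\sum_k S^{(k)}_{ji}S^{(k)}_{ri}$ are at most $1/(4m)$ for $j\ne r$, and (iii) each empirical second moment $\tfrac1N\sum_k (S^{(k)}_{ji})^2$ lies in $[1/2,3/2]$. These are scalar sub-exponential concentration statements on Gaussian entries, with no $A$-dependence; the $m^2\log m$ budget arises from demanding deviation $N/(4m)$ in (ii) together with a union bound over $O(m^2s)$ index pairs, and the hypothesis $N\le e^{m/3}$ handles the union bound for (i). Given a good collection, one computes \emph{deterministically} that $\E_{S\sim\mathcal S}\|S^TAu\|_2^2\ge (s/4)\|Au\|_2^2$ for every $u$ (expand the quadratic form and use (ii)--(iii)), while (i) forces $\|(S^TA)^\dagger\|\le 1/(3\sqrt m\,\sigma_{\min}(A))$ for every $S\in\mathcal S$; combining gives $\E\|A_S^\dagger A_S u\|_2^2\ge s/(36m\kappa^2(A))$. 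The $\kappa$-dependence enters only in this final deterministic step, never in the concentration. The missing idea in your proposal is precisely this decoupling: concentrate the empirical second-moment structure of the sketches themselves, then feed it into an $A$-dependent but deterministic argument.
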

Thus, the convergence rate is as good as in the case of  taking a new sketch at each iteration (Theorem~\ref{theor2}). However, the size of a pre-selected set $\mathcal{S}$  required by Theorem~\ref{theor3} ($N \gg m^2 \log m$) is likely too big to be practical. Our experiments show that in practice the size $N \sim m/s$ (number of rows divided by the block size, like in the regular block Kaczmarz case) is enough to demonstrate the same convergence (see Section \ref{experiments}.

\bigskip

\subsection{Remarks on the inconsistent case}
So far we considered consistent systems with a unique solution $x_*$ such that $Ax_* = b$. However, for inconsistent systems one may wish to solve the overdetermined least-squares problem
\begin{equation}\label{minimization_problem}
  \text{ minimize}_x \|Ax - b\|_2^2
 \end{equation}
for the unique minimizer $x_*$. Comparing to the consistent case, the noise in the system $e := Ax_* - b$ is known to incur a second ``error" term (the so-called \textit{convergence horizon}), so that
\begin{equation}\label{noisy_conversgence_rate}
\E \|x_k - x_*\|_2^2 \le \beta \|x_k - x_*\|_2^2 + \varphi,
\end{equation}
where $\beta \in (0,1)$ and  $\varphi = \varphi(e)$ does not decrease with iteration and gives the radius of the ball that will eventually contain the final iterates \cite{NeeTro,Nee}. The exponential convergence rate $\beta$ is the same as that of the corresponding method in the consistent case, and the convergence horizon $\varphi(e) = \|e\|^2_2/\sigma_{min}^2(A)$ was established for the block Kaczmarz method in \cite{NeeTro}, improving the weaker bound $\varphi(e) = n\|e\|^2_{\infty}/\sigma_{min}^2(A)$ known for standard Kaczmarz \cite{Nee}. 

 In the BGK case, we can also prove the expected convergence rate in the form \eqref{noisy_conversgence_rate}. 
 The following result is proved in Section~\ref{proof_with_error}.
\begin{theorem}[Inconsistent case, finite sample collection]\label{theor4}
Suppose $A$ is a $m \times n$ matrix with full column rank ($m \ge n$), let $x_*$ be a solution of the quadratic minimization problem~\eqref{minimization_problem}, and $e := Ax_* - b$. Let $ N$ be such that $64c m^2 \log m \le N \le \exp((\sqrt{n} - \sqrt{s})^2/16)$ (for some $c > 3$). Let $\mathcal{S} = \{S^{(1)}, \ldots, S^{(N)}\}$ be a set of $m \times s$ random matrices with i.i.d. standard normal entries. Then, with probability at least $1 - 1.1m^{3-c} - e^{-(\sqrt{n} - \sqrt{s})^2/16}$, for any initial estimate $x_0$, the finite BGK method (iteration \eqref{iter} with $S$ being chosen randomly with replacement from the set $\mathcal{S}$) produces a sequence $\{x_k, k \ge 0\}$ of iterates that satisfy
\begin{equation}\label{c_rate_noisy_fin}
\E\|x_{k} - x_*\|_2^2 
\le \left( 1 -  \frac{s}{36m \kappa^2(A)}\right)^k \|x_0- x_*\|_2^2 
+ \frac{Cm\kappa^2(A)}{\sigma_{min}^2(A)} \cdot \frac{\|e\|^2_2}{(\sqrt{n} - \sqrt{s})^2},
\end{equation}
where $C > 0$ is an absolute constant.
\end{theorem}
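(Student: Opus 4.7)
The plan is to mirror the proof strategy for the consistent case (Theorem~\ref{theor3}) and then separately account for the extra error introduced by the noise $e$. Substituting $b = A x_* - e$ into the iteration \eqref{iter} yields the one-step recursion
\[
x_{k+1} - x_* = (\Id - A_S^{\dagger} A_S)(x_k - x_*) - A_S^{\dagger} S^\tran e.
\]
Because $A_S^{\dagger} A_S$ is the orthogonal projection onto the row space of $A_S$, the noise contribution $A_S^{\dagger} S^\tran e$ lies in that row space while $(\Id - A_S^{\dagger} A_S)(x_k - x_*)$ is orthogonal to it, so for every realization of $S$ one has the pointwise Pythagorean identity
\[
\|x_{k+1} - x_*\|_2^2 = \|(\Id - A_S^{\dagger} A_S)(x_k - x_*)\|_2^2 + \|A_S^{\dagger} S^\tran e\|_2^2.
\]
This clean decomposition is the engine of the proof: it allows Theorem~\ref{theor3} to be invoked for the signal term as a black box, without any cross-term leakage.

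For the signal term, on the event $\mathcal{E}_1$ (of failure probability at most $1.1 m^{3-c}$) that the finite collection $\mathcal{S}$ satisfies the spectral bound supplied by Theorem~\ref{theor3}, averaging over $S \sim \mathcal{S}$ yields the contraction
\[
\E_{S \sim \mathcal{S}} \|(\Id - A_S^{\dagger} A_S)(x_k - x_*)\|_2^2 \le \left(1 - \tfrac{s}{36 m \kappa^2(A)}\right)\|x_k - x_*\|_2^2.
\]
For the noise term I would establish a uniform singular-value lower bound $\sigma_{\min}(S^\tran A) \ge \tfrac12(\sqrt{n}-\sqrt{s})\sigma_{\min}(A)$ for every $S \in \mathcal{S}$. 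Using the SVD $A = U \Sigma V^\tran$ and rotational invariance of the Gaussian distribution, $S^\tran A = G \Sigma V^\tran$ with $G$ an $s \times n$ matrix whose entries are i.i.d.\ $\NN(0,1)$, hence $\sigma_{\min}(S^\tran A) \ge \sigma_{\min}(G)\sigma_{\min}(A)$. The standard Gordon/Davidson--Szarek concentration gives $\sigma_{\min}(G) \ge (\sqrt n - \sqrt s)/2$ with failure probability at most $e^{-(\sqrt n - \sqrt s)^2/8}$ for a single matrix; a union bound across the $N$ sketches, together with the hypothesis $N \le e^{(\sqrt n - \sqrt s)^2/16}$, produces an event $\mathcal{E}_2$ of failure probability at most $e^{-(\sqrt n - \sqrt s)^2/16}$ on which this lower bound is simultaneous. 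Since $S^\tran e \sim \NN(0, \|e\|_2^2 I_s)$, so that $\E\|S^\tran e\|_2^2 = s \|e\|_2^2$, a chi-square concentration (which is tight enough to be absorbed into $\mathcal{E}_2$ up to a constant, or alternatively into the event behind Theorem~\ref{theor3} using the lower bound on $N$) gives, on $\mathcal{E}_1 \cap \mathcal{E}_2$,
\[
\E_{S \sim \mathcal{S}} \|A_S^{\dagger} S^\tran e\|_2^2 \le \frac{4 s \|e\|_2^2}{(\sqrt n - \sqrt s)^2 \sigma_{\min}^2(A)}.
\]

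Combining the two bounds produces the noisy one-step contraction
\[
\E\|x_{k+1} - x_*\|_2^2 \le \left(1 - \tfrac{s}{36 m \kappa^2(A)}\right)\E\|x_k - x_*\|_2^2 + \frac{4 s \|e\|_2^2}{(\sqrt n - \sqrt s)^2 \sigma_{\min}^2(A)},
\]
and iterating via a standard geometric series converts the constant noise term into the horizon $\tfrac{4 s \|e\|_2^2}{(\sqrt n - \sqrt s)^2 \sigma_{\min}^2(A)} \cdot \tfrac{36 m \kappa^2(A)}{s} = \tfrac{C m \kappa^2(A) \|e\|_2^2}{(\sqrt n - \sqrt s)^2 \sigma_{\min}^2(A)}$, exactly matching \eqref{c_rate_noisy_fin}.

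The main obstacle is the uniform singular-value lower bound across the entire pre-sampled collection: it is precisely the tension between the per-matrix Gaussian tail $e^{-(\sqrt n - \sqrt s)^2/8}$ and the cardinality $N$ that forces the hypothesis $N \le e^{(\sqrt n - \sqrt s)^2/16}$ and determines the second failure probability in the statement. The orthogonality decomposition at the very first step is the conceptual key; without it, cross-terms of the form $\langle (\Id - A_S^{\dagger} A_S)(x_k - x_*), A_S^{\dagger} S^\tran e \rangle$ would contaminate the contraction and obstruct the clean reduction to Theorem~\ref{theor3}.
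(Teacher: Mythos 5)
Your proposal is correct and follows essentially the same route as the paper: the same one-step decomposition into a contraction term (handled by the consistent-case result on the ``good collection'' event) plus a noise term controlled by a uniform lower bound on $\sigma_{\min}(U^T S^{(k)})$ obtained from the Davidson--Szarek estimate and a union bound over the $N$ sketches, followed by the geometric series. Two minor points of comparison: your pointwise Pythagorean identity is actually a cleaner justification than the paper's appeal to the ``triangle inequality'' for the cross-term-free splitting, and your uniform deterministic bound on $\|A_S^{\dagger}\|$ lets you bypass the paper's Independence Lemma; just note that $\E_{S\sim\mathcal{S}}\|S^T e\|_2^2$ is an empirical average over the fixed collection rather than a Gaussian expectation, so it should be bounded (as the paper does, giving $2s\|e\|_2^2$) via the good-collection conditions rather than by $\E\|S^Te\|_2^2 = s\|e\|_2^2$.
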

\begin{remark} Both in probability and in the size of the collection condition we can observe that we need $n \gg s$ for this result. Typically, it is enough to have $n = \alpha s$ where $\alpha  \in [0,1]$ is a constant and $n$ is large enough. One can use more sophisticated estimates in place of~\eqref{smin_gaus} (such as, \cite[Corollary V.2.1]{feldheim2010universality}) to get better probability and allow a bigger collection, but they still depend on $(\sqrt{n} - \sqrt{s})$ being large.
\end{remark}

A similar result can be proved in the framework where we redraw an independent Gaussian sketch matrix $S$ at every iteration of the method rather than use a finite collection.
\begin{theorem}[Inconsistent case, independent Gaussian sampling]\label{theor5}
Suppose $A$ is a $m \times n$ matrix with full column rank ($m \ge n$), let $x_*$ be a solution of the quadratic minimization problem~\eqref{minimization_problem}, and $e := Ax_* - b$.  Assume that $s \le \alpha n$ for some constant $\alpha < 1$. For any initial estimate $x_0$, the BGK method (iteration \eqref{iter} with $S$ being an $m \times s$ random matrix with i.i.d. standard normal entries) produces a sequence $\{x_k, k \ge 0\}$ of iterates that satisfy
\begin{equation}\label{c_rate_noisy}
\E\|x_{k} - x_*\|_2^2
\le  \left( 1 - \frac{1}{80}\left[\frac{\sqrt{s}\sigma_{min}(A)}{\sqrt{s} \|A\| + \|A\|_F}\right]^2\right)^k \|x_0- x_*\|_2^2 
+ \frac{C(\sqrt{s} \|A\| + \|A\|_F)^2\|e\|^2_2}{\sigma_{min}^4(A)(\sqrt{n} - \sqrt{s})^2},
\end{equation}
where $C > 0$ is an absolute constant.
\end{theorem}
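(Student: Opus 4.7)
The plan is to follow the contraction strategy of Theorem~\ref{theor2} and then control the additional stochastic bias introduced by the residual $e = Ax_* - b$ via an independence observation. First, substituting $b = Ax_* - e$ into the iteration \eqref{iter} gives the error recursion
\begin{equation*}
x_{k+1} - x_* = (\Id - A_S^\dagger A_S)(x_k - x_*) - A_S^\dagger S^T e.
\end{equation*}
Since $A_S^\dagger A_S$ is the orthogonal projection onto the row space of $A_S$, the first summand lies in $\ker A_S$ while the second lies in that row space, so Pythagoras cleanly decomposes the squared norm. Taking expectation over the fresh Gaussian sketch and invoking the contraction estimate already established in the proof of Theorem~\ref{theor2} for the first summand yields
\begin{equation*}
\E \|x_{k+1} - x_*\|_2^2 \le \beta \,\E \|x_k - x_*\|_2^2 + \E \|A_S^\dagger S^T e\|_2^2,
\end{equation*}
with $\beta = 1 - \tfrac{1}{80}\bigl[\sqrt{s}\sigma_{min}(A)/(\sqrt{s}\|A\|+\|A\|_F)\bigr]^2$.

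The heart of the argument is the bound on the noise term $\E\|A_S^\dagger S^T e\|_2^2$. Since $x_*$ is the least-squares minimizer, the normal equations force $A^T e = 0$, so $e$ lies in the orthogonal complement of the column space of $A$. Writing $A = U\Sigma V^T$ as a thin SVD, this gives $U^T e = 0$; extending $U$ to an orthonormal basis $[U, U_\perp]$ of $\R^m$ I can write $S^T e = (S^T U_\perp)(U_\perp^T e)$, while $A_S A_S^T = (S^T U)\Sigma^2(S^T U)^T$. Because the columns of $U$ and $U_\perp$ are orthogonal and $S$ has i.i.d.\ Gaussian entries, the blocks $S^T U$ and $S^T U_\perp$ are independent, so $S^T e$ is independent of $A_S$. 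Conditioning on $A_S$ and using that $S^T e \sim \NN(0, \|e\|_2^2 I_s)$, the trace identity yields
\begin{equation*}
\E\bigl[\|A_S^\dagger S^T e\|_2^2 \mid A_S\bigr] = \|e\|_2^2 \,\operatorname{tr}\bigl[(A_S A_S^T)^{-1}\bigr].
\end{equation*}

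To finish I would estimate $\E \operatorname{tr}[(A_S A_S^T)^{-1}]$ via a standard Wishart calculation. With $G := S^T U \in \R^{s\times n}$ standard Gaussian (and $s < n$ by the assumption $s \le \alpha n$), the PSD inequality $A_S A_S^T = G\Sigma^2 G^T \succeq \sigma_{min}^2(A)\,GG^T$ gives $\operatorname{tr}[(A_S A_S^T)^{-1}] \le \operatorname{tr}[(GG^T)^{-1}]/\sigma_{min}^2(A)$. The inverse-Wishart identity $\E[(GG^T)^{-1}] = I_s/(n-s-1)$ then provides $\E \operatorname{tr}[(GG^T)^{-1}] = s/(n-s-1)$, and the elementary inequality $(\sqrt{n} - \sqrt{s})^2 \le n - s - 1$ (valid for $n \ge s+2$) converts this into $\E\|A_S^\dagger S^T e\|_2^2 \le s\,\|e\|_2^2/(\sigma_{min}^2(A)(\sqrt{n}-\sqrt{s})^2)$. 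Unrolling the one-step recursion and summing the geometric series yields the stated estimate: using $(1-\beta)^{-1} = 80(\sqrt{s}\|A\|+\|A\|_F)^2/(s\,\sigma_{min}^2(A))$, multiplying the per-iteration noise by $(1-\beta)^{-1}$ gives precisely the claimed accumulated additive term (up to the absolute constant $C$).

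The main obstacle is the independence reduction: without exploiting the normal equation $A^T e = 0$ to decouple $S^T e$ from $A_S$, one is left estimating $\E\|A_S^\dagger S^T e\|_2^2$ via pessimistic tools such as $\|A_S^\dagger\|^2 \cdot \|S^T e\|_2^2$, which inflates the noise floor by extra factors that do not match the claim. Once the decoupling is in place, the remaining ingredients -- the trace identity, the inverse Wishart formula, and the elementary comparison between $n-s-1$ and $(\sqrt{n}-\sqrt{s})^2$ -- are routine, and the assumption $s \le \alpha n$ with $\alpha<1$ is exactly what keeps $(\sqrt{n}-\sqrt{s})^2$ bounded away from zero and makes the inverse-Wishart moments finite.
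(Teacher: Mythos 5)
Your proposal is correct, and its skeleton matches the paper's: the same error recursion $x_{k+1}-x_* = (\Id - A_S^\dagger A_S)(x_k-x_*) - A_S^\dagger S^T e$, the contraction factor $\beta$ imported from Proposition~\ref{main2}, the crucial use of the normal equation $A^Te=0$ to decouple the sketch's action on $\Im(A)$ from its action on $e$, and the geometric-series unrolling with the $1/(1-\beta)$ amplification of the per-step noise. (Your Pythagoras justification of the additive split is in fact the right one; the paper's appeal to the "triangle inequality" at \eqref{noisy_1} would otherwise cost a factor of 2.) Where you genuinely diverge is in the noise bound. The paper (Lemma~\ref{independence_lemma} plus Proposition~\ref{error_one_step_estimate}) only uses independence of the two scalars $\|A_S^\dagger\|$ and $\|S^Te\|$, bounds $\E\|A_S^\dagger S^Te\|_2^2 \le \E\|A_S^\dagger\|^2\cdot\E\|S^Te\|_2^2$, and then controls $\E\,\sigma_{min}^{-2}(U^TS)$ by integrating a tail bound from the Wishart eigenvalue density (Lemma~\ref{gaussian_inv}), which is where the "$n$ large enough" caveat and the constant $20$ come from. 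You instead exploit the full independence of the blocks $S^TU$ and $S^TU_\perp$, condition on $A_S$, and evaluate the conditional expectation exactly as $\|e\|_2^2\operatorname{tr}\bigl[(A_SA_S^T)^{-1}\bigr]$, which after the PSD comparison $G\Sigma^2G^T \succeq \sigma_{min}^2(A)\,GG^T$ reduces to the closed-form inverse-Wishart mean $\E\operatorname{tr}\bigl[(GG^T)^{-1}\bigr]=s/(n-s-1)$. This replaces the operator-norm/Frobenius-norm slack and the density integration by an identity, yields a cleaner constant (your $(\sqrt n-\sqrt s)^2\le n-s-1$ check for $n\ge s+2$ is valid), and needs only $n\ge s+2$ rather than an unquantified largeness of $n$; the paper's route, on the other hand, produces a high-probability lower bound on $\sigma_{min}(U^TS)$ that it reuses verbatim in the finite-collection Theorem~\ref{theor4}, which your exact-expectation computation would not directly give.
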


Note that if $s$ is small and $A$ is well-conditioned (so that $\|A\|_F^2 \sim n \sigma_{min}^2(A)$), then \eqref{c_rate_noisy} recovers the convergence horizon of the standard Block Kaczmarz $O(\|e\|^2/\sigma_{min}^2(A))$ and the first term gives the optimal rate obtained in Theorem~\ref{theor2}. 

\begin{remark}[Absolute constants]
We obtain the result of Theorem~\ref{theor4} with the constant $C = 300$ and Theorem~\ref{theor4} with the constant $C = 1600$. These constant values are not optimized (as well as some other constants throughout the text).
\end{remark}

\begin{remark}[Dependence between the dimension and sketch size]
The performance of BGK on consistent systems improves with increasing sketch size (both in iteration and in time), but for inconsistent systems the sketch size vs convergence speed trade-off becomes more interesting: observe that for $s < n$, an increasing block size $s$ improves the first term in~\eqref{c_rate_noisy_fin}, but makes the dependence on the convergence horizon (the second term) worse. Our experiments (see Figure~\ref{fig:noisy_blocks}) suggest that, like in the consistent case, convergence in iteration improves with increasing $s$ (until the regime $s \approx n$), but with regard to computational time one might prefer smaller block sizes, especially if the goal is to achieve smaller approximation error. 

Furthermore, for consistent systems, taking a sketch of  size $s = n$ one would solve the system exactly in one step. In the presence of error, the case $s = n$ is a very bad choice since it makes the error term diverge. 

Increasing $s$ in the range $s > n$ in the consistent case does not make sense: it preserves the one step convergence, but makes the iteration slower. In the inconsistent case, further increase in $s$ improves the second error term (keeping the first term zero after one iteration), so, potentially, taking very big $s$ is beneficial for the convergence per-iteration rate. However, because of the increasing iteration complexity and memory required to perform it, taking $s > n$ still does not seem practical. 
See Section~\ref{experiments} for more details and experimental evidence.
\end{remark}

\begin{remark}[Noise term is bounded with high probability] \label{high_prob_remark}
Let $S$ be an $m \times s$ random sketch matrix with i.i.d. standard normal entries, matrix $A \in \R^{m \times n}$ and noise vector $e \in \R^{m}$ are defined as above. Assume that $s = \alpha n$ for some constant $\alpha> 0$. Then $\P(\EE_S )\ge 1 - e^{-cn}$, where
\begin{equation}\label{components}
\EE_S:= \left\{\|A_S^\dagger S^Te\|_2^2 \le \frac{8s\|e\|^2_2}{(\sqrt{n} - \sqrt{s})^2 \sigma_{min}^2(A)}\right\} \end{equation}
and $c = c(\alpha)$ is a positive constant. Indeed, this follows from Lemma~\ref{independence_lemma} below, the standard estimate for the smallest singular value of a Gaussian matrix in the form of~\eqref{smin_gaus} (with $t = (\sqrt{n} - \sqrt{s})/2$) and Bernstein's inequality. The estimate~\eqref{components} shows that we could make $O(e^{cn})$ steps of the BGK algorithm so that all the error terms stay bounded with high probability. This explains an experimentally observed robustness of BGK (see Figure~\ref{fig:spiky_noise}). In the case when the vector $e$ is sparse and spiky, Gaussian sketching ``smoothes the error", namely, some iterations of the regular Block Kaczmarz method result in a huge error $\|x_k - x_*\|$, whereas the iterations of BGK steadily converge towards the limiting error level.
\end{remark}

\section{Proofs of Main Results}\label{sec:proofs}

\subsection{Proof of Theorem~\ref{theor1}, convergence estimate via condition number}\label{proof1}
We start the proof with two auxiliary lemmas. The first one provides a sharp non-asymptotic bound for the norm of a random Gaussian matrix with independent $N(0,1)$ entries. 

\begin{lemma}\label{sup_norm}
Let $X$ be a $m \times n$ random matrix, $m \ge n$, whose entries are independent copies of a standard normal random variable. Then for all $t \ge 0$
$$
\P(\sigma_{max}(X) > (2+t) \sqrt{m}) \le \exp(-t^2m/2).
$$
\end{lemma}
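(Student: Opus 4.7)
The plan is to combine a mean bound for the largest singular value of a Gaussian matrix with Gaussian concentration of measure. The key observation is that the map $X \mapsto \sigma_{max}(X) = \|X\|$, viewed as a function of the $mn$ i.i.d.\ standard normal entries of $X$ equipped with the Euclidean (equivalently, Frobenius) metric, is $1$-Lipschitz. Indeed, for any matrices $X,Y$ we have $|\|X\| - \|Y\|| \le \|X - Y\| \le \|X - Y\|_F$.

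First, I would invoke the classical Gordon/Slepian comparison bound for the expectation of the operator norm of a standard Gaussian matrix, which gives
\[
\E \,\sigma_{max}(X) \le \sqrt{m} + \sqrt{n}.
\]
Since the hypothesis $m \ge n$ yields $\sqrt{m} + \sqrt{n} \le 2\sqrt{m}$, we obtain $\E \,\sigma_{max}(X) \le 2\sqrt{m}$.

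Next, I would apply the Gaussian concentration inequality (Borell--TIS): for any $1$-Lipschitz function $f$ of standard Gaussian variables and any $u \ge 0$,
\[
\P\bigl( f \ge \E f + u \bigr) \le \exp(-u^2/2).
\]
Applied to $f = \sigma_{max}(X)$ with $u = t\sqrt{m}$, this yields
\[
\P\bigl(\sigma_{max}(X) > 2\sqrt{m} + t\sqrt{m}\bigr) \le \P\bigl(\sigma_{max}(X) > \E\sigma_{max}(X) + t\sqrt{m}\bigr) \le \exp(-t^2 m/2),
\]
which is the desired bound.

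There is no genuine obstacle here, as both ingredients are standard; the only care needed is to verify the Lipschitz constant of $\sigma_{max}$ with respect to the Frobenius norm (done above via the triangle inequality for the operator norm and the inequality $\|\cdot\| \le \|\cdot\|_F$), and to check that the factor of $2$ in front of $\sqrt{m}$ comes precisely from the assumption $m \ge n$. If one wanted a self-contained derivation, the mean bound could alternatively be obtained via Gordon's min-max theorem applied to $\sigma_{max}(X) = \sup_{\|u\|=\|v\|=1} u^\tran X v$, together with a comparison to the supremum of a Gaussian process indexed by pairs $(u,v)$.
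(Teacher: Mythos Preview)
Your proof is correct and is precisely the standard argument the paper points to: the paper does not give its own proof of this lemma but simply cites \cite{DavSza}, where the result is obtained exactly as you outline (Gordon's bound $\E\sigma_{max}(X)\le\sqrt{m}+\sqrt{n}\le 2\sqrt{m}$ combined with Gaussian concentration for the $1$-Lipschitz map $X\mapsto\|X\|$). There is nothing to add or correct.
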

This estimate is well-known, its proof can be found in, e.g., \cite{DavSza}. The second auxiliary lemma gives a lower bound for the conditional expectation of a vector norm in terms of the unconditional expectation.
\begin{lemma} \label{exp_cond}
Let $S$ be $m \times s$ matrix with i.i.d. standard normal entries and $A$ is $m \times n$ fixed matrix. Let $\EE$ be any event such that $\P(\EE) \ge 1 - e^{-cm}$ for some $c \in (0, 1/2]$.
Then for any fixed $u \in \mathds{S}^{n-1}$ and large enough $m$,
$$ \E( \|A_S u\|_2^2| \EE) \ge \E\|A_S u\|_2^2 - e^{-cm/2}\|A\|^2. $$
If $c = 1/2$, it is enough to take $m \ge 22$ for the statement to hold.
\end{lemma}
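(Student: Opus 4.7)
\medskip
\noindent\textbf{Proof proposal for Lemma~\ref{exp_cond}.}
The plan is to reduce the problem to a statement about a single chi-squared random variable, then use a truncation / high-moment Hölder bound to control the contribution of the bad event $\EE^c$.  Set $v:=Au\in\R^m$, so that $\|v\|_2\leq \|A\|$ since $u\in\mathds{S}^{n-1}$.  Because $A_S u = S^{\tran}v$ and each coordinate of $S^{\tran}v$ is a linear combination of independent standard Gaussians with total variance $\|v\|_2^2$, the vector $S^{\tran}v$ is $N(0,\|v\|_2^2\, I_s)$; equivalently, $\|A_S u\|_2^2 \stackrel{d}{=} \|v\|_2^2\,\|Z\|_2^2$ with $Z\sim N(0,I_s)$. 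Writing $X:=\|A_S u\|_2^2\geq 0$, the key algebraic identity is
\begin{equation*}
\E(X\mid \EE)\;=\;\frac{\E(X\,\ind_\EE)}{\P(\EE)}\;\geq\;\E(X\,\ind_\EE)\;=\;\E X \;-\;\E(X\,\ind_{\EE^c}),
\end{equation*}
where the inequality uses $\P(\EE)\leq 1$ and $X\geq 0$. Thus it suffices to prove the estimate $\E(X\,\ind_{\EE^c})\leq e^{-cm/2}\|A\|^2$, and using $X\leq \|A\|^2\|Z\|_2^2$ this reduces to showing $\E(\|Z\|_2^2\,\ind_{\EE^c})\leq e^{-cm/2}$.

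For the latter I would invoke Hölder's inequality at a carefully chosen exponent $p$:
\begin{equation*}
\E(\|Z\|_2^2\,\ind_{\EE^c})\;\leq\;\bigl(\E\|Z\|_2^{2p}\bigr)^{1/p}\,\P(\EE^c)^{\,1-1/p}.
\end{equation*}
The chi-squared moments satisfy $\E\|Z\|_2^{2p} = 2^p\,\Gamma(s/2+p)/\Gamma(s/2)\leq (s+2p)^p$, so choosing $p=m$ gives $(\E\|Z\|_2^{2m})^{1/m}\leq s+2m$ and $\P(\EE^c)^{1-1/m}\leq e^{-c(m-1)}$, yielding $\E(\|Z\|_2^2\,\ind_{\EE^c})\leq (s+2m)\,e^{c}\,e^{-cm}$. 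This is bounded by $e^{-cm/2}$ as soon as $(s+2m)e^c\leq e^{cm/2}$, which is a mild condition of the form "$m$ large enough"; when $c=1/2$ and $s\leq m$ (an implicit assumption in the sketching setup), a direct numerical check shows it is satisfied once $m\geq 22$. An alternative, essentially equivalent implementation would split on the level set $\{\|Z\|_2^2\leq M\}$ for $M\asymp cm$ and combine the chi-squared tail bound $\P(\|Z\|_2^2\geq M)\leq e^{-M/8}$ with $\P(\EE^c)\leq e^{-cm}$; this yields the same conclusion.

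The only real obstacle is the one just discussed: a naive Cauchy--Schwarz ($p=2$) produces the factor $\sqrt{s(s+2)}$, which is too weak to be absorbed into $e^{-cm/2}$ when $s$ may be as large as $m$, so one must push Hölder up to a moment of order $\sim m$ to trade a polynomial prefactor against a negligible loss of $e^{c}$ in the exponent. After that, the computation is routine and yields exactly the claimed bound.
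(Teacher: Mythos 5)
Your argument is correct, but it proceeds by a genuinely different route than the paper's. The paper works at the level of tail probabilities: it first shows $\P(\|A_Su\|_2^2>t\mid\EE)\ge\P(\|A_Su\|_2^2>t)-e^{-cm}$, integrates this over $t\in[0,9m\|A\|^2]$, and controls the remaining tail integral $\int_{9m\|A\|^2}^{\infty}\P(\|A_Su\|_2^2>t)\,dt$ via the crude bound $\|S^TA\|\le\|S^T\|\|A\|$ together with the Gaussian operator-norm concentration of Lemma~\ref{sup_norm}; this yields a remainder of $e^{-cm}(9m+18)\|A\|^2$, absorbed into $e^{-cm/2}\|A\|^2$ once $9m+18\le e^{cm/2}$ (hence $m\ge22$ for $c=1/2$). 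You instead use the cleaner identity $\E(X\mid\EE)\ge\E X-\E(X\ind_{\EE^c})$, reduce to the exact distributional fact $\|A_Su\|_2^2=\|Au\|_2^2\,\|Z\|_2^2$ with $\|Z\|_2^2\sim\chi^2_s$, and kill the bad-event contribution with Hölder at exponent $p=m$, getting the remainder $(s+2m)e^{c}e^{-cm}\|A\|^2$ — the same polynomial-times-$e^{-cm}$ shape, and your numerical check at $c=1/2$, $s\le m$, $m\ge22$ goes through. Your version is more self-contained (it never needs the operator-norm lemma or the factorization $\|S^TA\|\le\|S\|\|A\|$) and would generalize directly to any sketch whose entries have controlled moments, while the paper's version recycles machinery it needs anyway in Proposition~\ref{main1}. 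One small caveat: in your parenthetical ``alternative implementation,'' the tail bound $\P(\|Z\|_2^2\ge M)\le e^{-M/8}$ is false for $M\asymp cm$ when $s$ is comparable to $m$ (since $\E\|Z\|_2^2=s$ may exceed $M$); the truncation level there must be taken of order $s+cm$. This does not affect your main Hölder argument, which is complete as written.
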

\begin{proof}

For any $t > 0$,
\begin{align*}
\P(&\|A_Su\|_2^2 > t|\EE) = 1 - \P(\|A_Su\|_2^2 \le t|\EE) \\
&\ge 1 - \frac{\P(\|A_Su\|_2^2 \le t)}{\P(\EE)} \ge \P(\|A_Su\|_2^2 > t) - e^{-cm},
\end{align*}
since $\P(\EE) \ge 1 - e^{-cm}$. Then,
\begin{align*}
 \E( &\|A_S u\|_2^2| \EE) \ge \int_{t = 0}^{9 m\|A\|^2} \P(\|A_S u\|_2^2 > t| \EE) dt \\
&\ge \int_{t = 0}^{9 m\|A\|^2} \P(\|A_Su\|_2^2 > t) dt - \int_{t = 0}^{9 m\|A\|^2} e^{-cm} dt\\
&\ge \E\|A_Su\|_2^2 - \int_{9m\|A\|^2}^{\infty} \P(\|A_Su\|_2^2 > t) dt -\frac{9 m\|A\|^2}{e^{cm}}.
\end{align*}
To bound the integral term, note that a trivial inequality $\|S^TA\| \le \|S^T\|\|A\|$ implies
$$ \P(\|A_Su\|_2^2 > t) \le \P(\|A_S\|_2^2 > t)\le \P\left(\|S^T\| > \frac{\sqrt{t}}{\|A\|}\right). $$
This allows as to bound
\begin{align}
&\int_{t = 9 m\|A\|^2}^{\infty} \P(\|S^T Au\|_2^2 > t) dt \nonumber\\
&\le \int_{q = 3}^{\infty} \P(\|S^T\| > q \sqrt{m}) 2 q m\|A\|^2 dq \label{v1}\\
&= \|A\|^2 \int_3^{\infty} \exp\left(-\frac{q^2 m}{9\cdot 2}\right) \, 2 q m \, d q \le 18e^{-m/2}\|A\|^2, \label{v2}
\end{align}
using a change of variable $q =\sqrt{t}/\sqrt{m}\|A\|$ in \eqref{v1}, Lemma~\ref{sup_norm} and the fact that $q-2 \ge q/3$ for $ q \ge 3$ in \eqref{v2}.
As a result, 
\begin{align*}
\E( \|A_S u\|_2^2| \EE) &\ge \E\|A_Su\|_2^2 -  e^{-cm}\|A\|^2(9m + 18) \\
 &\ge \E\|A_Su\|_2^2 - e^{-cm/2} \|A\|^2
\end{align*}
for $m$ large enough. Note that $e^{ m/4} \ge 9m + 18$ for all $m \ge 22$. Lemma~\ref{exp_cond} is thus proved.
\end{proof}
\begin{remark}
Note that the reverse inequality between conditional and unconditional expectation (given that we condition on the highly probable event $\EE$ with $\P(\EE) \ge 1 - e^{-cm}$) is straightforward:
$$\E\|A_S u\|_2^2 = \E( \|A_S u\|_2^2| \EE) \P(\EE) + \E( \|A_S u\|_2^2| \EE^c) \P(\EE^c) \ge \E( \|A_S u\|_2^2| \EE)(1 - e^{-cm}).
$$
Thus, for $m$ large enough, we have a tight two-sided estimate:
$$ \E\|A_S u\|_2^2 - e^{-cm/2}\|A\|^2 \le \E( \|A_S u\|_2^2| \EE) \le \frac{\E \|A_S u\|_2^2}{1 - e^{-cm}}.$$
\end{remark}

We are ready to bound the expected improvement after one step of the BGK method.
\begin{proposition}\label{main1} Suppose $A$ is a $m \times n$ matrix with full column rank ($m \ge n$) and let $x_*$ be a solution of the system $Ax = b$.  Let $x_k$ be a fixed vector in $\R^n$. Let $S$ be $m \times s$ matrix with i.i.d. standard normal entries and $x_{k+1}$ be obtained by iteration \eqref{iter}. Then, 
$$
\E_S\|x_{k+1} - x_*\|_2^2 \le \beta \|x_k - x_*\|_2^2,
$$
where
\begin{equation}\label{beta}
\beta =  1 - \frac{s}{10m \kappa^2(A)} - \frac{e^{-m/4}}{10m}.
\end{equation}
\end{proposition}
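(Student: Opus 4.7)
The plan is to reduce the statement to a one-step lower bound on $\E_S\|A_S^\dagger A_S u\|_2^2$ for a unit vector $u$, and then exploit the fact that $A_S^\dagger A_S$ is an orthogonal projection. Consistency of the system gives $S^Tb = S^TAx_* = A_Sx_*$, so iteration \eqref{iter} produces the clean error recursion
\begin{equation*}
x_{k+1} - x_* = (\Id - A_S^\dagger A_S)(x_k - x_*).
\end{equation*}
Almost surely (when $s \le n$, which is the nontrivial regime), $A_S^\dagger A_S$ is the orthogonal projection onto the row space of $A_S$, so Pythagoras yields $\|x_{k+1}-x_*\|_2^2 = \|x_k - x_*\|_2^2 - \|A_S^\dagger A_S(x_k - x_*)\|_2^2$. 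It therefore suffices to show $\E_S\|A_S^\dagger A_S u\|_2^2 \ge 1 - \beta$ for $u := (x_k - x_*)/\|x_k - x_*\|_2$.

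Next, I would invoke the elementary singular-value inequality
\begin{equation*}
\|A_S^\dagger A_S u\|_2^2 \;\ge\; \frac{\|A_S u\|_2^2}{\|A_S\|^2},
\end{equation*}
verified by taking the SVD $A_S = U\Sigma V^T$ and expanding both sides in the right-singular basis, to replace the projection by quantities we can compute. The numerator is simple: $A_Su = S^T(Au)$ is a Gaussian vector in $\R^s$ with i.i.d.\ $N(0,\|Au\|_2^2)$ entries, so $\E\|A_Su\|_2^2 = s\|Au\|_2^2 \ge s\sigma_{min}^2(A)$. The denominator is controlled via $\|A_S\| \le \|S\|\|A\|$ together with Lemma~\ref{sup_norm} at $t = 1$: on the event $\EE := \{\|S\| \le 3\sqrt{m}\}$ we have $\|A_S\|^2 \le 9m\|A\|^2$, and $\P(\EE) \ge 1 - e^{-m/2}$.

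The real obstacle is that the numerator and denominator are strongly correlated, so taking expectations separately is not legitimate. The remedy is to condition on $\EE$ and write
\begin{equation*}
\E_S\|A_S^\dagger A_S u\|_2^2 \;\ge\; \P(\EE)\,\E\!\left(\frac{\|A_Su\|_2^2}{\|A_S\|^2}\,\Big|\,\EE\right) \;\ge\; \P(\EE) \cdot \frac{\E(\|A_Su\|_2^2 \mid \EE)}{9m\|A\|^2},
\end{equation*}
at which point Lemma~\ref{exp_cond} (applied with $c = 1/2$, which is valid for $m \ge 22$) transfers the Gaussian computation to the conditional expectation, giving $\E(\|A_Su\|_2^2 \mid \EE) \ge s\sigma_{min}^2(A) - e^{-m/4}\|A\|^2$. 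Dividing by $9m\|A\|^2$, multiplying by $\P(\EE) \ge 1 - e^{-m/2}$, and absorbing the latter into a slightly larger constant (replacing $1/9$ by $1/10$ once $m$ is large enough) yields the stated value of $\beta$. The necessity of Lemma~\ref{exp_cond} lies exactly in this conditioning step: without it, restricting to $\EE$ could, in principle, bias the numerator downward enough to kill the lower bound. Everything else is routine Gaussian bookkeeping.
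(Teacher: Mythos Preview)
Your proposal is correct and follows essentially the same route as the paper: reduce via Pythagoras to lower-bounding $\E\|A_S^\dagger A_S u\|_2^2$, use $\|A_S^\dagger A_S u\|_2^2 \ge \|A_S u\|_2^2/\|A_S\|^2$ (the paper writes this as $\sigma_{min}^2(A_S^\dagger)\|A_S u\|_2^2$), condition on $\EE=\{\|S\|\le 3\sqrt m\}$ via Lemma~\ref{sup_norm}, invoke Lemma~\ref{exp_cond} with $c=1/2$ to transfer the Gaussian moment computation $\E\|A_Su\|_2^2=s\|Au\|_2^2$ to the conditional expectation, and absorb the $(1-e^{-m/2})$ factor into the constant $1/9\to 1/10$. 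Every ingredient you name matches the paper's proof.
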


\begin{proof}
Since
\begin{align*}
x_{k+1} - x_* &= x_k - x_* - (A_S^{\dagger})(A_S x_k - A_S x_*)  \\
&= (\Id - A_S^{\dagger}A_S)(x_k - x_*),
\end{align*}
we have
$$
\E \|x_{k+1}-x_*\|_2^2 = \E \|(\Id - A_S^{\dagger}A_S)(x_k - x_*)\|_2^2.
$$
Since $A_S^{\dagger}A_S$ is an orthogonal projector,
$$
\E \|(\Id - A_S^{\dagger}A_S)u\|_2^2 = \|u\|_2^2 - \E \|A_S^{\dagger}A_Su\|_2^2.
$$
So, our goal is to prove that for any fixed $u \in \mathds{S}^{n-1}$
\begin{equation}\label{main_2}
\E \|A_S^{\dagger}A_Su\|_2^2 \ge 1 - \beta, \text{ where } \beta \text{ is defined by \eqref{beta}}.
\end{equation}
Now, for  any $\gamma > 0$, by the total expectation theorem,
\begin{align}
\E\|A_S^{\dagger}A_Su\|_2^2 \ge \E(\sigma_{min}^2(\Ac)&\cdot \|A_S u\|_2^2) \nonumber\\
\ge \E \big( \sigma_{min}^2(\Ac) \|A_S u\|_2^2 \,&\vert \,\sigma_{min}^2(\Ac) \ge \gamma^{-2}\big) \cdot \P\big(\sigma_{min}^2(\Ac) \ge \gamma^{-2}\big) \nonumber\\
\ge \frac{1}{\gamma^2} \E( \|A_S u\|_2^2| \EE) \P(\EE&),
\end{align}
where $\EE := \{\|A_S\| \le \gamma\}.$ Now, with $\gamma = 3 \sqrt{m}\|A\|$, we have: 

\begin{enumerate}
\item since $\|S^T A\| \le \|S^T\|\|A\|$,
$$
 \P(\EE) = \P (\|S^TA\| \le 3 \sqrt{m}\|A\|) \ge  \P(\|S^T\| \le 3 \sqrt{m})
 $$ 
 and, by Lemma~\ref{sup_norm},
$$
\P (\|S^T\| \le 3 \sqrt{m})  \ge 1 - \exp(-m/2).
$$
\item Then, by Lemma~\ref{exp_cond} applied to the event $\EE$, 
$$ \E( \|A_S u\|_2^2| \EE) \ge \E( \|A_S u\|_2^2) - e^{-m/4}\|A\|^2. $$
\item Finally, unconditional expectation can be computed directly: if $\{S_i\}_{i = 1}^s$ denote the columns of the matrix $S$,
$$
\E\|A_S u\|_2^2 = \E \sum_{i=1}^s \langle S_i^T, Au\rangle^2 = s \|Au\|_2^2 \ge s \sigma_{min}^2(A).
$$
\end{enumerate}
Combining the three estimates above, we obtain
\begin{align}\label{main1:final_est}
\E \|A_S^{\dagger}A_Su\|_2^2 &\ge \frac{(s \sigma_{min}^2(A)-e^{-m/4}\|A\|^2)(1 - e^{-m/2})}{9 m \|A\|^2} \nonumber\\
&\ge \frac{s \sigma_{min}^2(A)}{10m \sigma^2_{max}(A)} - \frac{e^{-m/4}}{10m}, 
\end{align}
for any $m \ge 22$. This concludes the proof of Proposition~\ref{main1}.
\end{proof}
\begin{remark}
The fact that it is enough to prove an estimate \eqref{main_2} directly follows from the result of [\cite{GowRic}, Theorem~]. We decided to include the derivation of this step for the completeness of exposition. 
\end{remark}
{\bfseries Proof of Theorem~\ref{theor1}.} Note that due to the condition number assumption, $\kappa^2(A) \le e^{m/4}/3$, the exponential term in the one step estimate from Proposition~\ref{main1} becomes negligible:
$$
1 - \frac{s}{10m \kappa^2(A)} - \frac{e^{-m/4}}{10m} \le 1 - \frac{s}{15m \kappa^2(A)}.
$$
Thus,
 \begin{align*}
\E\|x_{k} - x_*\|_2^2  &= \E_{S_1} \E_{S_2}  \ldots \E_{S_k} \|x_{k} - x_*\|_2^2  \\
&\le \left[ 1 - \frac{s}{15m \kappa^2(A)}\right]^k \|x_0- x_*\|_2^2.
\end{align*}
Here, $\E_{S_1}, \ldots, \E_{S_k}$ refer to the randomness of choosing a matrix $S_i$ (independent from all Gaussian sketches $S_1, \ldots, S_{i-1}$ that were used during previous steps). The last inequality is guaranteed by Proposition~\ref{main1}. Thus Theorem~\ref{theor1} is proved.

\bigskip

\subsection{Proof of Theorem~\ref{theor2}, convergence estimate via the mix of Frobenius and operator norms}\label{proof2}

The first auxiliary lemma is a direct corollary of a matrix deviation inequality (see, e.g., \cite[Theorem~9.1.1]{VerHDP}). We will use it to make an estimate for the norm $\|S^TA\|$ (more accurate than a trivial estimate $\|S^T A\| \le \|S^T\|\|A\|$ that was used in the proof of Theorem~\ref{theor1}).

\begin{lemma}\label{gordon}
Let $S$ be $m \times s$ matrix with i.i.d. standard normal entries and $A$ is $m \times n$ fixed matrix. Let $\mathds{S}^{n-1}$ denote the unit sphere in $\R^n$. Then the following holds with some absolute constant $C > 0$:
$$ 
\E \sup\limits_{w \in A\mathds{S}^{n-1}}\|S^T w\|_2 \le \sqrt{s} \|A\| + \|A\|_F.
$$
\end{lemma}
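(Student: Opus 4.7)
The plan is to derive the bound as a direct consequence of the matrix deviation inequality (\cite[Theorem~9.1.1]{VerHDP}). The key observation is that $\sup_{w \in A\mathds{S}^{n-1}} \|S^T w\|_2 = \|S^T A\|$ (the $\ell_2 \to \ell_2$ operator norm) and that the image set $T := A\mathds{S}^{n-1} \subset \R^m$ is a bounded set to which the deviation inequality applies. Since the entries of $S^T$ are i.i.d.\ standard normal, their sub-gaussian norm is an absolute constant, so applying the inequality to the $s \times m$ matrix $S^T$ and the set $T$ yields
\[
\E \sup_{w \in T} \Bigl| \|S^T w\|_2 - \sqrt{s}\,\|w\|_2 \Bigr| \le C \gamma(T),
\]
where $\gamma(T) = \E \sup_{w \in T} \langle g, w\rangle$ with $g \sim N(0, I_m)$ is the Gaussian width of $T$. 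A single triangle inequality then gives
\[
\E \sup_{w \in T} \|S^T w\|_2 \le \sqrt{s}\,\sup_{w \in T} \|w\|_2 + C\,\gamma(T).
\]

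Next, I would identify the two geometric quantities attached to $T = A\mathds{S}^{n-1}$. The radius is $\sup_{v \in \mathds{S}^{n-1}} \|Av\|_2 = \|A\|$. For the Gaussian width, by duality,
\[
\gamma(T) = \E \sup_{v \in \mathds{S}^{n-1}} \langle A^T g, v\rangle = \E \|A^T g\|_2 \le \sqrt{\E \|A^T g\|_2^2} = \|A\|_F,
\]
using Jensen's inequality together with $\E\|A^T g\|_2^2 = \mathrm{tr}(A A^T) = \|A\|_F^2$. Combining these two identifications produces the claimed bound $\sqrt{s}\,\|A\| + \|A\|_F$, with the absolute constant mentioned in the lemma statement implicit in front of $\|A\|_F$.

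I do not expect a real obstacle: this is a textbook application of a matrix deviation inequality together with two routine computations. If one needs the literal constant $1$ in front of $\|A\|_F$ exactly as displayed (rather than an absolute $C$), one can bypass Vershynin's theorem and argue directly via the Chevet/Gordon inequality. That would mean comparing the Gaussian process $X_{u,v} = u^T S^T A v$ indexed by $(u,v) \in \mathds{S}^{s-1} \times \mathds{S}^{n-1}$ against $Y_{u,v} = \|Av\|_2 \langle h, u\rangle + \langle g, Av\rangle$ with independent $h \sim N(0, I_s)$ and $g \sim N(0, I_m)$, verifying the second-moment domination required by Sudakov-Fernique, and then taking expectations of suprema to obtain $\E \sup X_{u,v} \le \E \|h\|_2 \|A\| + \E\|A^T g\|_2 \le \sqrt{s}\,\|A\| + \|A\|_F$.
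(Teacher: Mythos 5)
Your proposal is correct and matches the paper's argument: the paper proves the lemma via Chevet's inequality (your fallback route), bounding $\E\sup_{w\in A\mathds{S}^{n-1}}\|S^Tw\|_2$ by $\omega(\mathds{S}^{s-1})\rad(A\mathds{S}^{n-1})+\omega(A\mathds{S}^{n-1})\rad(\mathds{S}^{s-1})$ and computing $\rad(A\mathds{S}^{n-1})\le\|A\|$, $\omega(A\mathds{S}^{n-1})\le\|A\|_F$, $\omega(\mathds{S}^{s-1})\le\sqrt{s}$ exactly as you do. Your primary route via the matrix deviation inequality is the framing the paper itself cites for this lemma and yields the same two geometric quantities, so the two arguments are essentially identical (up to the absolute constant in front of $\|A\|_F$, which the paper's own statement and proof are already inconsistent about).
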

\begin{proof}
Chevet's inequality (in the form obtained in \cite[Corollary~2.4]{gordon1985some}, see also \cite[Exercise~8.7.4]{VerHDP}) states that for any $U \subset \R^n$
\begin{equation}\label{chevet}
\E \sup\limits_{x \in U} \|S^T x\|_2 = \E \sup \limits_{x \in U, y \in \mathds{S}^{s-1}}  \langle S^T x, y\rangle \le \omega(U) \rad(\mathds{S}^{s-1}) + \omega( \mathds{S}^{s-1}) \rad(U),
\end{equation}
where radius of the set $\rad(U) :=\frac{1}{2} \sup\{ \|x-y\|_2\, : \, x, y \in U\}$, and Gaussian width $\omega(U)$ is defined by 
$$
\omega(U) := \E_g \sup\limits_{x \in U}\langle g,x \rangle \quad \text{ where } \quad g \sim N(0, I_n).
$$

Now, $\rad(\mathds{S}^{s-1}) = 1$ and $\omega(\mathds{S}^{s-1}) = \E \|g\|_2 \le \sqrt{s}$ (for $g \sim N(0, I_s)$) by Jensen's inequality. Moreover, in the case when $U$ is an ellipsoid $U = A\mathds{S}^{n-1}$, Gaussian width as well as the $L_2$-norm bound for the element in $U$ are bounded in terms of the norms of the matrix $A$, namely, $\omega(U) \le \|A\|_F$ (see, e.g., \cite[Section 7.6]{VerHDP}) and 
$$\rad(U) \le \frac{1}{2} \sup_{x, y \in U} (\|x\|_2 +\|y\|_2) \le \sup_{x \in U}\|x\|_2 = \sup_{y \in \mathds{S}^{n-1}}\|Ay\|_2 = \|A\|.$$ 
Substituting radii and Gaussian width values to the right hand side of \eqref{chevet} concludes Lemma~\ref{gordon}.
\end{proof}

The second auxiliary lemma estimates the norm $\|S^TAx\|_2$ with high probability. It relies on the following version of Cram\'er's concentration inequality (see, e.g., \cite{BouLugMas})
 \begin{theorem}[Cram\'er's Theorem]\label{cramer}
 Let $X$ be a random variable, such that for all $\lambda \in \R$ its moment generating function is finite: $\E e^{\lambda X} < +\infty$.
 Let $X_1, \ldots X_n$ be i.i.d. copies of $X$, and set $S = \sum_{i = 1}^n X_i$.  Then for any $a < \E X$ we have 
 $$\P (S/n < \alpha) \le \exp(-I(\alpha)\cdot n),$$
  where the function $I: \R \to [0, +\infty]$ is defined by 
 \begin{equation}\label{i-function}
   I(\alpha) = \sup\limits_{t \in \R} (t\alpha - \log \E \exp(tX)).  
   \end{equation}
 \end{theorem}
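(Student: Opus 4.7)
The plan is to derive this via the classical exponential tilting (Chernoff) argument, combined with a short convex-analytic lemma identifying the two-sided rate function $I(\alpha)$ with its restriction to negative tilts. Because the moment generating function is finite everywhere, the cumulant generating function $\psi(t):=\log\E e^{tX}$ is finite, convex, and smooth on $\R$, with $\psi(0)=0$ and $\psi'(0)=\E X$; this regularity is what makes the rate function well-behaved.

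First I would fix any $t<0$ and apply Markov's inequality to the nonnegative variable $e^{tS}$. Since $t<0$, the event $\{S/n<\alpha\}$ coincides with $\{e^{tS}>e^{tn\alpha}\}$, so
\[
\P(S/n<\alpha) \;\le\; e^{-tn\alpha}\,\E e^{tS} \;=\; e^{-tn\alpha}\,(\E e^{tX})^n \;=\; \exp\bigl(-n\,(t\alpha-\psi(t))\bigr),
\]
using independence of the $X_i$ in the second equality. Optimizing over $t<0$ yields
\[
\P(S/n<\alpha) \;\le\; \exp\bigl(-n\,I^{-}(\alpha)\bigr), \qquad I^{-}(\alpha):=\sup_{t<0}\bigl(t\alpha-\psi(t)\bigr).
\]

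It remains to argue $I^{-}(\alpha)=I(\alpha)$ under the hypothesis $\alpha<\E X$. Set $f(t):=t\alpha-\psi(t)$; this is concave (since $\psi$ is convex) with $f(0)=0$ and $f'(0)=\alpha-\E X<0$. Because $f'$ is non-increasing, $f'(t)\le f'(0)<0$ for every $t\ge 0$, so $f$ is strictly decreasing on $[0,\infty)$ and $\sup_{t\ge 0}f(t)=f(0)=0$. On the other hand, a first-order expansion at $0$ gives $f(-\varepsilon)=\varepsilon(\E X-\alpha)+O(\varepsilon^2)>0$ for small $\varepsilon>0$, so $\sup_{t<0}f(t)>0$. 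Combining, $I(\alpha)=\sup_{t\in\R}f(t)=\sup_{t<0}f(t)=I^{-}(\alpha)$, which together with the Chernoff bound above completes the proof.

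The main obstacle is really just the careful bookkeeping around signs (the fact that $t<0$ is the correct Chernoff regime for a lower-tail event) together with the convex-analysis step verifying the equality of optimization domains; both resolve cleanly because $\alpha<\E X$ forces any maximizer $t^*$ of $t\alpha-\psi(t)$ to lie strictly to the left of $0$. No deeper large-deviation machinery (such as the matching Cramér change-of-measure lower bound) is needed for the upper-tail statement used in the paper.
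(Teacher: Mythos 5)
Your argument is correct: the lower-tail Chernoff bound with a negative tilt, followed by the observation that concavity of $t\mapsto t\alpha-\psi(t)$ together with $\alpha<\E X$ forces the supremum defining $I(\alpha)$ to be attained on $t<0$, is exactly the standard proof of this inequality. The paper itself states this as a known result and cites a reference rather than proving it, and your write-up matches the classical argument found there, so there is nothing to reconcile.
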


\begin{lemma}\label{small_ball}
Let $S$ be $m \times s$ matrix with i.i.d. standard normal entries and $v \in \R^m$ is a fixed vector. Then
$$ \P\big( \|S^T v\|_2^2 > \|v\|^2 s/10\big) \ge 0.5. $$
\end{lemma}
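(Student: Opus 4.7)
The plan is to reduce $\|S^T v\|_2^2$ to a scaled chi-squared random variable and then apply Cram\'er's Theorem (Theorem~\ref{cramer}) to its lower tail at the threshold $s/10$.

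First I would normalize by scaling so that $\|v\|_2 = 1$ (the quantity $\|S^T v\|_2^2 / \|v\|_2^2$ is homogeneous of degree zero in $v$). Writing $S_1, \ldots, S_s$ for the columns of $S$, one has $S^T v = (\langle S_j, v\rangle)_{j=1}^s$; since the columns are i.i.d.\ standard Gaussian vectors in $\R^m$ and $v$ has unit norm, the coordinates $\langle S_j, v\rangle$ are i.i.d.\ $N(0,1)$ random variables. Consequently $\|S^T v\|_2^2 = \sum_{j=1}^s X_j$, where $X_j := g_j^2$ are i.i.d.\ copies of a squared standard normal with mean~$1$.

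With this chi-squared representation, the claim reduces to showing $\P\!\left(\tfrac{1}{s}\sum_j X_j < 1/10\right) \le 1/2$. Since $1/10 < 1 = \E X$, Theorem~\ref{cramer} bounds this probability by $\exp(-I(1/10)\cdot s)$. The moment generating function of $X$ is the standard $M(t) = (1-2t)^{-1/2}$ for $t<1/2$, so the optimization in~\eqref{i-function} is a one-variable calculus exercise: setting the derivative of $t/10 + \tfrac{1}{2}\log(1-2t)$ to zero yields the interior maximizer $t^* = -9/2$ and the closed-form value $I(1/10) = \tfrac{1}{2}\log 10 - \tfrac{9}{20}$.

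The final step is a numerical check: since $\tfrac{1}{2}\log 10 - \tfrac{9}{20} \approx 0.7013$ exceeds $\log 2 \approx 0.6931$, one obtains $\exp(-I(1/10) s) \le 2^{-s} \le 1/2$ for every $s \ge 1$. Passing to the complement (and noting that $\|S^T v\|_2^2$ has a continuous distribution, so that the strict inequality in the statement costs nothing) gives the claim. I do not foresee any genuine obstacle: the only delicate point is that the constant $1/10$ seems to be tuned precisely so that the rate function clears $\log 2$, ensuring the $0.5$ lower bound is valid already at $s=1$ rather than only for sufficiently large~$s$.
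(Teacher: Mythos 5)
Your proof is correct and follows essentially the same route as the paper: both reduce $\|S^Tv\|_2^2/\|v\|_2^2$ to a chi-squared sum of $s$ i.i.d.\ squared standard normals and apply Cram\'er's theorem at $\alpha = 1/10$, obtaining the rate function $I(1/10) = \tfrac{1}{2}\ln 10 - \tfrac{9}{20} > \ln 2$ so that the bound holds already at $s=1$. Your explicit optimization of the moment generating function and the remark about the continuity of the distribution are just slightly more detailed versions of steps the paper leaves implicit.
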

\begin{proof}
Note that a random variable $Z := \|S^Tv\|_2^2/\|v\|_2^2$ has a distribution of a sum of the squares of $s$ independent standard normal Gaussian random variables
$$Z = \sum_{i =1}^{s} \left(\sum_{j = 1}^m S_{ij}^T \frac{v_j}{\|v\|_2^2}\right)^2 \sim \sum_{i =1}^{s} Z_i^2.$$

So, for any $i = 1, \ldots, s$ a random variable $Z_i^2$ has a chi-squared distribution with one degree of freedom. A direct computation involving the moment generating function of $\chi^2$ shows that for any $\alpha < 1$, the function $I(\alpha)$ defined by~\eqref{i-function} is
$$
I(\alpha) = \frac{\alpha - 1 + \ln(\alpha^{-1})}{2}.
$$
Therefore, Cram\'er's Theorem~\ref{cramer} with $s \ge 1$ gives 
$$
 \P\left( \frac{\|S^T v\|_2^2}{\|v\|^2} \le \frac{s}{10}\right) \le \exp\left(- \frac{s (\ln(10) - 0.9)}{2}\right) \le \frac{1}{2}.
 $$
\end{proof}

We next turn to bounding the error in one iteration by the error in the previous iteration.

\begin{proposition}\label{main2} Suppose $A$ is a $m \times n$ matrix with full column rank ($m \ge n$) and let $x_*$ be a solution of the system $Ax = b$.  Let $x_k$ be a fixed vector in $\R^n$. Let $S$ be a $m \times s$ matrix with i.i.d. standard normal entries and $x_{k+1}$ is obtained by iteration \eqref{iter}. Then, 

$$
\E_S\|x_{k+1} - x_*\|_2^2 \le \beta \|x_k - x_*\|_2^2,
$$
where
\begin{equation}\label{beta2}
\beta =  1 - \frac{s\sigma_{min}^2(A)}{80(\sqrt{s} \|A\| + \|A\|_F)^2}
\end{equation}
\end{proposition}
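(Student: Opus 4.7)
The plan is to mirror the structure of the proof of Proposition~\ref{main1}, but to replace the crude bound $\|S^T A\| \le \|S^T\|\|A\|$ used there with the sharper estimate from Lemma~\ref{gordon} and to avoid the conditioning-on-a-good-event step via a Cauchy--Schwarz trick. As in Proposition~\ref{main1}, the identity $x_{k+1}-x_* = (\Id - A_S^{\dagger} A_S)(x_k-x_*)$ together with orthogonality of the projector $A_S^{\dagger} A_S$ reduces the claim to showing
\[
\E\|A_S^{\dagger} A_S u\|_2^2 \;\ge\; \frac{s\sigma_{min}^2(A)}{80(\sqrt{s}\|A\|+\|A\|_F)^2}
\]
for every fixed unit vector $u\in\mathds{S}^{n-1}$. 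A short singular value decomposition argument gives the almost sure pointwise inequality $\|A_S^{\dagger} A_S u\|_2^2 \ge \|A_S u\|_2^2/\|A_S\|^2$ (if $p := A_S^{\dagger} A_S u$, then $A_S p = A_S u$, so $\|A_S u\|_2 \le \|A_S\|\,\|p\|_2$), so it suffices to lower bound $\E\bigl[\|A_S u\|_2^2/\|A_S\|^2\bigr]$.

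The key new step is the Cauchy--Schwarz inequality applied to $X=\|A_S u\|_2/\|A_S\|$ and $Y=\|A_S\|$: since $XY = \|A_S u\|_2$, the inequality $(\E XY)^2 \le \E X^2 \cdot \E Y^2$ gives
\[
\E\!\left[\frac{\|A_S u\|_2^2}{\|A_S\|^2}\right] \;\ge\; \frac{(\E\|A_S u\|_2)^2}{\E\|A_S\|^2}.
\]
This sidesteps the need to control both quantities on the same high-probability event: a naive approach combining Lemma~\ref{small_ball} (which only delivers probability $1/2$) with a Markov tail from Lemma~\ref{gordon} leaves too little slack to recover a useful joint event.

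For the numerator, Lemma~\ref{small_ball} with $v=Au$ together with the trivial bound $\E Z \ge a\cdot\P(Z\ge a)$ give
\[
(\E\|A_S u\|_2)^2 \;\ge\; \frac{s\|Au\|_2^2}{40} \;\ge\; \frac{s\sigma_{min}^2(A)}{40}.
\]
For the denominator, Lemma~\ref{gordon} provides only the first-moment bound $\E\|A_S\| \le \sqrt{s}\|A\|+\|A\|_F$, which I would upgrade to a second-moment bound by Gaussian concentration: the map $S\mapsto\|S^TA\|$ is $\|A\|$-Lipschitz on $\R^{m\times s}$ (in the Frobenius norm), so the Gaussian Poincar\'e inequality yields $\Var(\|A_S\|)\le \|A\|^2$ and hence
\[
\E\|A_S\|^2 \;\le\; (\sqrt{s}\|A\|+\|A\|_F)^2 + \|A\|^2 \;\le\; 2(\sqrt{s}\|A\|+\|A\|_F)^2.
\]
Plugging both estimates into the Cauchy--Schwarz lower bound produces exactly the claimed constant $80$.

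The main technical obstacle is the combination step. The ``good event'' strategy of Proposition~\ref{main1} cannot be carried through verbatim, because Lemma~\ref{small_ball} is only a constant-probability statement and any Markov-based tail for $\|A_S\|$ leaves no useful room for a joint event. The Cauchy--Schwarz reversal is what turns these two marginal bounds into a usable estimate; the secondary subtlety is recognizing that $S\mapsto\|S^TA\|$ has Lipschitz constant $\|A\|$ (rather than the crude $\|A\|_F$), which is precisely what makes the second-moment upgrade tight enough to preserve the constant $80$ in the stated convergence rate.
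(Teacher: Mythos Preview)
Your argument is correct and lands on the same constant $80$, but the route differs from the paper's. Both proofs reduce to lower-bounding $\E\bigl[\|A_Su\|_2^2/\|A_S\|^2\bigr]$; the paper decouples this ratio by conditioning on $\EE_\gamma=\{\|A_Su\|_2^2\ge\gamma^2\}$, applying Jensen to $x\mapsto x^{-2}$ to pass from $\E[\|A_S\|^{-2}\mid\EE_\gamma]$ to $(\E[\|A_S\|\mid\EE_\gamma])^{-2}$, and then bounding the conditional first moment by the unconditional one at the cost of a factor $\P(\EE_\gamma)^{-1}$, which produces the $\gamma^2\P(\EE_\gamma)^3/(\E\|A_S\|)^2$ expression; Lemma~\ref{small_ball} then supplies $\gamma^2=s\|Au\|_2^2/10$ with $\P(\EE_\gamma)\ge 1/2$. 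Your Cauchy--Schwarz decoupling $(\E XY)^2\le \E X^2\,\E Y^2$ with $X=\|A_Su\|_2/\|A_S\|$, $Y=\|A_S\|$ is cleaner and sidesteps the conditioning entirely, but it requires a second-moment bound on $\|A_S\|$, which you obtain via the Gaussian Poincar\'e inequality --- an extra (standard) ingredient the paper never invokes. So the paper stays strictly within Lemmas~\ref{gordon} and~\ref{small_ball} at the price of the somewhat awkward $\P^3$ bookkeeping, while your version trades that bookkeeping for one additional concentration tool; both yield the identical rate.
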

\begin{proof}
As 
$$
\E \|(\Id - A_S^{\dagger}A_S)u\|_2^2 = \|u\|_2^2 - \E \|A_S^{\dagger}A_Su\|_2^2
$$
(see \eqref{main_2} and above), it is enough to show that for any fixed $u \in \mathds{S}^{n-1}$
\begin{equation}\label{eq:main2}
\E \|A_S^{\dagger}A_Su\|_2^2 \ge \frac{s \sigma^2_{min}(A)}{80(\sqrt{s} \|A\| + \|A\|_F)^2}.
\end{equation}
We apply the total expectation theorem (just like in the proof of Proposition~\ref{main1}, but this time we are conditioning on the norm of $\|A_Su\|_2$). For any parameter $\gamma^2 > 0$,
\begin{align}\label{eq:main2_2}
\E\|A_S^{\dagger}A_Su\|_2^2 &= \E\left(\big\|A_S^{\dagger}\frac{A_Su}{\|A_Su\|^2_2}\big\|_2^2 \cdot\|A_Su\|_2\right) \nonumber\\
&\ge \E\left(\big\|A_S^{\dagger}\frac{A_Su}{\|A_Su\|_2}\big\|_2^2 \|A_Su\|^2_2 \,\biggm\vert \, \|A_Su\|^2_2\ge \gamma^2\right) \cdot \P\big(\|A_Su\|^2_2 \ge \gamma^2\big) \nonumber\\
&\ge \gamma^2 \E(\inf_{v \in \mathds{S}^{n-1}}\|A_S^{\dagger}v\|_2^2 \, \big| \, \EE_{\gamma}) \P(\EE_{\gamma})
\end{align}
where the event $\EE_{\gamma} := \{\|A_Su\|^2_2 \ge \gamma^2\}$. Furthermore,
\begin{align*}
\E\big(\inf_{v \in \mathds{S}^{n-1}}\|A_S^{\dagger}v\|_2^2 \, \big| \, \EE_{\gamma}\big) &=  \E\left((\sup\limits_{v \in \mathds{S}^{n-1}}\|A_Sv\|_2^2)^{-1} \, \biggm\vert \, \EE_{\gamma}\right) \\
&= \E\left((\sup\limits_{v \in \mathds{S}^{n-1}}\|A_Sv\|_2)^{-2} \, \biggm\vert \, \EE_{\gamma}\right) \\
&\ge \E^{-2}\left(\sup\limits_{v \in \mathds{S}^{n-1}}\|A_Sv\|_2 \, \biggm\vert \, \EE_{\gamma}\right),
\end{align*}
since $f(x) = x^2$ is a monotone function on $x \ge 0$ (and so $\sup( \|.\|^2) = \sup^2 \|.\|$) and by Jensen's inequality applied to a convex function $g(x) = x^{-2}$. To estimate the denominator from above, we can use the total probability theorem again, namely, for any event $\EE_{\gamma}$
$$
\E (\sup\limits_{v \in \mathds{S}^{n-1}}\|A_Sv\|_2 \, \big| \, \EE_{\gamma}) \le \frac{\E \sup_{v \in \mathds{S}^{n-1}}\|A_Sv\|_2 }{\P(\EE_{\gamma})}.
$$
Finally, $\E\sup\limits_{v \in \mathds{S}^{n-1}}\|A_Sv\|_2$ can be estimated by Lemma~\ref{gordon} as
$$ 
\E \sup\limits_{w \in A\mathds{S}^{n-1}}\|S^T w\|_2 \le \sqrt{s} \|A\| + C \|A\|_F.
$$

Combining the last two estimates with \eqref{eq:main2_2}, we obtain
$$
\E\|A_S^{\dagger}A_Su\|_2^2 \ge \frac{ \gamma^2\P^3(\EE_{\gamma})}{ (\sqrt{s} \|A\| + \|A\|_F)^2}.
$$
The numerator can be estimated by the Lemma~\ref{small_ball} if we take $v = Au$ and $\gamma^2 =  s\|Au\|_2^2/10$:
\begin{align*}
\gamma^2 \cdot \P^3(\|A_Su\|^2_2 \ge \gamma^2) &\ge \frac{s \|Au\|_2^2}{10} \cdot \P^3(s\|A_Su\|^2\ge \frac{s\|Au\|_2^2}{10}) \\
&\ge\frac{\|Au\|_2^2 s}{10}\cdot\frac{1}{2^3} \ge \frac{s \sigma_{min}^2(A)}{80}.
\end{align*}
So,
$$
\E\|A_S^{\dagger}A_Su\|_2^2 \ge \frac{ \gamma^2\P^3(\|A_Su\|^2_2 \ge \gamma^2)}{ L^2_{\|A\|, \|A\|_F}} \ge \frac{\sigma_{min}^2(A) s}{80L^2_{\|A\|, \|A\|_F}},
$$
where $L_{\|A\|, \|A\|_F} = \sqrt{s} \|A\| + \|A\|_F$. This concludes the proof of Proposition~\ref{main2}.

\end{proof}

{\bfseries Proof of Theorem~\ref{theor2}.} We have
\begin{align*}
\E\|x_{k} - &x_*\|_2^2  = \E_{S_1} \E_{S_2}  \ldots \E_{S_k} \|x_{k} - x_*\|_2^2 \\
&\le \left[ 1 - \frac{s\sigma_{min}^2(A)}{80(\sqrt{s} \|A\| + \|A\|_F)^2}\right]^k \|x_0- x_*\|_2^2.
\end{align*}
Here, $\E_{S_1}, \ldots, \E_{S_k}$ refer to the randomness of choosing a matrix $S_i$ (independent from all Gaussian sketches $S_1, \ldots, S_{i-1}$ that were used during previous steps). The last inequality is guaranteed by Proposition~\ref{main2}. This concludes the proof of Theorem~\ref{theor2}.

\bigskip

\subsection{Proof of Theorem~\ref{theor3}, finite block Gaussian case}\label{proof3}

The following lemma is a direct corollary of the standard Bernstein's inequality for sub-exponential random variables (see e.g., \cite[Corollary 2.8.3]{VerHDP}), specialized for the case of standard normal random variables:
\begin{lemma}\label{sub_exp_ber}  Let $X_1, \ldots, X_n$ be independent $N(0,1)$ random variables. 

\begin{enumerate}[a)]

\item Then, for every $t \ge 0$, we have
$$
\P\left(\big|\sum_{i = 1}^n (X_i^2 - 1) \big| \ge t \right) \le 2 e^{- \min\{t^2/(8n), \, t/6 \}}.
$$
\item Let $Y_1, \ldots, Y_n$ be independent $N(0,1)$ random variables, also independent with all $X_i$, $i = 1, \ldots n$. Then, for every $t \ge 0$, we have
$$
\P\left(\big|\sum_{i = 1}^n X_i Y_i \big| \ge t \right) \le 2 e^{- \min\{t^2/(4n), \, 2t/5 \}}.
$$
\end{enumerate}
\end{lemma}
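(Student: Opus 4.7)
The plan is to invoke the standard Bernstein-type concentration inequality for sums of independent, centered, sub-exponential random variables (the cited Corollary~2.8.3 in \cite{VerHDP}), specialized to the two families at hand via direct moment generating function (MGF) estimates that will pin down the displayed absolute constants.

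For part (a), set $Z_i := X_i^2 - 1$; these are i.i.d.\ and mean zero since $\E X_i^2 = 1$. A direct integral computation gives the MGF of a centered chi-squared variable with one degree of freedom: for $\lambda \in (0, 1/2)$,
\begin{equation*}
\E e^{\lambda Z_i} = \frac{e^{-\lambda}}{\sqrt{1 - 2\lambda}} \le \exp\!\left(\frac{\lambda^2}{1-2\lambda}\right),
\end{equation*}
the inequality arising from a term-wise comparison of the Taylor series for $-\lambda - \tfrac{1}{2}\log(1 - 2\lambda)$. By independence, $\E e^{\lambda \sum Z_i} \le \exp\!\bigl(n\lambda^2/(1-2\lambda)\bigr)$. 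Applying Chernoff and optimizing $\lambda$ in two regimes --- a sub-Gaussian regime for small $t$ (choosing $\lambda \sim t/n$, which yields a bound of the form $\exp(-ct^2/n)$) and a sub-exponential regime for large $t$ (choosing $\lambda$ bounded away from $1/2$, which yields $\exp(-ct)$) --- followed by the symmetric lower-tail estimate and a union bound, produces the two-sided inequality with the stated constants $8n$ and $6$.

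For part (b), set $W_i := X_i Y_i$; again $\E W_i = 0$ by independence. Conditioning on $Y_i$ and using $\E_X e^{\lambda X Y_i} = e^{\lambda^2 Y_i^2/2}$, a routine Gaussian integral gives
\begin{equation*}
\E e^{\lambda W_i} = \frac{1}{\sqrt{1 - \lambda^2}} \le \exp\!\left(\frac{\lambda^2}{2(1 - \lambda^2)}\right) \quad \text{for } |\lambda| < 1.
\end{equation*}
Taking the $n$-fold product and repeating the two-regime Chernoff optimization (with the boundary now at $\lambda = 1$) produces the claimed tail with constants $4n$ and $2/5$. An alternative, less computational route exploits the distributional identity $X_i Y_i \sim \tfrac{1}{2}(U_i^2 - V_i^2)$ for independent standard normals $U_i, V_i$, reducing part (b) to part (a) via the triangle inequality and a union bound, at the cost of slightly worse constants.

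The proof contains no real conceptual obstacle; the only care is in the constant-tracking during the two-regime Chernoff optimization, where one must verify that both the sub-Gaussian and sub-exponential branches are dominated by the specific exponents displayed in the lemma, and that the threshold splitting $t$ into the two regimes is consistent across both tails. Since the lemma is subsequently used only to obtain concentration with some absolute constants in the proof of Theorem~\ref{theor3} and related statements, any looser version of Bernstein would also serve the downstream arguments.
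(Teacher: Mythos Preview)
Your proposal is correct and takes essentially the same approach as the paper, which simply states the lemma as a direct corollary of Bernstein's inequality for sub-exponential random variables (Corollary~2.8.3 in \cite{VerHDP}) without further detail. You supply more than the paper does: explicit MGF bounds for $X_i^2-1$ and $X_iY_i$ and a sketch of the two-regime Chernoff optimization, which is precisely the content hiding behind the citation; your closing remark that the downstream application tolerates looser constants is also accurate.
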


We will utilize the following definition that characterizes when a set of sketching matrices is suitable for our needs.

\begin{definition}[Good collection]\label{good_set}
We will call a set $\mathcal{S} = \{S^{(1)}, \ldots, S^{(N)}\}$ of $m \times s$ real matrices ``good", if the following conditions hold:
\begin{enumerate}
\item all $S^{(k)} \in \mathcal{S}$ have bounded operator norm: 
$$\|S^{(k)}\|\le 3\sqrt{m};$$
\item for all pairs $(j,i) \ne (r, i) \in [m] \times [s]$: 
$$\left|\sum_{k = 1}^N S_{ji}^{(k)} S^{(k)}_{ri}\right| \le N/4m$$ 
(so, all the entries of a sampled matrix from the collection $\mathcal{S}$ are empirically approximately independent);
\item for any $(j,i) \in [m] \times [s]$
$$\left|\sum_{k=1}^N (S_{ji}^{(k)})^2 - N \right| \le \frac{N}{2}$$
(so, all the entries of a sampled matrix from the collection $\mathcal{S}$ have not too small an empirical second moment).
\end{enumerate}
Here, $S_{ij}^{(k)}$ denotes the $(i,j)$-entry of the matrix $S^{(k)}$ and $N = |\mathcal{S}|$.
\end{definition}
Note that the conditions (2) and (3) from the Definition~\ref{good_set} imply that, if in the process of sampling entries of the matrices in the collection $\mathcal{S}$ (uniformly with replacement), the sample covariance matrix obtained would be reasonably close to the identity. 

Now we will check that a random collection of standard Gaussian matrices is likely a good collection.

\begin{proposition}\label{random_set_is_good}
 Let $S^{(1)}, \ldots, S^{(N)}$ be independent $m \times s$ real random matrices with i.i.d. standard normal entries, and $c > 3$ an arbitrary constant. For all cardinalities $N$ with $64cm^2\ln m \leq N \leq e^{m/3}$, with high probability $1 - 1.1m^{3-c}$, the collection $\mathcal{S} = \{S^{(1)}, \ldots, S^{(N)}\}$ is ``good" (in the sense of Definition~\ref{good_set}).
\end{proposition}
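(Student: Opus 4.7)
The plan is to verify the three defining conditions of Definition~\ref{good_set} one at a time, apply the appropriate concentration inequality to a single object (a matrix for (1), a scalar sum for (2) and (3)), and then take a union bound over all such objects. The upper bound $N \le e^{m/3}$ is used to absorb the union bound for condition (1), while the lower bound $N \ge 64cm^2\ln m$ is forced by condition (2), after accounting for the normalization $1/(4m)$ and the roughly $sm^2$ index pairs. For condition (1), Lemma~\ref{sup_norm} with $t=1$ gives $\P(\|S^{(k)}\| > 3\sqrt{m}) \le e^{-m/2}$ per matrix; a union bound over the $N$ matrices in the collection gives total failure at most $Ne^{-m/2} \le e^{-m/6}$, negligible compared to $m^{3-c}$ for large $m$.

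For condition (3), I fix $(j,i) \in [m]\times[s]$ and note that $\sum_{k=1}^N (S^{(k)}_{ji})^2$ is $\chi^2$-distributed with $N$ degrees of freedom; Lemma~\ref{sub_exp_ber}(a) applied with $t = N/2$ yields tail at most $2\exp(-\min\{N/32, N/12\}) = 2e^{-N/32}$. Since $N \ge 64cm^2\ln m$ makes the exponent at least $c\ln m$, a union bound over the at most $m^2$ entries $(j,i)$ (assuming $s\le m$) yields total failure at most $2m^{2-c}$. For condition (2), I fix an ordered pair $(j,i) \ne (r,i)$ with $j \ne r$; the two families $\{S^{(k)}_{ji}\}_k$ and $\{S^{(k)}_{ri}\}_k$ are independent i.i.d.\ $N(0,1)$ sequences, so Lemma~\ref{sub_exp_ber}(b) with $t = N/(4m)$ applies. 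The two quantities inside the $\min$ are $t^2/(4N) = N/(64m^2)$ and $2t/5 = N/(10m)$; for $m \ge 1$ the first is smaller, so the per-pair failure is at most $2e^{-N/(64m^2)} \le 2m^{-c}$ by the choice of $N$. The number of such pairs is at most $sm^2 \le m^3$ (assuming $s\le m$), giving total contribution at most $2m^{3-c}$.

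Summing the three contributions gives total failure probability at most $e^{-m/6} + 2m^{3-c} + 2m^{2-c} \le 1.1\, m^{3-c}$ for $m$ sufficiently large, completing the proof. The main ``obstacle'' here is really just bookkeeping: one has to identify which of the two Bernstein tails in Lemma~\ref{sub_exp_ber} dominates for each choice of $t$, and then line up the index counts so that the $1.1m^{3-c}$ probability and the $64cm^2\ln m$ threshold come out precisely as stated. No probabilistic ingredient beyond Lemmas~\ref{sup_norm} and~\ref{sub_exp_ber} is required.
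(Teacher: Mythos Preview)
Your argument is essentially identical to the paper's: the same three union bounds using Lemma~\ref{sup_norm} for condition (1) and the two parts of Lemma~\ref{sub_exp_ber} for conditions (2) and (3), with the same choices $t=1$, $t=N/(4m)$, and $t=N/2$. The one blemish is your final arithmetic: the contribution $2m^{3-c}$ from condition (2) already exceeds $1.1\,m^{3-c}$, so the stated total bound does not follow as written; the paper handles this by writing the condition-(2) contribution as $2m^2 s\,e^{-N/(64m^2)}\le m^{3-c}$ (implicitly using $2s\le m$, or simply not optimizing constants), and you should either tighten the pair count similarly or acknowledge that the constant $1.1$ is nominal.
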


\begin{proof}
Let us compute the probability that a random set of $N$ standard Gaussian matrices is not good, namely, at least one of the conditions of Definition~\ref{good_set} is violated.

By Lemma~\ref{sup_norm}, combined with the union bound,
$$
\P\big(\exists S \in \mathcal{S}: \, \|S\| > 3 \sqrt{m}\big) \le N\exp(-m/2).
$$
For any fixed indices $i \in [s]$, $j \in [m]$, and $r \in [m]$ such that $r \ne j,$ all the entries $S_{ji}^{(k)}$ and $S_{ri}^{(k)}$ for $k = 1, \ldots, N$ are mutually independent Gaussian random variables. Hence, by Lemma~\ref{sub_exp_ber} with $t = N/4m$,
$$
\P\left(\left| \sum_{k=1}^N S_{ji}^{(k)} S_{ri}^{(k)} \right| \ge \frac{N}{4m}\right) \le 2\exp(-N/64m^2).
$$
Taking a union bound over all pairs of indices $(j,i)$ and $(r,i)$, we get that (2) does not hold for for the collection $\mathcal{S}$ with probability at most $2m^2 s\exp(-N/64m^2)$. 

By Lemma~\ref{sub_exp_ber} with $t = N/2$,
\begin{align*}
\P\left(\sum_{k=1}^N (S_{ji}^{(k)})^2 \le \frac{N}{2}\right) \le \P\left(\left|\sum_{k=1}^N (S_{ji}^{(k)})^2 - N \right| \ge \frac{N}{2}\right) \le 2\exp(-N/32).
\end{align*}
Taking a union bound over all pairs of indices $(j, i) \in [m] \times [s]$, the probability that (3) does not hold for for $S$ is bounded by $2ms\exp(-N/32)$. Therefore, combining all three probabilities of the exceptional events, if $64cm^2 \ln m \le N \le \exp(m/3)$, for $c > 3$, the probability that a random collection $\mathcal{S}$ of cardinality $N$ is ``good" is at least
\begin{align*} 1 - 2m^2 s e^{-N/64m^2} - 2mse^{-N/32}- Ne^{-m/2} &\ge 1 - m^{3-c} - e^{-m} - e^{-m/6} \\
&\ge 1 - 1.1m^{3 - c}.
\end{align*}
This concludes the proof of Proposition~\ref{random_set_is_good}.
\end{proof}

Now we will prove that, given a good collection $\mathcal{S}$, the iterative process \eqref{iter} (choosing a sketch $S$ from $\mathcal{S}$ randomly at each iteration) enjoys the same convergence bound as the analogous process sampling a new Gaussian matrix at every step.
\begin{proposition}\label{main3} Suppose $A$ is a $m \times n$ matrix with full column rank ($m \ge n$) and let $x_*$ be a solution of the system $Ax = b$.  Let $x_k$ be a fixed vector in $\R^n$. Let $\mathcal{S} = \{S^{(1)}, \ldots, S^{(N)}\}$ be a ``good" set of $m \times s$ matrices (in the sense of Definition~\ref{good_set}). At every iteration we choose a random matrix $S$ uniformly at random from $\mathcal{S}$ (with replacement), and iterate according to \eqref{iter}. Then, 
$$
\E\|x_{k+1} - x_*\|_2^2 \le \left[ 1 - \frac{s}{36m \kappa^2(A)}\right] \|x_{k} - x_*\|_2^2.
$$
\end{proposition}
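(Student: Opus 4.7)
\textbf{Proof proposal for Proposition~\ref{main3}.} The plan is to mimic the strategy used in the proof of Proposition~\ref{main1}, but replacing the Gaussian expectation by uniform averaging over $\mathcal{S}$ and letting conditions (1)--(3) of Definition~\ref{good_set} play the roles that Lemma~\ref{sup_norm}, Lemma~\ref{exp_cond}, and the identity $\E\|A_Su\|_2^2 = s\|Au\|_2^2$ played before. First I would expand
$$ \E\|x_{k+1}-x_*\|_2^2 = \|x_k-x_*\|_2^2 - \E\|A_S^{\dagger}A_S(x_k-x_*)\|_2^2, $$
so that it suffices to show, for every fixed unit vector $u \in \mathds{S}^{n-1}$,
$$ \E\|A_S^{\dagger}A_S u\|_2^2 \ge \frac{s}{36 m \kappa^2(A)}. $$

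The first ingredient is deterministic: by condition (1) in the good-set definition, every $S \in \mathcal{S}$ satisfies $\|S^TA\| \le \|S^T\|\|A\| \le 3\sqrt{m}\|A\|$, so $\sigma_{\min}(A_S^{\dagger}) \ge 1/(3\sqrt{m}\|A\|)$ pointwise on $\mathcal{S}$. Consequently
$$ \E\|A_S^{\dagger}A_S u\|_2^2 \ge \E\bigl(\sigma_{\min}^2(A_S^{\dagger})\|A_S u\|_2^2\bigr) \ge \frac{1}{9m\|A\|^2}\cdot \E\|A_Su\|_2^2, $$
and no probabilistic conditioning (as in Lemma~\ref{exp_cond}) is needed.

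The main step, and the one I expect to carry all the content of the argument, is showing that the empirical second moment over the good collection still behaves like the true Gaussian expectation, namely
$$ \E\|A_Su\|_2^2 \ge \frac{s}{4}\|Au\|_2^2. $$
Setting $v := Au$ and expanding,
$$ \E\|A_Su\|_2^2 = \frac{1}{N}\sum_{k=1}^{N}\sum_{i=1}^{s}\Bigl(\sum_{j=1}^{m}S_{ji}^{(k)}v_j\Bigr)^2 = \sum_{i=1}^{s}\sum_{j,r=1}^{m} v_j v_r \Bigl(\frac{1}{N}\sum_{k=1}^{N}S_{ji}^{(k)}S_{ri}^{(k)}\Bigr). $$
For diagonal terms $j=r$, condition (3) gives $\frac{1}{N}\sum_k (S_{ji}^{(k)})^2 \ge 1/2$, producing a contribution of at least $\tfrac{s}{2}\|v\|_2^2$. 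For off-diagonal terms $j \ne r$, condition (2) bounds $|\frac{1}{N}\sum_k S_{ji}^{(k)}S_{ri}^{(k)}| \le 1/(4m)$, and I control the sum by $\|v\|_1^2 \le m\|v\|_2^2$ (Cauchy--Schwarz), yielding a loss of at most $\tfrac{s}{4m}\cdot m\|v\|_2^2 = \tfrac{s}{4}\|v\|_2^2$. Subtracting gives the desired $\tfrac{s}{4}\|v\|_2^2 \ge \tfrac{s}{4}\sigma_{\min}^2(A)$.

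Combining the two estimates,
$$ \E\|A_S^{\dagger}A_Su\|_2^2 \ge \frac{1}{9m\|A\|^2}\cdot\frac{s\sigma_{\min}^2(A)}{4} = \frac{s}{36m\kappa^2(A)}, $$
which plugs back into the one-step recursion and finishes the proof. The real obstacle is the off-diagonal accounting in the third step: the coarse pointwise bound $1/(4m)$ in condition (2) is exactly what is required so that, after summing $m^2-m$ cross terms weighted by $v_jv_r$, the off-diagonal error is at most half of the diagonal contribution; any looser control in the definition of a good collection would have broken this balance.
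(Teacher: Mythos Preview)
Your proposal is correct and follows essentially the same approach as the paper's proof: both reduce to the lower bound $\E\|A_S^{\dagger}A_Su\|_2^2 \ge s/(36m\kappa^2(A))$, use condition~(1) pointwise to extract the factor $1/(9m\|A\|^2)$, and then expand $\E\|A_Su\|_2^2$ into diagonal and off-diagonal parts handled by conditions~(3) and~(2) respectively. The only cosmetic difference is that you bound $\sum_{j\ne r}|v_jv_r| \le \|v\|_1^2 \le m\|v\|_2^2$ via Cauchy--Schwarz, whereas the paper uses the AM--GM inequality $|v_jv_r|\le\tfrac12(v_j^2+v_r^2)$ to reach the same bound $m\|v\|_2^2$.
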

\begin{proof}
As 
$$
\E \|(\Id - A_S^{\dagger}A_S)u\|_2^2 = \|u\|_2^2 - \E \|A_S^{\dagger}A_Su\|_2^2
$$
(see \eqref{main_2} and above), it is enough to show that for any fixed $u \in \mathds{S}^{n-1}$
\begin{equation}\label{eq:main3}
\E \|A_S^{\dagger}A_Su\|_2^2 \ge \frac{s}{36 m \kappa^2(A)},
\end{equation}
where expectation is taken over the random choices of $S \in \mathcal{S}$ and $A_S = S^TA$. Since $\mathcal{S}$ satisfies property (1) of Definition~\ref{good_set}, for any $S \in \mathcal{S}$ we have 
$$\sigma_{min}^2(\Ac) = \frac{1}{\sigma_{max}^2(S^TA)} \ge \frac{1}{\|S^T\|^2\|A\|^2} \ge \frac{1}{9m\|A\|^2}.$$ 
Thus,
\begin{align}\label{eq:main3-1}
\E\|A_S^{\dagger}A_Su\|_2^2 &\ge \E(\sigma_{min}^2(\Ac)\cdot \|A_S u\|_2^2) \nonumber\\
&\ge \frac{1}{9m\|A\|^2} \E( \|A_S u\|_2^2).
\end{align}
Now we are going to estimate the expectation term. Let $v := Au$, $S_{ji}$ denote the $(j, i)$-element of a random matrix $S$, and $S^{(k)}_{ji}$ denotes the $(j, i)$-element of a fixed matrix $S^{(k)}$ (from the collection). Then
\begin{align}
\E \|A_S u\|_2^2 &= \E \sum_{i=1}^s \big(\sum_{j = 1}^n S_{ji} v_j\big)^2 \nonumber\\   
 &=  \sum_{i=1}^s \left[ \sum_{j = 1}^m \E (S_{ji}^2) v_j^2 + \sum_{j \ne r; j,r = 1}^m \E(S_{ji} S_{ri}) v_j v_r\right] \nonumber\\   
 &=  \sum_{i=1}^s \left[ \sum_{j = 1}^m \frac{1}{N} \sum_{k = 1}^N (S^{(k)}_{ji})^2 v_j^2 + \sum_{j \ne r; j,r = 1}^m\frac{1}{N} \sum_{k = 1}^N(S^{(k)}_{ji}S^{(k)}_{ri}) v_j v_r\right] \nonumber\\   
&\ge \sum_{i=1}^s \left[ \sum_{j = 1}^m  \frac{1}{N}\sum_{k = 1}^N (S^{(k)}_{ji})^2 v_j^2\right] - \sum_{i=1}^s \left[\sum_{j \ne r; j,r = 1}^m \frac{1}{N} \big| \sum_{k = 1}^N S^{(k)}_{ji}S^{(k)}_{ri} \big| \cdot |v_j v_r| \right]  \label{1}\\   
 &\ge \sum_{i=1}^s \left[ \sum_{j = 1}^m  \frac{1}{2} v_j^2\right] - \sum_{i=1}^s \left[\sum_{j \ne r; j,r = 1}^m \frac{1}{4m}|v_j v_r|\right]  \ge \frac{s}{4}\|v\|^2. \label{2}
\end{align}
In \eqref{1}, we used the properties (2) and (3) of Definition~\ref{good_set}, and the last line holds since
$$
\sum_{j \ne r} |v_j v_r| \le 0.5\sum_{j \ne r} (v_j^2 + v_r^2) \le m \|v\|_2^2.
$$
Now recall that $\|v\|^2 = \|Au\|^2 \ge \sigma_{min}^2 (A)$. Combining \eqref{eq:main3-1} with \eqref{2} we conclude that
$$
\E\|A_S^{\dagger}A_Su\|_2^2 \ge \frac{1}{9m\|A\|^2} \frac{s}{4}\sigma_{min}^2 (A),
$$
which completes the proof.
\end{proof}

{\bfseries Proof of Theorem~\ref{theor3}.} Given the constraints on the size of the collection $N$, a random collection $\mathcal{S}$ of standard normal matrices will be a ``good" set (in the sense of Definition~\ref{good_set}) with probability at least $1 - 1.1m^{3-c}$ (by Proposition~\ref{random_set_is_good}). Conditioned on this high probability event, the iteration process will converge exponentially fast as given by the statement of Theorem~\ref{theor3}:
\begin{align*}
\E&(\|x_{k} - x_*\|_2^2| \, \mathcal{S} \text{ is good }) \\
&= \E_{S_1} \E_{S_2}  \ldots \E_{S_k} (\|x_{k} - x_*\|_2^2| \, \mathcal{S} \text{ is good }) \\
&\le \left[ 1 - \frac{s}{36m \kappa^2(A)}\right]^k \|x_{0} - x_*\|_2^2.
\end{align*}
Here, $\E_{S_1}, \ldots, \E_{S_k}$ refer to the randomness of choosing a matrix $S_i \in \mathcal{S}$ (due to the sampling with replacement, random matrices $S_1, S_2, \ldots$ are independent). The last inequality is guaranteed by Proposition~\ref{main3}. This concludes the proof of Theorem~\ref{theor3}.

\subsection{Proof of Theorems~\ref{theor4} for the inconsistent systems}\label{proof_with_error}
We again begin with an auxiliary lemma.
\begin{lemma}[Independence lemma]\label{independence_lemma}
Suppose $A$ is a $m \times n$ matrix with full column rank ($m \ge n$) and let $x_*$ be a solution of the least squares minimization problem~\eqref{minimization_problem}, and $e := Ax_* - b$. Let $S$ be an $m \times s$ random matrix with i.i.d. standard normal entries. Then two random variables $\|A_S^{\dagger}\|_2$ and $\|S^Te\|_2$ are independent.
\end{lemma}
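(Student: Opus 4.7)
The plan is to exploit the orthogonality structure built into the least-squares problem. Since $x_*$ minimizes $\|Ax-b\|_2^2$, the residual $e = Ax_* - b$ satisfies the normal equations $A^T e = 0$, i.e., $e$ is orthogonal to the column space of $A$. I would set $P$ to be the orthogonal projection onto $\mathrm{col}(A)$ and $P^\perp = \Id - P$, and then decompose the Gaussian matrix $S$ as $S = PS + P^\perp S$.

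The first step is to observe that $A_S = S^T A$ depends only on $PS$. Indeed, since every column of $A$ lies in $\mathrm{col}(A)$, we have $P^\perp A = 0$, so
\[
S^T A \;=\; (PS)^T A + (P^\perp S)^T A \;=\; (PS)^T A.
\]
Consequently $A_S^\dagger = (S^T A)^\dagger$, and hence $\|A_S^\dagger\|_2$, is a measurable function of $PS$ alone. The second step is the dual observation: because $Pe = 0$ (by $A^T e = 0$), we get
\[
S^T e \;=\; (PS)^T e + (P^\perp S)^T e \;=\; (P^\perp S)^T e,
\]
so $\|S^T e\|_2$ is a measurable function of $P^\perp S$ alone.

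The third and final ingredient is the classical fact that for a Gaussian matrix $S$ with i.i.d.\ $N(0,1)$ entries and any orthogonal projections $P, P^\perp$ onto complementary orthogonal subspaces of $\R^m$, the random matrices $PS$ and $P^\perp S$ are independent. This is applied column-by-column: each column of $S$ is a standard Gaussian vector in $\R^m$, and its orthogonal components with respect to $\mathrm{col}(A)$ and $\mathrm{col}(A)^\perp$ are jointly Gaussian with zero covariance, hence independent; independence across columns then lifts to independence of $PS$ and $P^\perp S$ as matrices. Combining this with Steps 1 and 2, $\|A_S^\dagger\|_2$ and $\|S^T e\|_2$ are functions of independent random objects and therefore independent.

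I do not anticipate a serious obstacle; the entire argument rests on identifying which piece of $S$ each quantity depends on, and the Gaussian rotational invariance does the rest. The only minor care needed is to state the ``Gaussian projection'' fact cleanly for matrices (not just vectors), and to note explicitly that $(S^T A)^\dagger$ is a Borel-measurable function of $S^T A$ so that the independence transfers to the norm of the pseudoinverse.
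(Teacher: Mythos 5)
Your proof is correct and follows essentially the same route as the paper: both arguments rest on the normal equations forcing $e \perp \Im(A)$, and on the fact that the components of a standard Gaussian matrix in two orthogonal complementary subspaces of $\R^m$ are independent. The paper phrases this by rotating into the SVD basis (writing $Q = S^TU$ and observing that $\|A_S^\dagger\|_2$ and $\|S^Te\|_2$ depend on disjoint column blocks of $Q$), whereas you use the projections $P$ and $P^\perp$ directly; these are the same decomposition in different coordinates.
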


\begin{proof}
Note that since $x_*$ is a minimizer for \eqref{minimization_problem}, we can assume without loss of generality that the error term $e$ is orthogonal to the image $\Im(A) := \{Ax, x \in \R^n\}$. Indeed, if $e = e_1 + Au$, where $\langle e_1, Au \rangle = 0$, then $\|Ax_{**} - s\|_2 \le \|A x_* - s\|$ for $x_{**} = x_* - u$. 

The orthogonality between the error vector $e$ and the image of $A$ implies that random variables $\|A_S^{\dagger}\|_2$ and $\|S^Te\|_2$ are independent. Indeed, it is straightforward to check that 
$$\|(S^TA)^{\dagger}\|_2 = f(Q_{[1:n]}) \quad \text{ and } \quad \|S^Te\|_2 = g(Q_{[n+1:m]}).$$ 
Here, $Q_{[a:b]}$ denotes the restriction on the range of columns $a, \ldots, b$ of the matrix $Q$, and $Q \in \R^{s \times m}$ is a standard Gaussian random matrix defined by $Q := S^TU$ (orthogonal matrix $U$ is defined by the singular value decomposition of $A = U \Sigma_A V^T$). Lemma~\ref{independence_lemma} is proved.
\end{proof}

\begin{proposition}\label{error_one_step_estimate_fin} Suppose $A$ is a $m \times n$ matrix with full column rank ($m \ge n$) and let $x_*$ be a solution of the least squares minimization problem~\eqref{minimization_problem}, and $e := Ax_* - b$.  Let $\mathcal{S} = \{S^{(1)}, \ldots, S^{(N)}\}$ be a ``good" set of $m \times s$ matrices (in the sense of Definition~\ref{good_set}) and additionally for all $S^{(k)} \in \mathcal{S}$
\begin{equation}\label{add_condition}
\sigma_{min}((U^TS^{(k)})_{[1:n,:]}) \ge (\sqrt{n} - \sqrt{s})/2 \text{ for } U: A = U\Sigma V^T \text{ is the SVD decomposition of matrix } A,
\end{equation}
then
$$
\E\|A_S^\dagger S^Te\|_2^2 \le \frac{8s\|e\|^2_2}{(\sqrt{n} - \sqrt{s})^2 \sigma_{min}^2(A)},
$$
where expectation is taken over a uniform draw of a matrix $S$ from the collection $\mathcal{S}$.\end{proposition}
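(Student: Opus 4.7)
The plan is to reduce the proposition to a \emph{deterministic} upper bound on $\|A_S^\dagger\|$ (uniform over every $S \in \mathcal{S}$) combined with a second-moment estimate on $\|S^T e\|_2^2$ that uses only the empirical covariance properties (2) and (3) of a ``good'' collection. Because the bound on $\|A_S^\dagger\|$ will be deterministic, there is no need to invoke the independence between $\|A_S^\dagger\|$ and $\|S^T e\|_2$ that is available in the fresh-Gaussian setting of Lemma~\ref{independence_lemma}; in the finite-collection model this independence is not automatic, and the extra hypothesis~\eqref{add_condition} is precisely what lets us sidestep it.

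For the first step, I would write the SVD $A = U\Sigma V^T$ and set $U_1 := U_{[:,1:n]}$, so that $S^T A = (S^T U_1)\Sigma_n V^T$, where $\Sigma_n$ is the $n \times n$ diagonal block of $\Sigma$. Since $V$ is orthogonal, the singular values of $S^T A$ coincide with those of $(S^T U_1)\Sigma_n$. Writing $B := S^T U_1$ and observing that $u^T B \Sigma_n^2 B^T u \ge \sigma_{min}^2(A)\,\|B^T u\|_2^2$ for any unit $u \in \R^s$ gives
\[
\sigma_{min}(S^T A) \;\ge\; \sigma_{min}(A)\,\sigma_{min}\bigl((U^T S)_{[1:n,:]}\bigr),
\]
since $(U^T S)_{[1:n,:]} = U_1^T S = B^T$. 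Combined with assumption~\eqref{add_condition}, this yields for every $S \in \mathcal{S}$ the deterministic estimate
\[
\|A_S^\dagger\|^2 \;=\; \frac{1}{\sigma_{min}^2(S^T A)} \;\le\; \frac{4}{(\sqrt{n} - \sqrt{s})^2 \sigma_{min}^2(A)}.
\]

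For the second step, the inequality $\|A_S^\dagger S^T e\|_2 \le \|A_S^\dagger\|\cdot\|S^T e\|_2$ together with the bound above reduces matters to showing $\E\|S^T e\|_2^2 \le 2s\|e\|_2^2$. Expanding entrywise,
\[
\E\|S^T e\|_2^2 \;=\; \sum_{i=1}^s \sum_{j,r = 1}^m e_j e_r\, \E[S_{ji} S_{ri}],
\]
where each mixed moment is computed as an empirical average over $\mathcal{S}$. By property~(3) of Definition~\ref{good_set}, $\E[S_{ji}^2] \le 3/2$; by property~(2), $|\E[S_{ji} S_{ri}]| \le 1/(4m)$ for $j \ne r$. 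Combined with the crude inequality $\sum_{j\ne r}|e_j e_r| \le m\|e\|_2^2$ (a consequence of $2|e_j e_r| \le e_j^2 + e_r^2$), this yields $\E\|S^T e\|_2^2 \le s(\tfrac{3}{2} + \tfrac{1}{4})\|e\|_2^2 \le 2s\|e\|_2^2$. Multiplying by the uniform bound on $\|A_S^\dagger\|^2$ produces the claimed estimate $\E\|A_S^\dagger S^T e\|_2^2 \le 8s\|e\|_2^2/[(\sqrt n-\sqrt s)^2\sigma_{min}^2(A)]$.

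The main obstacle is really the identification step: recognizing that~\eqref{add_condition} is exactly what converts the random factor $\|A_S^\dagger\|$ into a uniform constant, thereby decoupling it from $\|S^T e\|_2^2$ without needing an independence argument that may fail for a finite collection. Once this is in place, the residual work is a routine second-moment calculation that parallels the bound on $\E\|A_S u\|_2^2$ performed inside the proof of Proposition~\ref{main3}.
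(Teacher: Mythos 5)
Your proof is correct, reaches the stated bound with the identical constant, and its overall shape matches the paper's: the same factorization $\|A_S^\dagger S^Te\|_2 \le \|A_S^\dagger\|\,\|S^Te\|_2$, the same SVD reduction $\sigma_{min}(S^TA)\ge \sigma_{min}(A)\,\sigma_{min}\bigl((U^TS)_{[1:n,:]}\bigr)$ which converts \eqref{add_condition} into the factor $4/\bigl((\sqrt n-\sqrt s)^2\sigma_{min}^2(A)\bigr)$, and the same second-moment computation $\E\|S^Te\|_2^2\le 2s\|e\|_2^2$ from properties (2) and (3) of Definition~\ref{good_set} (the paper only sketches this step by pointing back to Proposition~\ref{main3}; you carry it out explicitly, getting $s(\tfrac32+\tfrac14)\|e\|_2^2$). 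The one genuine difference is how the two factors are decoupled: the paper invokes the Independence Lemma~\ref{independence_lemma} to write $\E\|A_S^\dagger S^Te\|_2^2\le\E\|A_S^\dagger\|_2^2\cdot\E\|S^Te\|_2^2$, whereas you observe that \eqref{add_condition} holds for \emph{every} member of $\mathcal{S}$, so $\|A_S^\dagger\|^2\le 4/\bigl((\sqrt n-\sqrt s)^2\sigma_{min}^2(A)\bigr)$ is a deterministic bound that can simply be pulled out of the expectation. Your route is arguably the cleaner one: Lemma~\ref{independence_lemma} is proved for a fresh Gaussian sketch, and independence of $\|A_S^\dagger\|$ and $\|S^Te\|_2$ under a uniform draw from a fixed finite collection does not follow from it. Since the paper's bound on $\E\|A_S^\dagger\|_2^2$ is in any case a deterministic bound in disguise, your version makes explicit that no independence argument is needed in the finite-collection setting, which slightly strengthens the exposition without changing the result.
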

\begin{proof} Since every matrix in $\mathcal{S}$ satisfies the conditions of the Independence Lemma~\ref{independence_lemma},
$$
\E\|A_S^\dagger S^Te\|_2^2 \le \E\|A_S^\dagger\|_2^2\cdot\E\| S^Te\|_2^2 \quad \text{ for any } S \in \mathcal{S}.
$$
Then, $\E\|S^Te\|_2^2 \le 2s\|e\|^2$ by the conditions (2) and (3) from the Definition~\ref{good_set} ensure that   (with the proof along the lines of the lower estimate for $\E \|A_S u\|_2^2$ in Proposition~\ref{main3}, changing signs in \eqref{1} and \eqref{2} and using the other side of the condition (3)). 

For the first multiple, let us note that
\begin{align*}
 \E\|A_S^\dagger\|_2^2 &= \E \sigma_{min}^{-2}(A^TS) = \E  \left((\inf\limits_{x \in \mathds{S}^{n-1}} \|\Sigma U^T S x\|_2^2)^{-1}\right) \\
 &\le \sigma_{min}^{-2}(A) \E\sigma_{min}^{-2}(U^TS) \le \frac{4}{(\sqrt{n} -  \sqrt{s})^2\sigma_{min}^{2}(A)}
\end{align*}
due to the additional condition~\eqref{add_condition}. This concludes the proof of Proposition~\ref{error_one_step_estimate_fin}.
\end{proof}

\bigskip

{\bfseries Proof of Theorem~\ref{theor4}.}
Let $U$ be a deterministic orthonormal matrix from the SVD decomposition of the matrix $A$ (namely, $A = U \Sigma V^T$). Then $(U^TS^{(k)})_{[1:n,:]}$ are i.i.d. standard normal matrices independent of each other (for $k = 1, \ldots, N$, where $S^{(k)} \in \mathcal{S}$). By Proposition~\ref{random_set_is_good} and the fact that 
\begin{equation}\label{smin_gaus}
\sqrt{n} - \sqrt{s} - t \le \sigma_{\min}(S) \quad \text{ with probability at least } 1 - \exp(-t^2/2)
\end{equation}
(see, e.g., \cite{DavSza}) for a standard normal $S$, a randomly drawn collection of Gaussian $m \times s$ matrices satisfy the extended definition of the ``good" collection (Definition~\ref{good_set} and condition~\eqref{add_condition}) with probability $1 - 1.1m^{3-c}  - \exp(-(\sqrt{n} - \sqrt{s})^2/16)$ if  $64cm^2\ln m \leq N \leq \exp((\sqrt{n} - \sqrt{s})^2/16)$.

Since $b = Ax_* - e$, we have the following relation after one iteration of \eqref{iter}:
$$
x_{k+1} - x_* = (\Id - A_S^{\dagger}A_S)(x_k - x_*) - A_S^{\dagger}S^Te,
$$
where $S$ is drawn from the collection $\mathcal{S}$ at random. Then, by the triangle inequality,
\begin{equation}\label{noisy_1}
  \E\|x_{k+1} - x_*\|_2^2  \le \E \|(I - A_S^{\dagger}A_S)(x_k - x_*)\|_2^2 + \E\|A_S^\dagger S^Te\|_2^2.
\end{equation}
Given the constraints on the size of the collection $N$ and the size of the sample, a random collection $\mathcal{S}$ of standard normal matrices will be a ``good" set and additionally satisfy the condition~\eqref{add_condition} with probability at least $1 - 1.1m^{3-c}$. Conditioned on this high probability event, the first term of \eqref{noisy_1} is bounded by Proposition~\ref{main3}, the second term is bounded by Lemma~\ref{error_one_step_estimate_fin}:
$$
\E_{S_1, \ldots, S_k}\|x_{k+1} - x_*\|_2^2 \le \beta \E_{S_1, \ldots, S_{k-1}} \|x_{k} - x_*\|_2^2 + \frac{8s\|e\|^2_2}{(\sqrt{n} - \sqrt{s})^2 \sigma_{min}^2(A)},
$$
where 
$$\beta = 1 - \frac{s}{36 m \kappa^2(A)}.$$
Recursively estimating $\E_{S_1, \ldots, S_{i}} \|x_{i} - x_*\|_2^2$ for $i = (k-1), \ldots, 1$, we get to 
\begin{align*}
\E_{S_1, \ldots, S_k}\|x_{k+1} - x_*\|_2^2 &\le \beta \|x_{0} - x_*\|_2^2 + \sum_{i = 0}^{k-1} \beta^k \frac{8s\|e\|^2_2}{(\sqrt{n} - \sqrt{s})^2 \sigma_{min}^2(A)} \\
&\le \beta \|x_{0} - x_*\|_2^2 + \frac{1}{1 - \beta} \frac{8s\|e\|^2_2}{(\sqrt{n} - \sqrt{s})^2 \sigma_{min}^2(A)}.
\end{align*}
Theorem~\ref{theor4} is thus proved.

\bigskip

To prove Theorem~\ref{theor5} we need the following fact about Gaussian random matrices:
\begin{lemma}\label{gaussian_inv}
If $X$ is an $n \times s$ random matrix with i.i.d. $N(0,1)$ entries, such that $s < \alpha n$ for some constant $\alpha < 1$, and $n$ is large enough, then
$$
\E\sigma_{min}^{-2}(X) \le \frac{20}{(\sqrt{n} - \sqrt{s})^2}.
$$
\end{lemma}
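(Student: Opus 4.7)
The plan is to split the expectation $\E \sigma_{\min}^{-2}(X)$ based on whether $\sigma_{\min}(X)$ lies above or below half of its typical value $\Delta := \sqrt{n} - \sqrt{s}$. Setting
\begin{equation*}
T_1 := \E\bigl(\sigma_{\min}^{-2}(X)\, \ind_{\{\sigma_{\min}(X) > \Delta/2\}}\bigr), \qquad T_2 := \E\bigl(\sigma_{\min}^{-2}(X)\, \ind_{\{\sigma_{\min}(X) \le \Delta/2\}}\bigr),
\end{equation*}
one immediately has $T_1 \le 4/\Delta^2$, which already accounts for essentially the entire target bound. The work is in showing that $T_2$ contributes only lower-order terms.

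For $T_2$ I would apply Cauchy--Schwarz to get $T_2 \le \sqrt{\E \sigma_{\min}^{-4}(X)} \cdot \sqrt{\P(\sigma_{\min}(X) \le \Delta/2)}$. The Davidson--Szarek concentration bound~\eqref{smin_gaus} applied with $t = \Delta/2$ yields $\P(\sigma_{\min}(X) \le \Delta/2) \le e^{-\Delta^2/8}$. To bound the fourth inverse moment, I would use the chain $\sigma_{\min}^{-4}(X) = \lambda_{\min}^{-2}(X^T X) \le (\mathrm{tr}((X^T X)^{-1}))^2 \le s \cdot \mathrm{tr}((X^T X)^{-2})$ (the last step is Cauchy--Schwarz applied to the eigenvalues of the Wishart matrix $X^TX$), and then invoke the standard Wishart identity $\E \mathrm{tr}((X^T X)^{-2}) = s(n-1)/[(n-s)(n-s-1)(n-s-3)]$, valid for $n > s + 3$. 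Under $s < \alpha n$ with $\alpha < 1$, this gives $\E \sigma_{\min}^{-4}(X) \le C(\alpha)$ for some constant depending only on $\alpha$.

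Combining the factors, $T_2 \le \sqrt{C(\alpha)} \cdot e^{-\Delta^2/16}$. Since $\Delta^2 \ge (1 - \sqrt{\alpha})^2 n$ under the hypothesis, $T_2$ decays exponentially in $n$, so for $n$ large enough one has $T_2 \le 16/\Delta^2$. Adding the bound on $T_1$ yields $\E \sigma_{\min}^{-2}(X) \le 20/\Delta^2$, matching the constant in the target.

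The main obstacle is producing a clean, explicit bound on $\E \sigma_{\min}^{-4}(X)$: the Wishart identity cited above is the most direct route but brings in a distributional fact not otherwise developed in the paper. A self-contained alternative is to compute $\E \sigma_{\min}^{-2}(X) = \int_0^\infty 2 u^{-3} \P(\sigma_{\min}(X) \le u)\,du$ directly, split the integral at $u = \Delta/2$, use Davidson--Szarek on the middle range, and tame the contribution near $u = 0$ with a small-ball bound of the form $\P(\sigma_{\min}(X) \le u) \le (C u)^{n - s + 1}$ (Szarek, Rudelson--Vershynin). This avoids quoting Wishart identities but requires additional bookkeeping that is harmless under $n - s = \Omega(n)$.
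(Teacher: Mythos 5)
Your proposal is correct, but it proceeds quite differently from the paper. The paper writes $\E\sigma_{\min}^{-2}(X)$ as a layer-cake integral $\int_0^\infty \P(\sigma_{\min}^{-2}(X)\ge t)\,dt$, truncates at $a=e^2/(\sqrt n-\sqrt s)^2$, and controls the tail with the polynomial small-ball bound $\P(\sigma_{\min}^2(X)<rn)<\Gamma(n-s+2)^{-1}(\sqrt r\,n)^{n-s+1}$ coming from the Wishart eigenvalue density (Chen--Dongarra), finishing with Stirling's formula; this yields the slightly sharper constant $2e^2$ and, as the paper remarks, in principle only needs $s\le n-3$ for convergence of the integral. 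You instead split the expectation on the event $\{\sigma_{\min}(X)>\Delta/2\}$, use the Davidson--Szarek deviation inequality \eqref{smin_gaus} (already quoted in the paper) to make the bad event exponentially rare, and absorb its contribution via Cauchy--Schwarz against a uniformly bounded fourth inverse moment, which you get from the inverse-Wishart identity $\E\,\mathrm{tr}\bigl((X^TX)^{-2}\bigr)=s(n-1)/[(n-s)(n-s-1)(n-s-3)]$. Both routes import one nontrivial Wishart fact (the small-eigenvalue density there, the second inverse moment here), and both need $n$ large and $s<\alpha n$ to kill the remainder; your decomposition is conceptually cleaner and makes the source of the constant $20$ transparent ($4$ from the good event, the rest swallowed by exponential decay), at the cost of a lossier Cauchy--Schwarz step and a harder dependence on $n-s=\Omega(n)$. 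Two small remarks: the detour $\lambda_{\min}^{-2}\le(\mathrm{tr}(W^{-1}))^2\le s\,\mathrm{tr}(W^{-2})$ is valid but wastes a factor of $s$, since $\lambda_{\min}^{-2}(W)\le\mathrm{tr}(W^{-2})$ directly; and the ``self-contained alternative'' you sketch at the end (split the integral, use a small-ball bound $\P(\sigma_{\min}\le u)\le(Cu)^{n-s+1}$ near zero) is essentially the paper's actual argument.
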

\begin{proof}
By definition of the expectation,
\begin{align}\label{s_min_eq}
\E\sigma_{min}^{-2}(X) &= \int_0^{\infty} \P(\sigma_{min}^{-2}(X)\ge t) dt = \int_0^{\infty} \P(\sigma^2_{min}(X) < 1/t) dt \nonumber\\
&\le a + \int_{a}^{\infty} \P(\sigma^2_{min}(X) < 1/t) dt
\end{align}
for any $a > 0$. We take $a = e^2/(\sqrt{n} - \sqrt{s})^2$. Now, with the change of variable $r = t^{-1}n^{-1}$ we can rewrite the second term as
$$
\int_{a}^{\infty} \P(\sigma^2_{min}(X) < 1/t) dt = \int_{0}^{1/an} \frac{\P(\sigma^2_{min}(X) < rn)}{r^2n} dr.
$$
To bound the probability term, we can now use that for any $r > 0$
$$
\P(\sigma^2_{min}(X) < rn) < \Gamma(n-s+2)^{-1} (\sqrt{r}n)^{n-s+1},
$$
which can be obtained as a direct computation using probability density function of the eigenvalues of a Wishart matrix (see, e.g., \cite[Lemma 4.1]{chen2005condition}). Hence, the second term in~\eqref{s_min_eq} can be upper bounded as
\begin{equation}\label{int}
\frac{n^{n-s+1}}{n\Gamma(n-s+2)}\int_0^{1/an} r^{\frac{n-s+1}{2} - 2} dr = \frac{n^{n-s}}{\Gamma(n-s+2)} \cdot \frac{r^{\frac{n-s-1}{2}}}{\frac{n-s-1}{2}} \big|^{1/{an}}_0
\end{equation}
Rewriting the last expression using Stirling's formula and the chosen value of $a$, we get the following estimate:
\begin{align*} 
\eqref{s_min_eq} &\le  \frac{e^2}{(\sqrt{n} - \sqrt{s})^2} + c C^{n-s}\frac{1}{(1 -\sqrt{s/n})(n - s - 1)(n-s)} \\
&\le \frac{e^2}{(\sqrt{n} - \sqrt{s})^2} \left[1 + \frac{\sqrt{2/\pi} C^{n-s}}{n (1 -\sqrt{s/n})(1 - s/n - 1/n)}\right] \le \frac{2e^2}{(\sqrt{n} - \sqrt{s})^2},
\end{align*}
where $c := \sqrt{\frac{2}{\pi}}e^2$ and $C := \frac{n(\sqrt{n} - \sqrt{s})}{(n -s + 1) \sqrt{n}}$.  In the second step we used that $(\sqrt{n} - \sqrt{s})^2 \le n - s$. Note that $C < 1$ for $n > s > 0$, and $s/n < \alpha$. Hence, the factor in parentheses is less than $2$ for $n$ large enough. Lemma~\ref{gaussian_inv} is proved.
\end{proof}
\begin{remark}
Note that the condition $s < n$ is necessary. As it was proved. in, e.g., \cite[Theorem 4.1]{edelman1989eigenvalues}, $\E\sigma_{min}^{-2}(X)  = \infty$ for the square matrices. However, for any $s \le n - 3$  the integral~\eqref{int} is convergent, so, some finite estimate in terms of $n$ and $s$ could be obtained. The dependence is expected to be much worse since the smallest singular value of an almost square random Gaussian matrix behaves as $n^{-1/2}$ (\cite{RV-rectangular,Feng}) instead of $n^{1/2}$ for a tall matrix (when $s \ll n$). 

For the case when $s/n \to const$ as $n \to \infty$ we obtain the best possible order of the expectation $\E\sigma_{min}^{-2}(X)$ (see also \cite[Proposition 5.1]{edelman1989eigenvalues}). The condition ``$n$ is large enough" is used only in the last estimate of Lemma~\ref{gaussian_inv}. For example, it is enough to have $n \ge 2\sqrt{2/\pi}/(1 - \sqrt{\alpha})(1 - \alpha)$ and $1/n \le (1 -\alpha)/2$.
\end{remark}

The next proposition is an analogue of Proposition~\ref{error_one_step_estimate_fin} for the case when $S$ is sampled from the Gaussian distribution on $\R^{m \times s}$ rather than from a finite collection:
\begin{proposition}\label{error_one_step_estimate} Suppose $A$ is a $m \times n$ matrix with full column rank ($m \ge n$) and let $x_*$ be a solution of the least squares minimization problem~\eqref{minimization_problem}, and $e := Ax_* - b$. Let $S$ be an $m \times s$ random matrix with i.i.d. standard normal entries. Then
$$
\E\|A_S^\dagger S^Te\|_2^2 \le \frac{20s\|e\|^2_2}{(\sqrt{n} - \sqrt{s})^2 \sigma_{min}^2(A)}.
$$
\end{proposition}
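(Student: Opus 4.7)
The plan is to combine the Independence Lemma (Lemma~\ref{independence_lemma}), the exact computation for the Gaussian vector $S^T e$, and the inverse-smallest-singular-value bound provided by Lemma~\ref{gaussian_inv}. The starting observation is that $\|A_S^\dagger S^T e\|_2 \le \|A_S^\dagger\|_2 \cdot \|S^T e\|_2$, so after squaring and taking expectations it suffices to bound
$$
\E\|A_S^\dagger S^T e\|_2^2 \le \E\bigl(\|A_S^\dagger\|_2^2 \, \|S^T e\|_2^2\bigr).
$$
By Lemma~\ref{independence_lemma}, $\|A_S^\dagger\|_2$ and $\|S^T e\|_2$ are independent, so the product of expectations splits, and the two factors can be estimated separately.

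For the factor $\E\|S^T e\|_2^2$, I would observe that $S^T e$ is an $s$-dimensional Gaussian vector with covariance $\|e\|_2^2 \, I_s$ (since the rows of $S^T$ are independent $N(0, I_m)$), hence $\E\|S^T e\|_2^2 = s\|e\|_2^2$.

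For the factor $\E\|A_S^\dagger\|_2^2 = \E\sigma_{\min}^{-2}(S^T A)$, I would use the singular value decomposition $A = U \Sigma V^T$ and write $S^T A = Q \Sigma V^T$ with $Q := S^T U \in \R^{s \times m}$, which again has i.i.d.\ $N(0,1)$ entries by rotational invariance. Since $V$ is orthogonal and $\Sigma$ is supported only on the first $n$ coordinates, it follows that
$$
\sigma_{\min}(S^T A) \ge \sigma_{\min}(A) \cdot \sigma_{\min}\bigl(Q_{[:,\, 1:n]}\bigr),
$$
where $Q_{[:,\, 1:n]}$ is an $s \times n$ matrix with i.i.d.\ standard normal entries. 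Transposing to an $n \times s$ matrix (which does not change the smallest singular value) and applying Lemma~\ref{gaussian_inv} yields
$$
\E\|A_S^\dagger\|_2^2 \le \frac{1}{\sigma_{\min}^2(A)} \cdot \frac{20}{(\sqrt n - \sqrt s)^2}.
$$

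Multiplying the two factors gives the stated bound. I do not anticipate any real obstacle in this argument: the independence structure is already isolated in Lemma~\ref{independence_lemma}, and the sharp moment bound on the inverse smallest singular value is exactly the content of Lemma~\ref{gaussian_inv}. The only minor point requiring care is verifying that the reduction via SVD preserves the Gaussian distribution and isolates an $n \times s$ (or equivalently $s \times n$) Gaussian block, so that the hypothesis $s < n$ of Lemma~\ref{gaussian_inv} is the one under which we are operating.
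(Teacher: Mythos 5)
Your proposal is correct and follows essentially the same route as the paper: split via the Independence Lemma, compute $\E\|S^Te\|_2^2 = s\|e\|_2^2$ exactly, and reduce $\E\|A_S^\dagger\|_2^2$ through the SVD to $\sigma_{\min}^{-2}(A)\,\E\sigma_{\min}^{-2}(U^TS)$, which is handled by Lemma~\ref{gaussian_inv}. The one point you flag — that the SVD reduction isolates an $n\times s$ standard Gaussian block so that the $s<n$ hypothesis of Lemma~\ref{gaussian_inv} applies — is exactly how the paper argues it as well.
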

\begin{proof} By the Independence Lemma~\ref{independence_lemma},
$$
\E\|A_S^\dagger S^Te\|_2^2 \le \E\|A_S^\dagger\|_2^2\cdot\E\| S^Te\|_2^2 =\E\|A_S^\dagger\|_2^2\cdot s\|e\|^2.
$$
Just like we estimated in Proposition~\ref{error_one_step_estimate_fin}
\begin{align*}
 \E\|A_S^\dagger\|_2^2 \le \sigma_{min}^{-2}(A) \E\sigma_{min}^{-2}(U^TS) \le \frac{20\sigma_{min}^{-2}(A)}{(\sqrt{n} -  \sqrt{s})^2}.
\end{align*}
Here, $U$ is an orthonormal matrix from the SVD decomposition of the matrix $A$ (i.e., $A = U \Sigma V^T$). Since orthogonal transformation of Gaussian matrix is another Gaussian matrix of the size $n \times s$ (as $A$ has full column rank), the last estimate is made by Lemma~\ref{gaussian_inv}.
This concludes the proof of Proposition~\ref{error_one_step_estimate}.
\end{proof}

{\bfseries Proof of Theorem~\ref{theor5}} follows along the lines of the proof of Theorem~\ref{theor4}, starting with~\eqref{noisy_1}, using Proposition~\ref{main2}  instead of Proposition~\ref{main3} and Proposition~\ref{error_one_step_estimate} instead of Proposition~\ref{error_one_step_estimate_fin}.

\section{Numerical experiments}\label{experiments} 
   
In this section, we present numerical experiments to complement our theoretical estimates of the performance of the Gaussian sketch and project methods. Everything was coded and run in MATLAB R2018b, on a 1.6GHz dual-core Intel Core i5, 8 GB 2133 MHz.

We consider two main models of matrices $A$: the first one is an incoherent Gaussian matrix with i.i.d. $N(0,1)$ elements (``Gaussian model"), the second one models a coherent matrix with almost co-linear rows, $A_{ij}\sim Unif[0.8,1]$ (``coherent model"). Unless otherwise stated, we consider matrices of size $m = 50000$ by $n = 500$. As mentioned earlier, all of the Kaczmarz methods can be described in terms of iteration \eqref{iter}. We run the iteration process until the fastest method reaches a relative error threshold (1e-4, unless otherwise stated), or maximal wait time, or specified maximal number of iterations. Relative error is defined as $\|x_k - x_*\|_2^2/\|x_*\|_2^2$. Time is always measured in seconds (cputime). We generate the solution of a system $x_*$ as a standard normal random vector (and define $b = Ax_*$), so we do not need to worry about the case when $\|x_*\|_2 = 0$. We use $x_0 = 0$ as an initial point.

The standard Kaczmarz method \eqref{standardRK} can be viewed as a sketched method \eqref{iter} with sketches $S = (0, \ldots, 0, 1, 0, \ldots, 0)^T$, where the position of $1$ is chosen randomly at each iteration. Block Kaczmarz \eqref{blockRK} uses ${m \times s}$ sketches 
$$S = (\text{zeros}(s, \text{shift}), I(s, s), \text{zeros}(s, m - \text{shift} - s))^T,$$ 
where zeros() denotes a matrix of all zeros, $I()$ the identity, and $s$ is the block size and $\text{shift}\,=\,sz$, $z \in \{1, 2, ..., \lfloor m/s\rfloor\}$ is selected randomly at each step. For the sake of efficiency, we realize these methods by selection of rows (or row blocks) for projection rather than by the sketching procedure described above.  Gaussian Kaczmarz (non-block) uses sketches $S = \xi$, where the vectors $\xi \in \R^m$ have $N(0,1)$ independent coordinates, and BGK uses sketches $S$, where the matrices $S \in \R^{m \times s}$ have $N(0,1)$ independent coordinates.

\subsection{Dependence on the size of the block} Theorem~\ref{theor1} suggests that the per-iteration convergence rate is accelerated when the block size $s$ increases. Indeed, the decay of the relative error with the iterations is faster for bigger $s = 5, 25, 50, 100, 250, 500$ (see Figure~\ref{fig:var_block_size}). Moreover, the same trend preserves when we look at the convergence rate in time (Figure~\ref{fig:var_block_size}, right); it is worth taking larger block sizes for faster convergence. It is not specific for the matrix model, in Figure~\ref{fig:time_to_error} we plot the average number of iterations needed to achieve $1e$-$4$ relative error (average is taken over $35$ runs) for both Gaussian and coherent models; they produce quite similar shapes. Shaded regions show the variation between maximal and minimal times over $35$ runs of the algorithm. 
 
\begin{figure}
\makebox[\columnwidth]{\includegraphics[width=0.5\columnwidth]{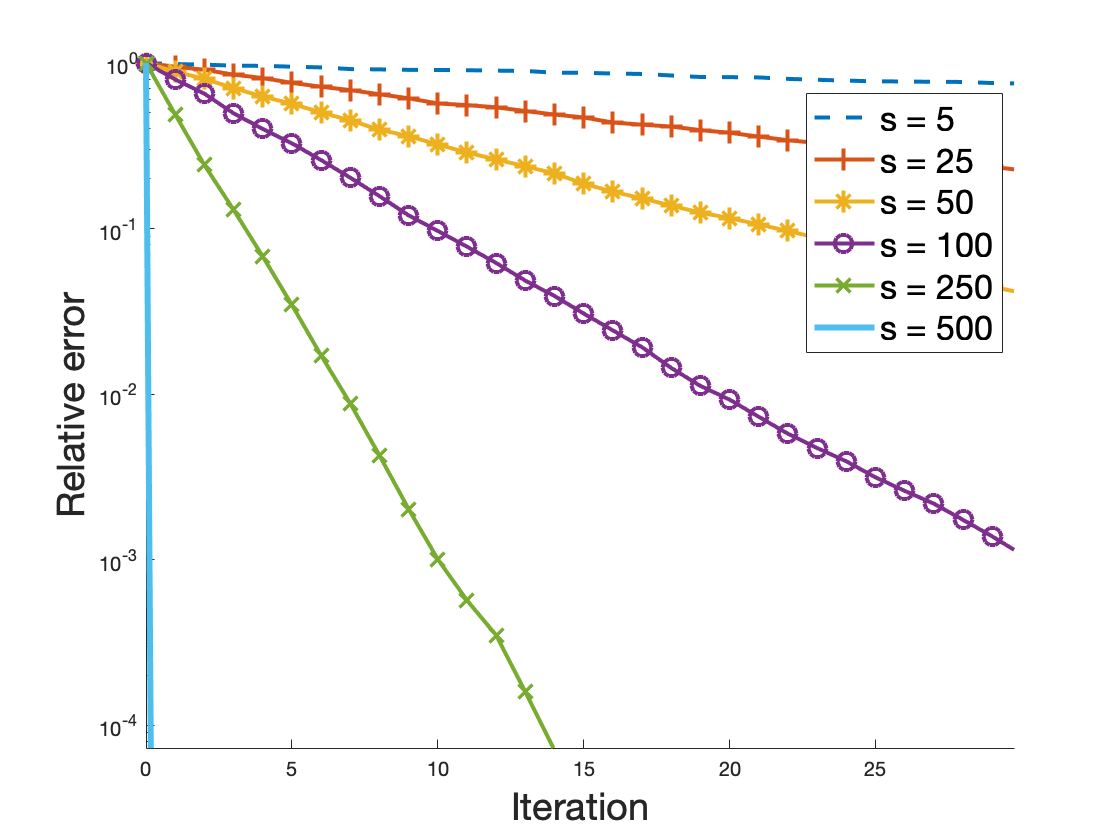} \includegraphics[width=0.5\columnwidth]{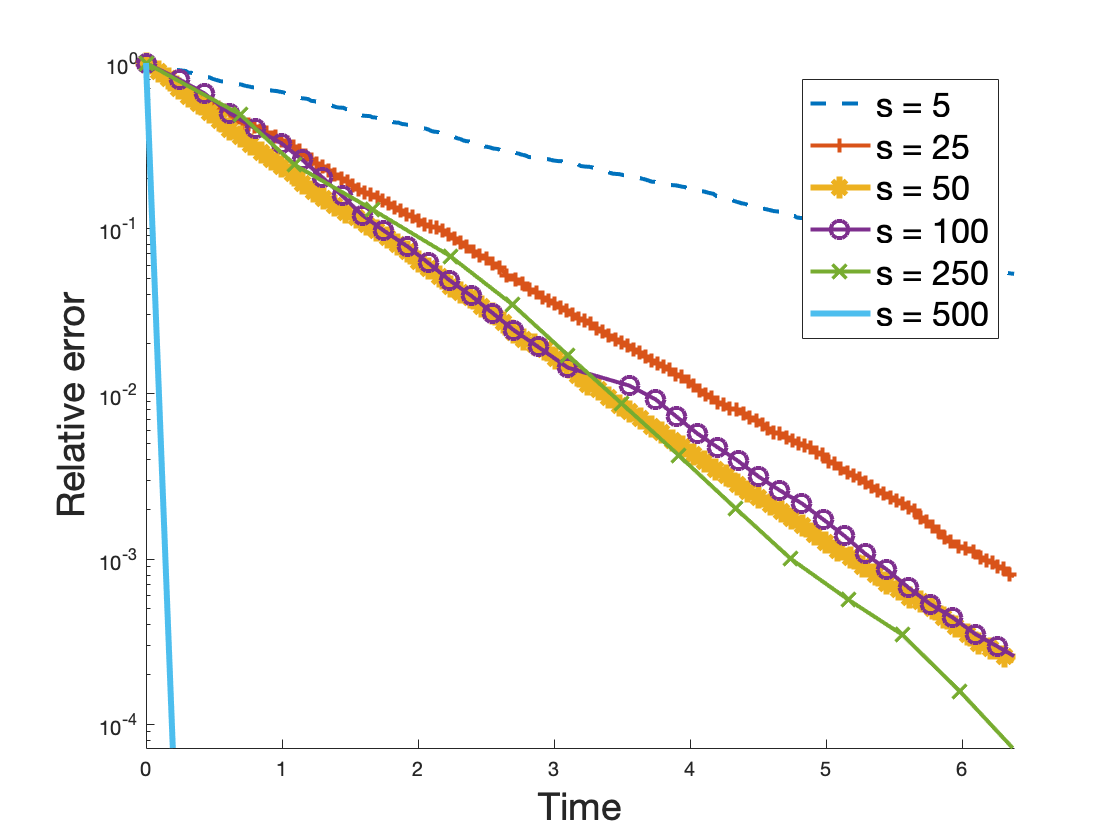}}
\caption{Gaussian model: iteration (left) and time (right) vs error for the varying block size $s$.}
\label{fig:var_block_size}
\end{figure}
\begin{figure}
\centering
\makebox[\columnwidth]{\includegraphics[width=0.5\columnwidth]{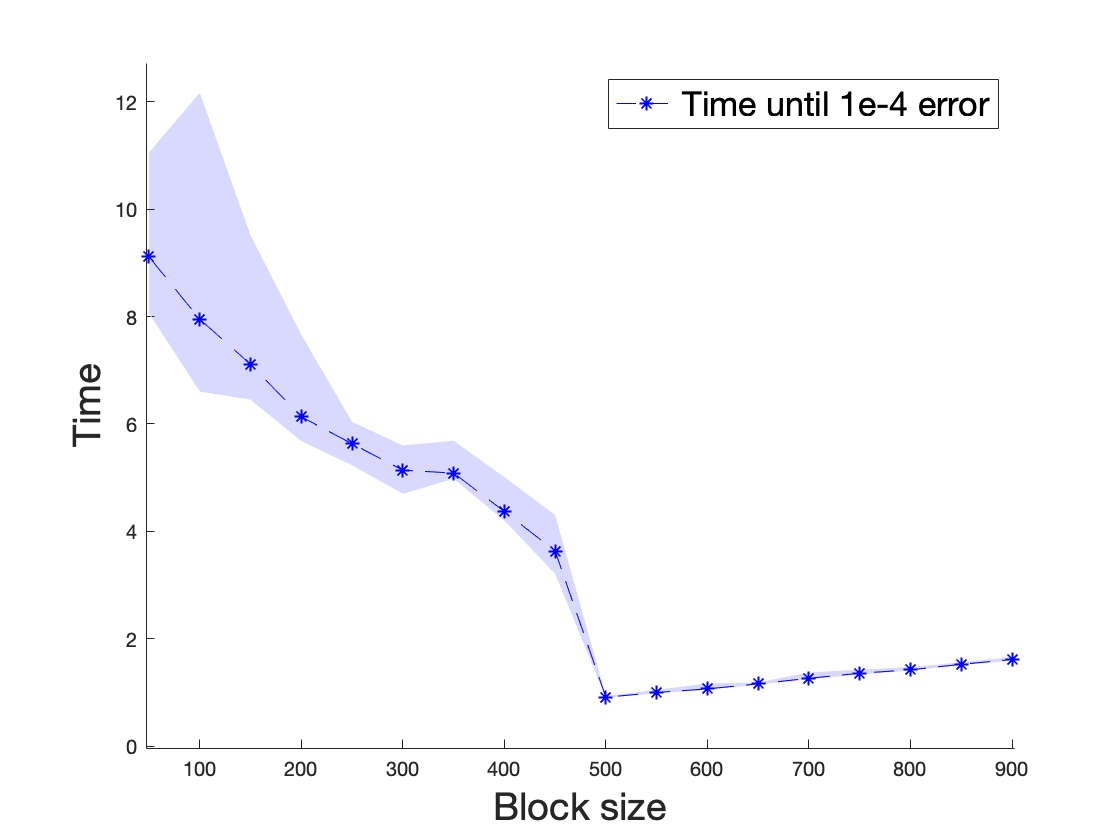} \includegraphics[width=0.5\columnwidth]{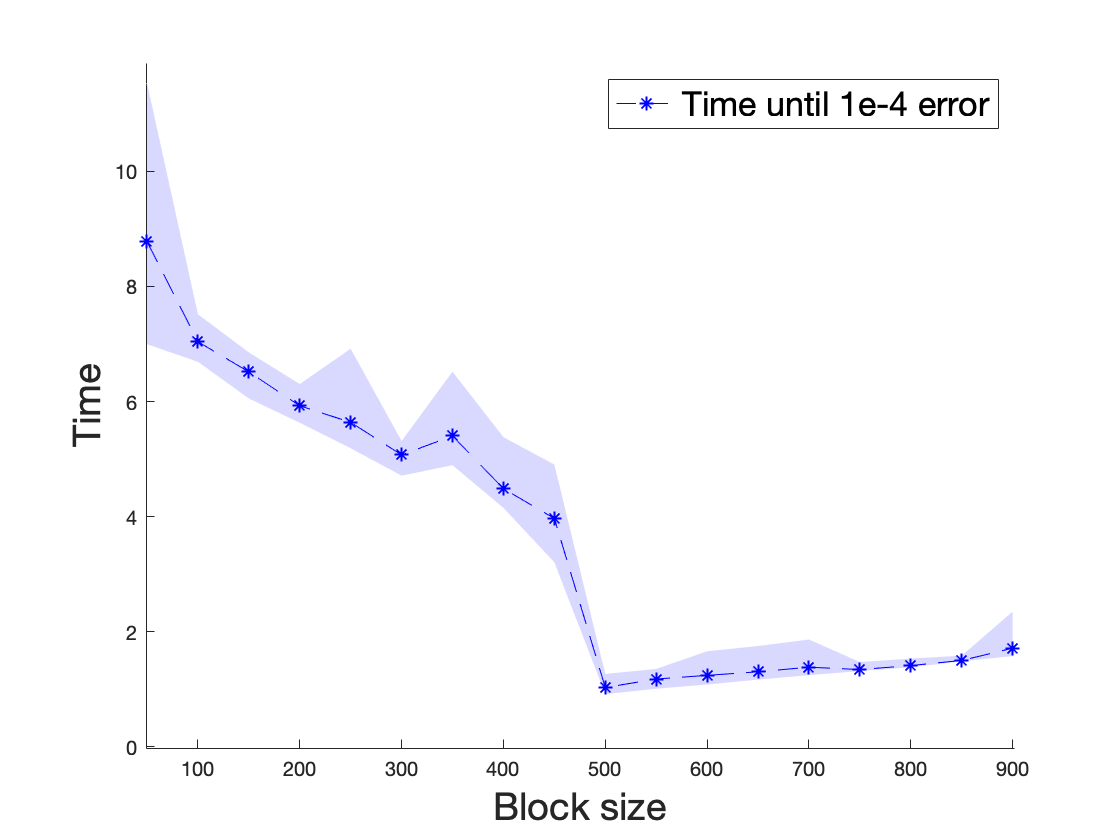}} 
\caption{Block Gaussian Kaczmarz performance on Gaussian (left) and coherent (right) models: block size vs average time until relative error reaches $1e$-$4$.}
\label{fig:time_to_error}
\end{figure}

\emph{In summary, bigger block sizes are beneficial for the BGK method. Note that since the dimension of the solution $x_k$ is $n = 500$ and $A$ has full column rank, for the block sizes larger or equal than $500$, the process converges in one iteration. So, it seems that with enough memory, but without any parallelism available, this trivial one-step version of the algorithm is the most practical.}
 
\subsection{Comparison with the other Kaczmarz methods}\label{comparison} Next, we compare rates of convergence of the Kaczmarz methods with and without Gaussian sketching. Figure~\ref{fig:method_comp_2} gives us two insights about the time performance of Gaussian and regular block Kaczmarz algorithms: (a) only in the case $s = 1$ Gaussian Kaczmarz outperforms the standard one per iteration, and (b) in time, the standard (not Gaussian) Kaczmarz method seems preferable for any block size. 

The latter observation, that Gaussian sketching typically does not seem practical comparing to the discrete randomized block sketching, is well explained by the fact that per-iteration performance is very similar in  both cases (see Figure~\ref{fig:method_comp_2}, right), but Gaussian sketching is essentially a pre-multiplication by the $s \times m$ matrix in each step, which makes iterations heavier (especially for large block sizes $s$).

\begin{figure}
\makebox[\columnwidth]{\includegraphics[width=0.5\columnwidth]{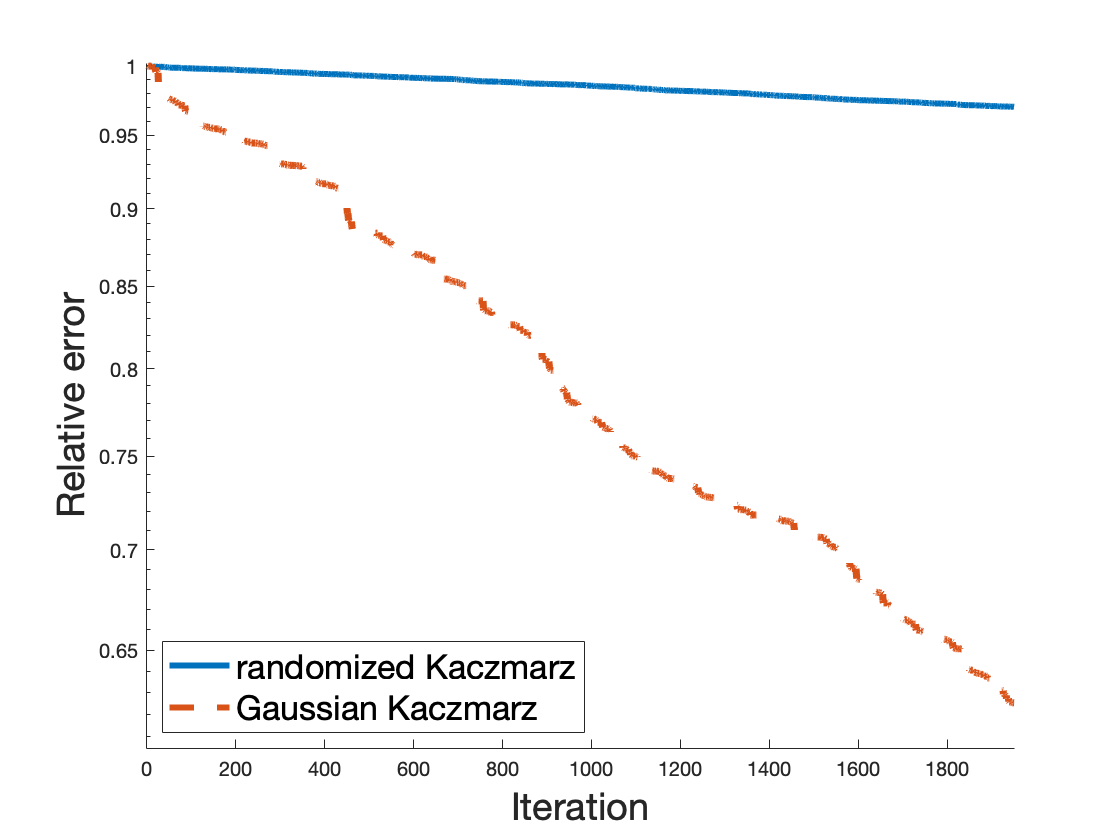} \includegraphics[width=0.5\columnwidth]{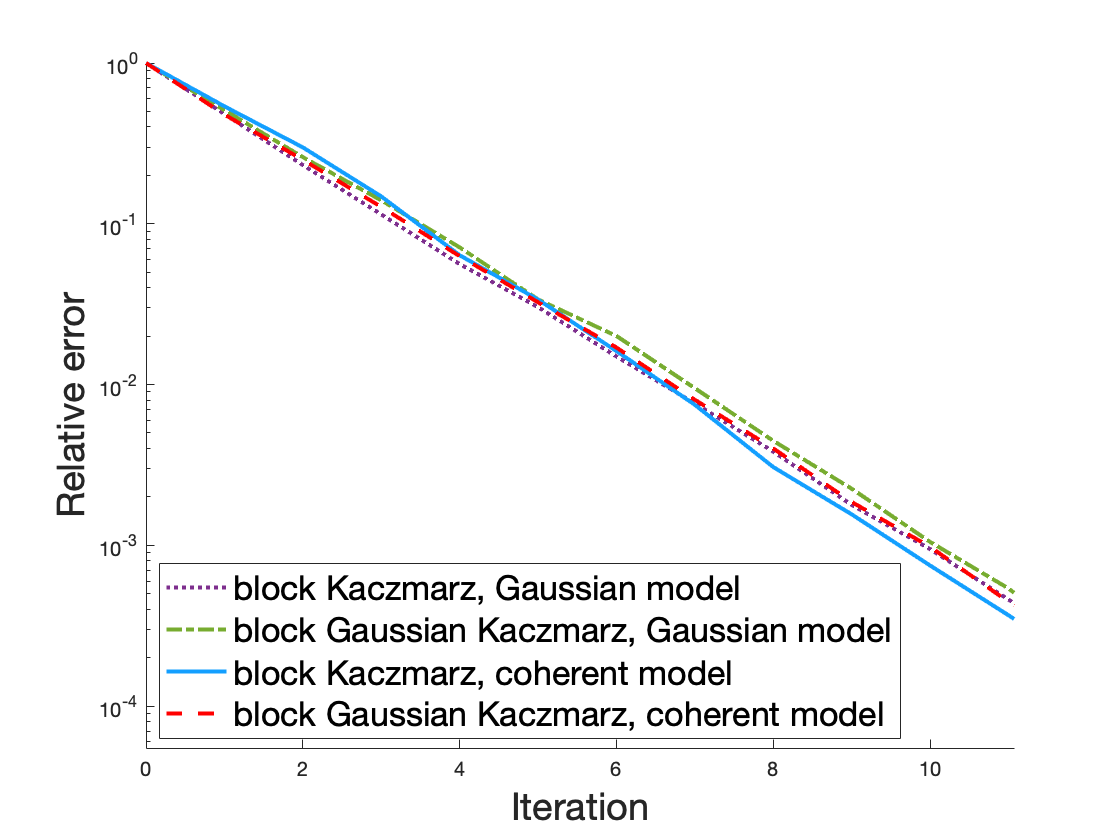} }
\caption{Left:  iteration vs relative error for $s=1$ (coherent model); right: iteration vs relative error for block methods with $s = 250$.}
\label{fig:method_comp_2}
\end{figure}
 
\begin{figure}
\makebox[\columnwidth]{\includegraphics[width=0.5\columnwidth]{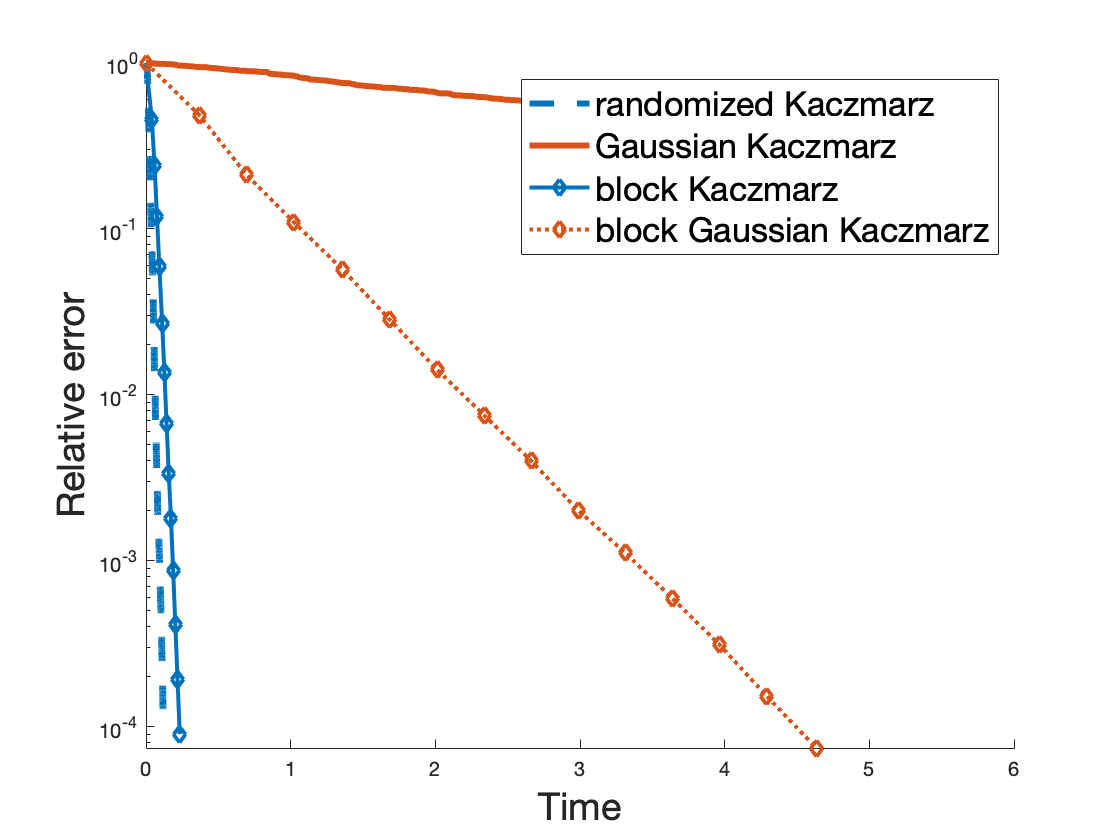} \includegraphics[width=0.5\columnwidth]{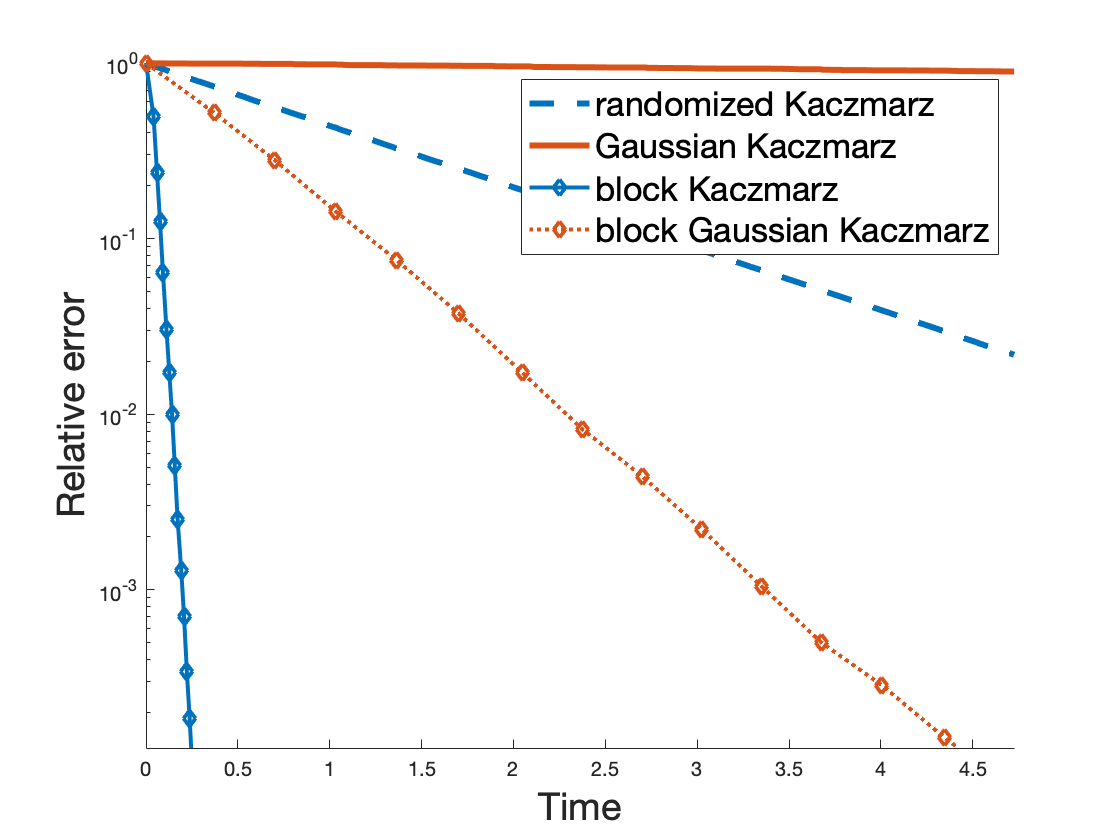} }
\caption{Gaussian (left) and coherent (right) models: decay of the relative error in time for $s=1$ (regular and Gaussian Kaczmarz) and $s=250$ (block methods). 
}
\label{fig:method_comp}
\end{figure}

The former observation, that one-dimensional Gaussian sketching is beneficial for the coherent model, can be heuristically understood geometrically in the following way. In the coherent model all the rows $A_i$ of the matrix $A$ are approximately co-linear, so iterative projections follow roughly the same direction and do not make much progress towards the solution $x_*$. Including Gaussian preprocessing, the new projection directions are $\xi_i A/\|\xi_i A\|_2$, where $\xi_1, \xi_2, \ldots$ are mutually independent Gaussian vectors, which are roughly pairwise orthogonal with high probability. Although the multiplication by $A$ creates non-trivial coherence between the projection directions, some of them still have bigger mutual angles than the angles between the row directions. Smaller scalar products $|\langle \xi_i A/\|\xi_i A\|_2, \xi_j A/\|\xi_j A\|_2\rangle|$ produce bigger convergence steps (see also Figure~\ref{fig:method_comp_2}, left). Quantitative understanding of this phenomenon is one of the interesting future directions of the current work. However, in practice, one would likely prefer to use a bigger block size and not use the Gaussian sketching step for the sake of better time performance (see Figure~\ref{fig:method_comp}). 

Although it may seem there is never a case when the BGK method is most practical, to be fair, one can actually construct examples where it is.  The ``mixed model" is a tall $50000\times 500$ matrix $A$ that contains  $500$ random independent standard normal rows, and all other rows are identical (in our experiments, repetitions of the first Gaussian row). Then, to solve the system $Ax = b$, the iterative method needs to ``find" $500$ different rows. Selection of random blocks of reasonable size $s \le n = 500$ is very inefficient in this search. In contrast, Gaussian sketching forces each iteration to use the information about the whole matrix $A$, thereby ``smoothing" the mixed structure of the matrix. As a result, we observe enough per-iteration gain so that even with the heavier computation, the Gaussian iterations converge faster (see Figure~\ref{fig:mixed}).

\begin{figure}
\makebox[\columnwidth]{\includegraphics[width=0.5\columnwidth]{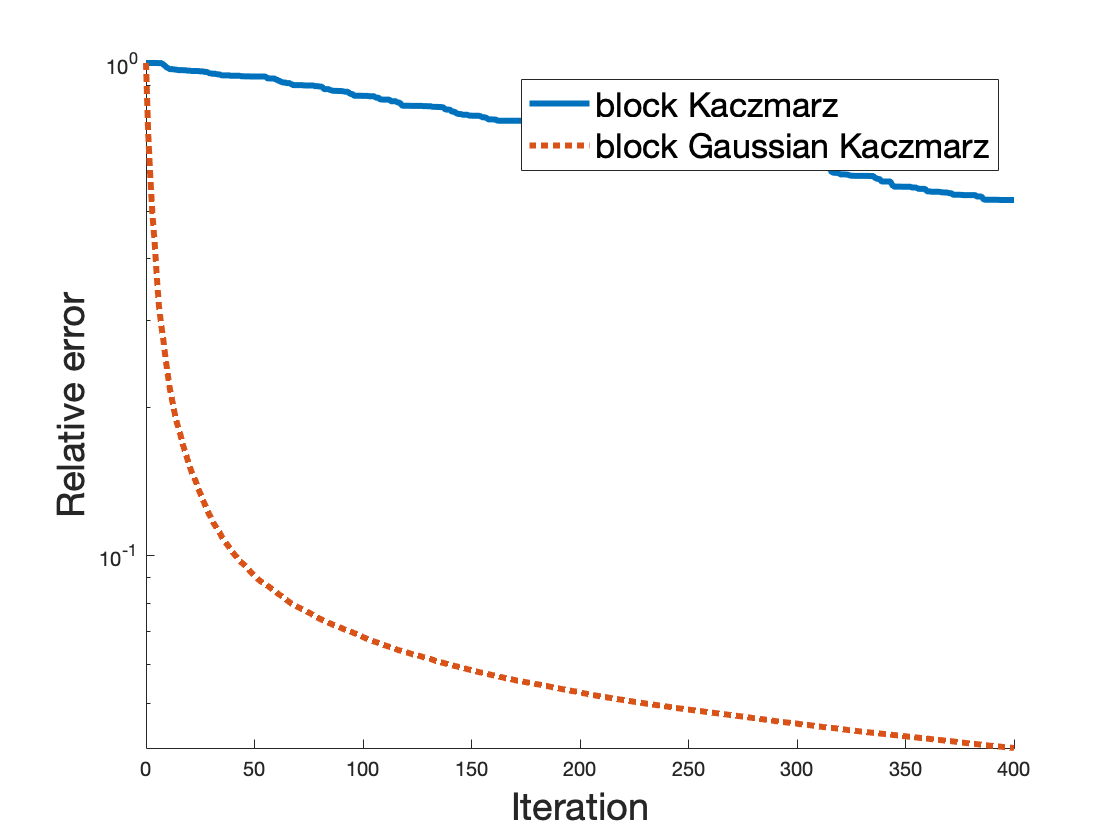} \includegraphics[width=0.5\columnwidth]{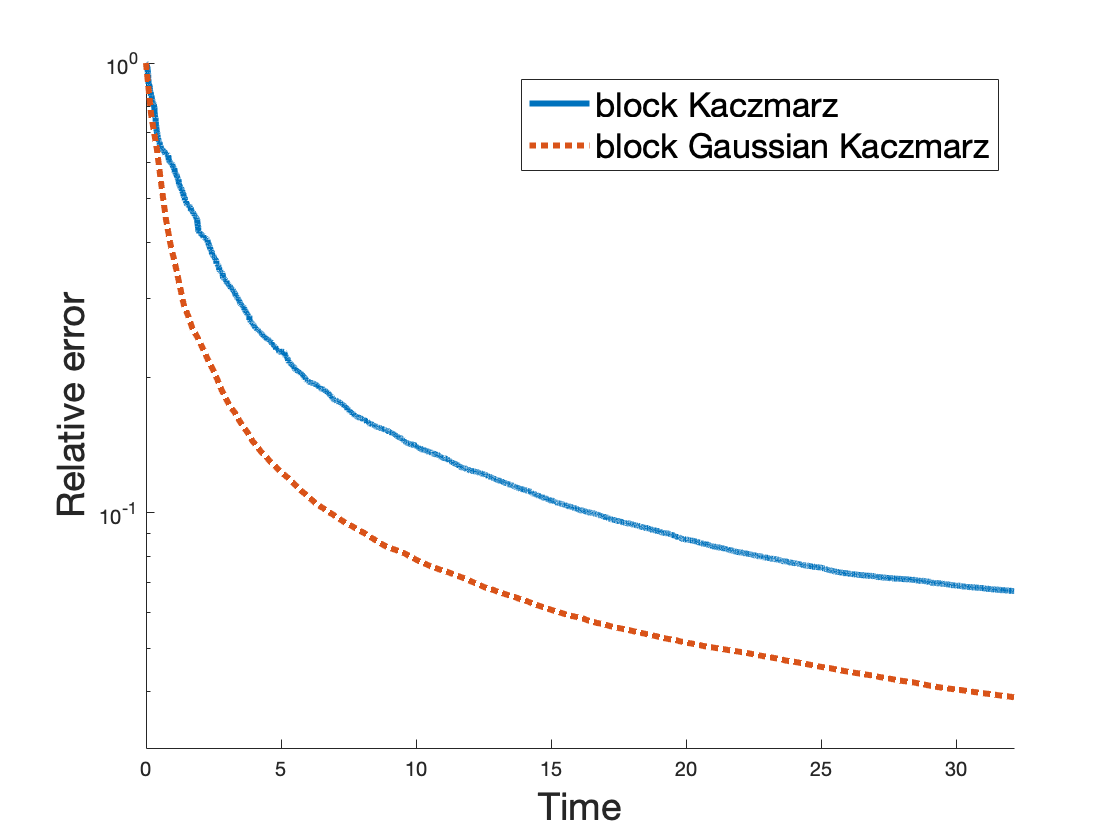}}
\caption{Mixed model: iteration (left) and time (right) vs relative error for the BGK with $s=100$.}
\label{fig:mixed}
\end{figure}

\begin{figure}
\makebox[\columnwidth]{\includegraphics[width=0.5\columnwidth]{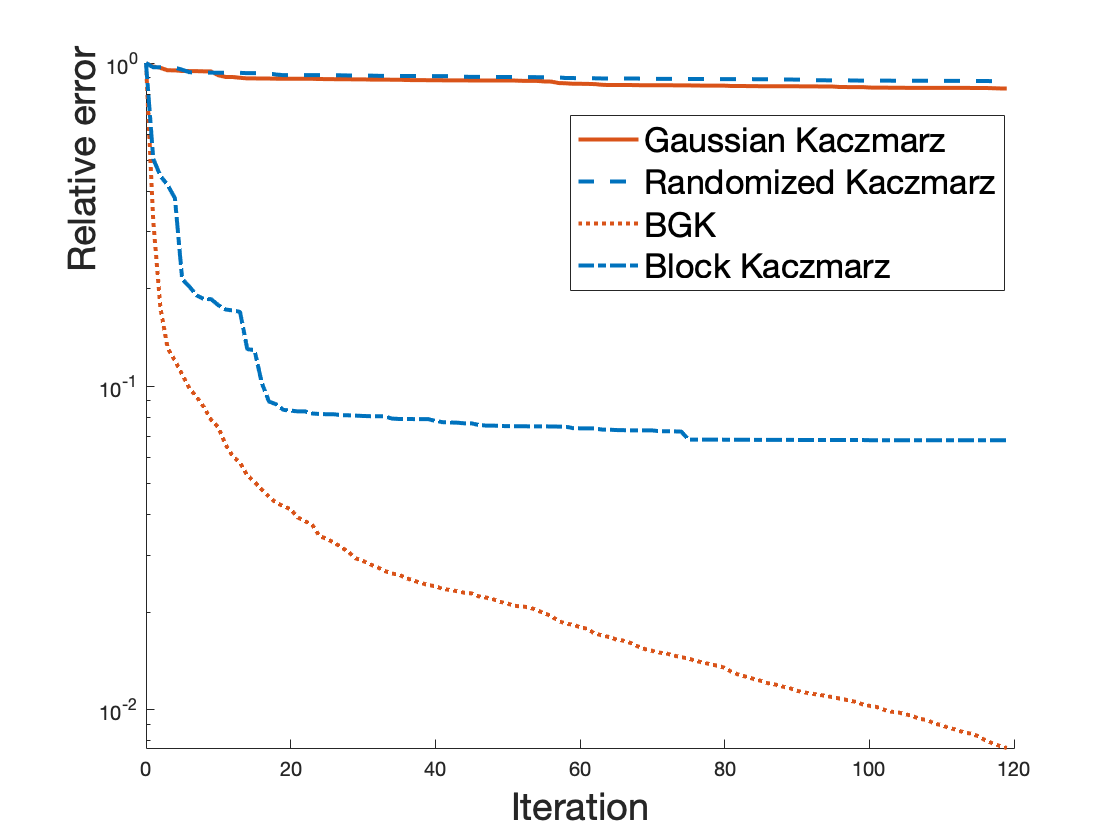}\includegraphics[width=0.5\columnwidth]{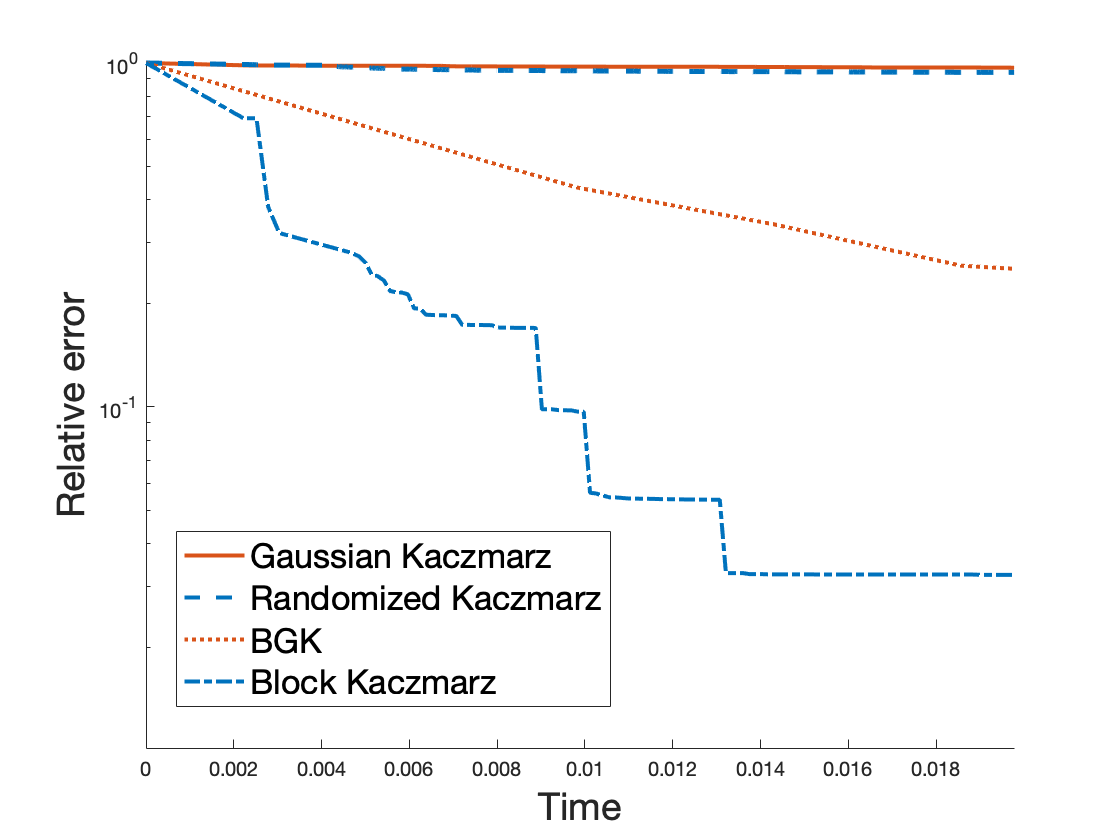}}
\caption{COVTYPE $10000\times 50$ dataset: iteration (left) and time (right) vs relative error for the BGK with $s=30$.}
\label{fig:real_experiments_covtype}
\end{figure}

\begin{figure}
\makebox[\columnwidth]{\includegraphics[width=0.5\columnwidth]{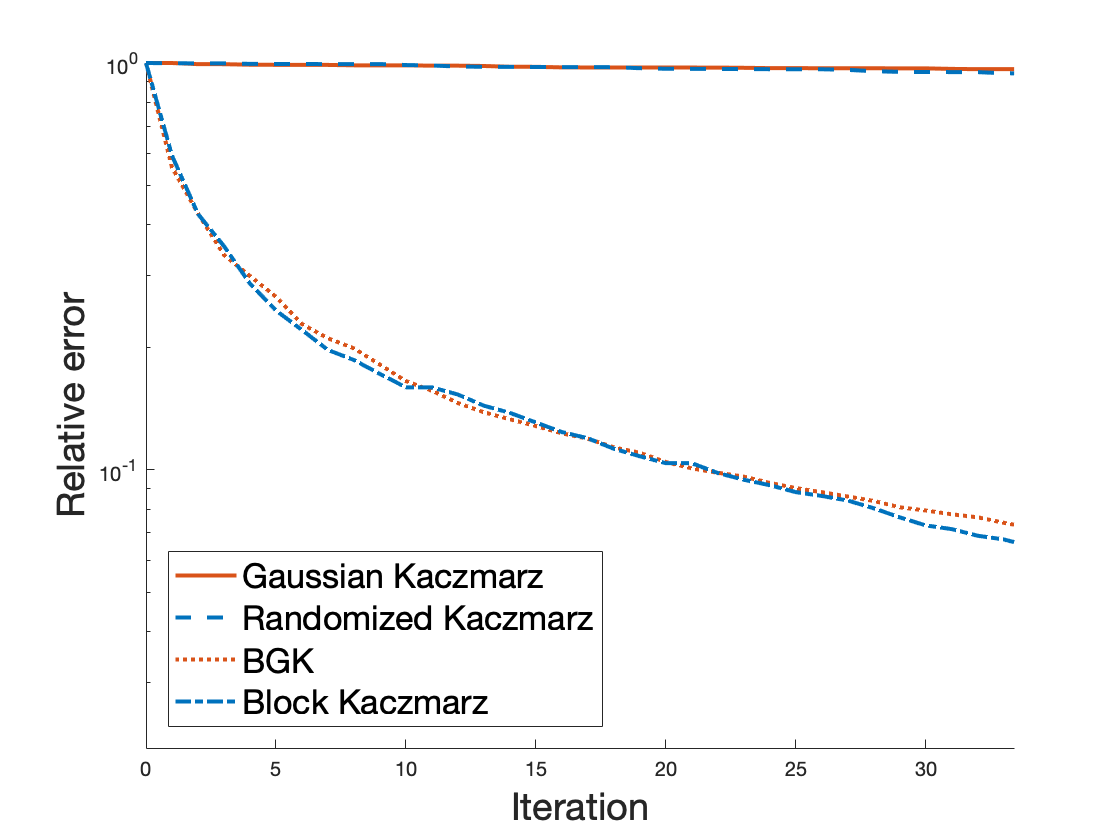}\includegraphics[width=0.5\columnwidth]{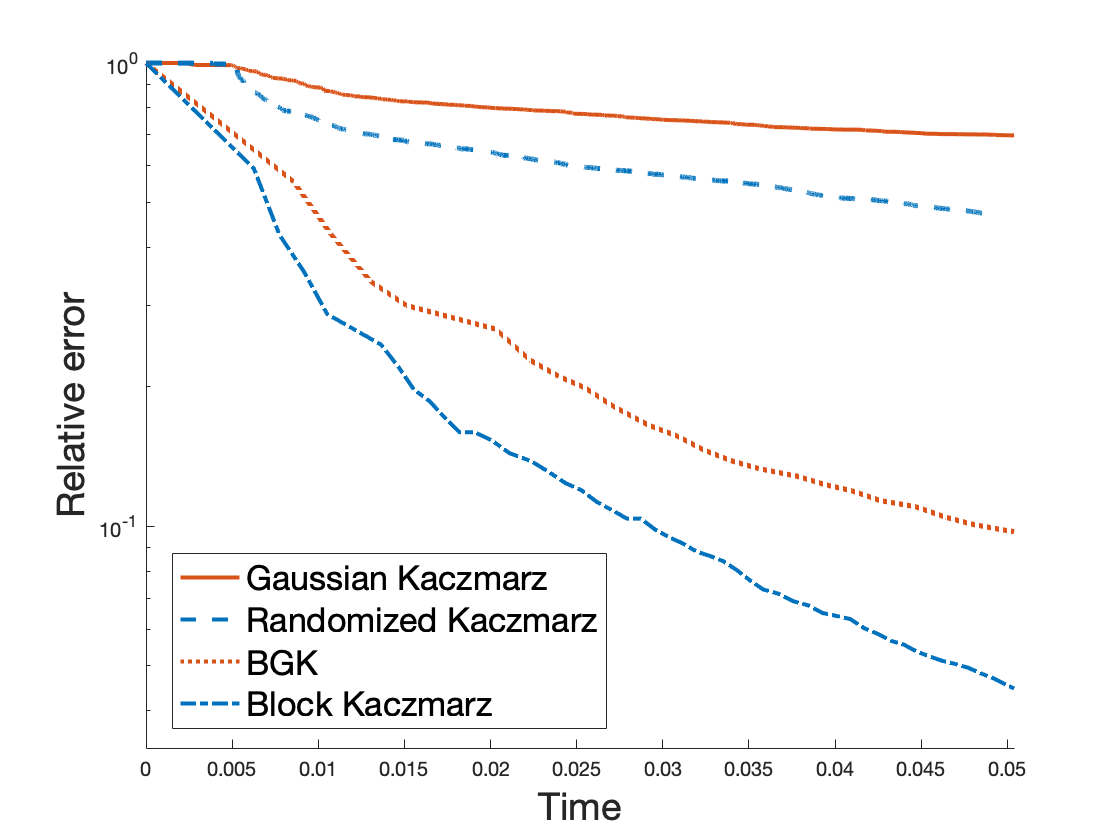}}
\caption{GAS $1000\times 128$ dataset: iteration (left) and time (right) vs relative error for the BGK with $s=50$.}
\label{fig:real_experiments_gas}
\end{figure}

\emph{In summary, Gaussian sketching is probably not the best way to proceed solving a general system of equations, either coherent or not, since it achieves exactly the same per-iteration performance as regular block Kaczmarz (so one would prefer the lightest possible iteration presented by selection of a random block rather than a matrix multiplication). On some special models, when the structure of $A$ leads to very poor conditioning, Gaussian sketching can in some sense automate this search due to its ``smoothing" property.}

\subsection{Experiments with real world datasets}

As we can see in Figure \ref{fig:method_comp_2} right, for both general artificial models (Gaussian and coherent) Block Kaczmarz and BGK show equally well per iteration progress. Given this, it does not make sense to use slower Gaussian sketching in practice. In order to see the advantage of Gaussian sketched iterations we had to construct a very special model (Figure~\ref{fig:mixed}). However, we also observe this phenomenon sometimes on real world datasets. For example, COVTYPE dataset also has this property: Gaussian block sketched iterations are more efficient than the iterations of a standard block method (Figure~\ref{fig:real_experiments_covtype} left). Unlike in the case of the artificial ``mixed" model ( Figure~\ref{fig:mixed}), here the gain in per iteration efficiency is not enough to have better convergence rate in time (Figure~\ref{fig:real_experiments_covtype} right). Only given some more efficient distributed realization of sketching could we expect BGK to be the most time efficient on the datasets like COVTYPE.

Moreover, there are datasets like GAS where there is no per iteration gain due to Gaussian sketches (see \ref{fig:real_experiments_gas}). Both  mentioned  datasets  are  taken  from  the  UCI  repository \cite{Dua:2019}.

\emph{In summary, there are real world datasets on which it is beneficial to use Gaussian block sketching to make iterations converge faster. But it is likely that one needs to use a faster way  to apply sketches (speeding up matrix multiplication and potentially distributing its computation between several cores) in order to preserve this advantage in time evaluation.}

\subsection{Finite number of samples} Now, let us consider the finite form of sketching presented in Theorem~\ref{theor3}, when we pre-select a set of Gaussian matrices $\mathcal{S}$ and take sketch matrices from it. Although our theoretical analysis requires the cardinality of $\mathcal{S}$ to be at least $C m^2 \log m$, the numerical experiments show that in practice a cardinality much smaller than $m$ (on the order of $m/s$) is enough to reproduce the per-iteration convergence rate of the original BGK method. However, for smaller collections it might take several extra iterations to converge, and with very small collections $\mathcal{S}$, the method stops converging far from the solution (see Figure~\ref{fig:from_collect}).

\begin{figure}
\makebox[\columnwidth]{\includegraphics[width=0.33\columnwidth]{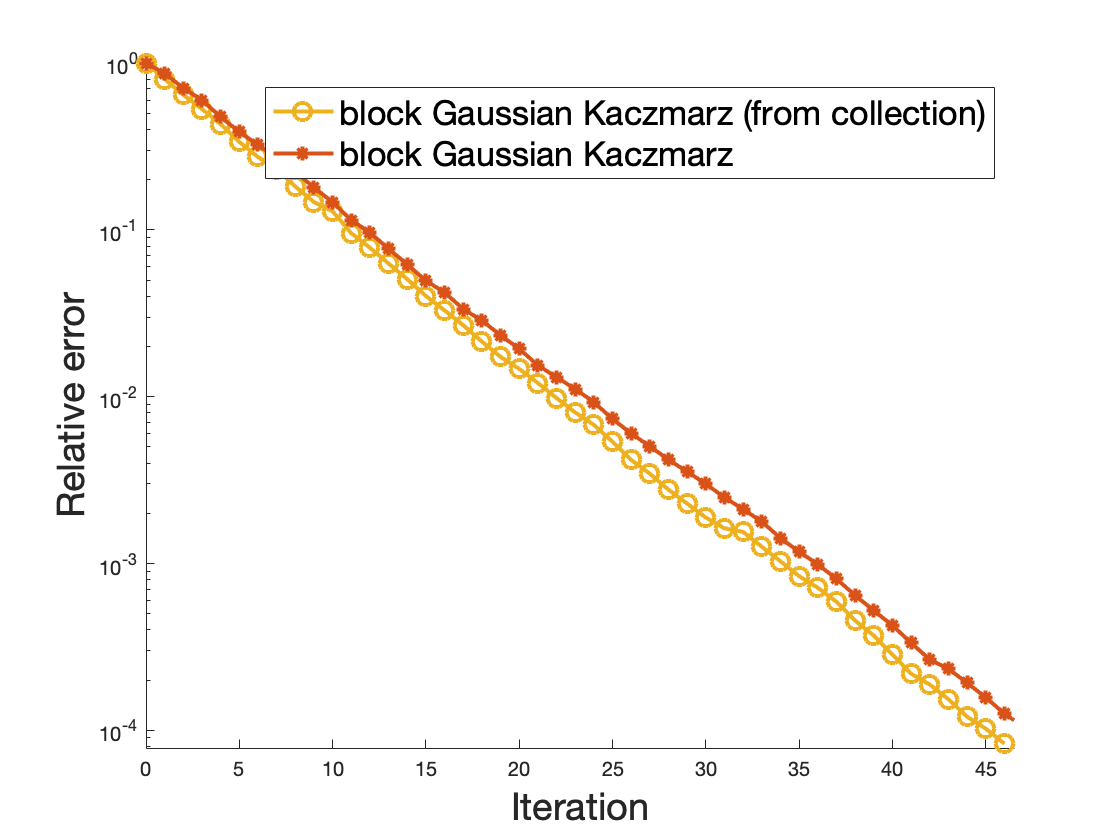} \includegraphics[width=0.33\columnwidth]{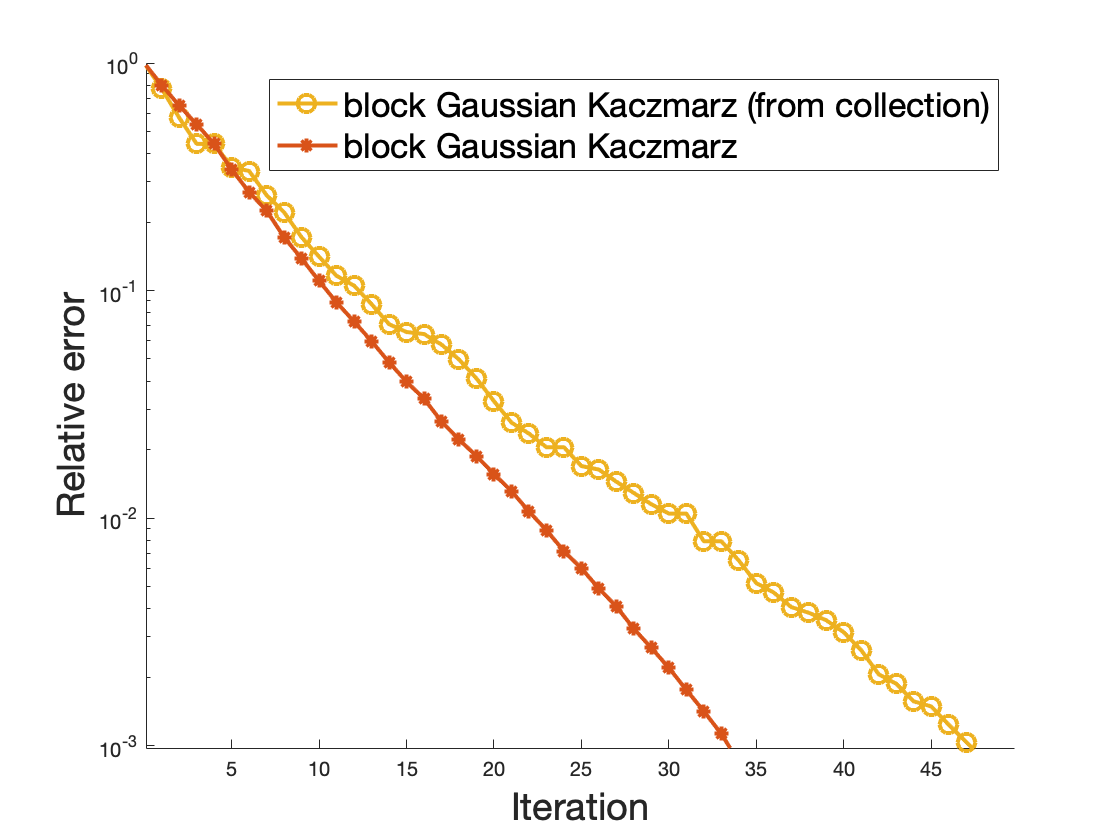}\includegraphics[width=0.33\columnwidth]{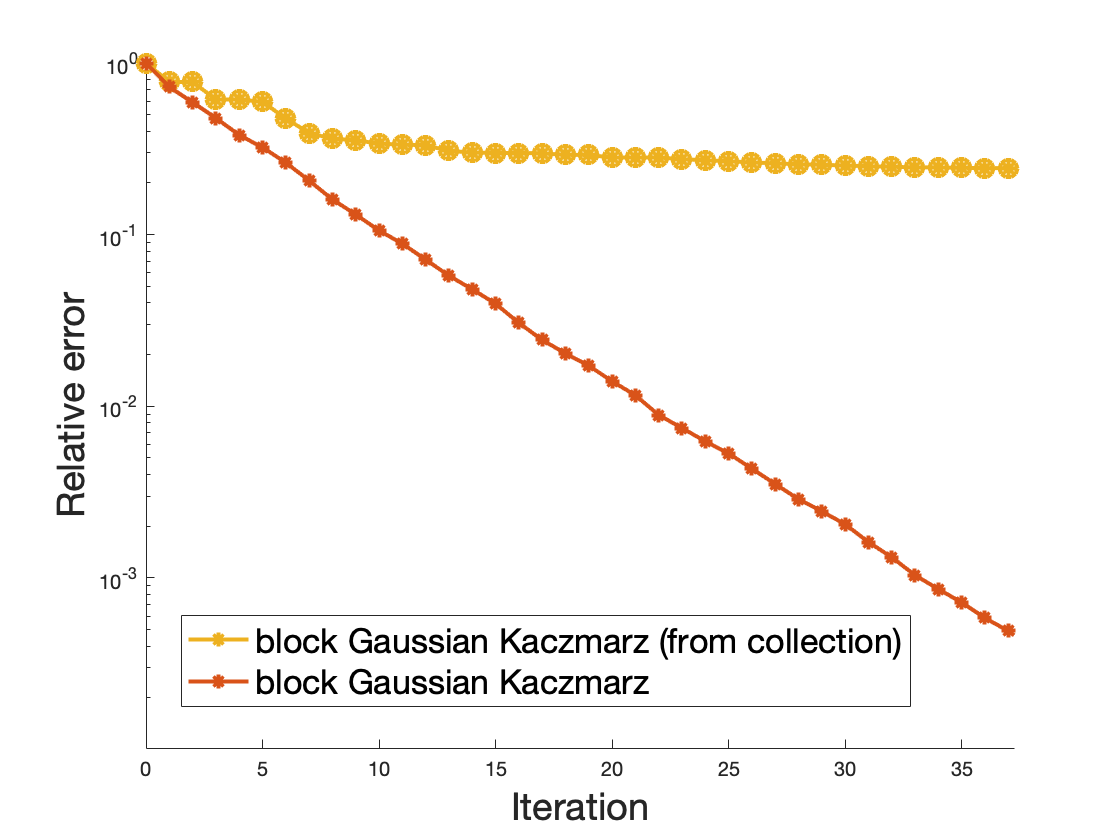}}
\caption{Red: BGK selecting new Gaussian sketch each time; yellow: same method selecting sketches from the finite collection $\mathcal{S}$.  Gaussian matrix model $A \sim randn(5000, 500)$, block size $s = 100$. Collection sizes: (left) $|\mathcal{S}| = 200$ -- convergence is as fast as it can be; (middle) $|\mathcal{S}| = 25$ -- several more iterations are needed to reach $1e$-$3$ relative error; (right)  $|\mathcal{S}| = 5$ -- collection is too small and convergence fails.}
\label{fig:from_collect}
\end{figure}

\emph{To summarize, one provably does not have to have infinite resources to carry out Gaussian sketching for the BGK method, but in practice it is usually inconvenient to pre-generate (and store) all samples we might need in the future, for both memory and computational reasons. It is an interesting mathematical question whether one can sharpen the size of $\mathcal{S}$ theoretically.}

\subsection{Inconsistent Gaussian Kaczmarz}\label{noisy}
In our final set of experiments, we consider the case when the system is inconsistent $Ax_* = b + e$. Then, Theorem~\ref{theor4} reveals a new dependence of the performance on the size of the sketch $s$. Indeed, the next iteration's relative error is the sum of two terms: the first one depending on the distance to the previous iteration, and the second one depending on the level of the noise $\|e\|_2$ (see \eqref{c_rate_noisy}). The first term is the same as in the noiseless case, and it gets smaller with increasing $s$, but the second ``convergence horizon" hits a singularity at $s = n$. So, taking sketch size $n$ for a one-step solution, like in the noiseless case, is a very poor choice now (see Figure~\ref{fig:noisy_blocks}, left).

Depending on the level of noise, the width of this singularity region changes (see Figure~\ref{fig:noisy_blocks}, right). Although it still seems that taking  large $s > n$ a faster solution, without the prior knowledge about the size of the singularity region, it might be safer to take smaller sketches $s \ll n$. Figure~\ref{fig:noisy_blocks} also suggests that the optimal $s$ in the range $s \in [1, n]$ also seems to depend on the noise level.

\begin{figure}
\centering
\makebox[\columnwidth]{\includegraphics[width=0.5\columnwidth]{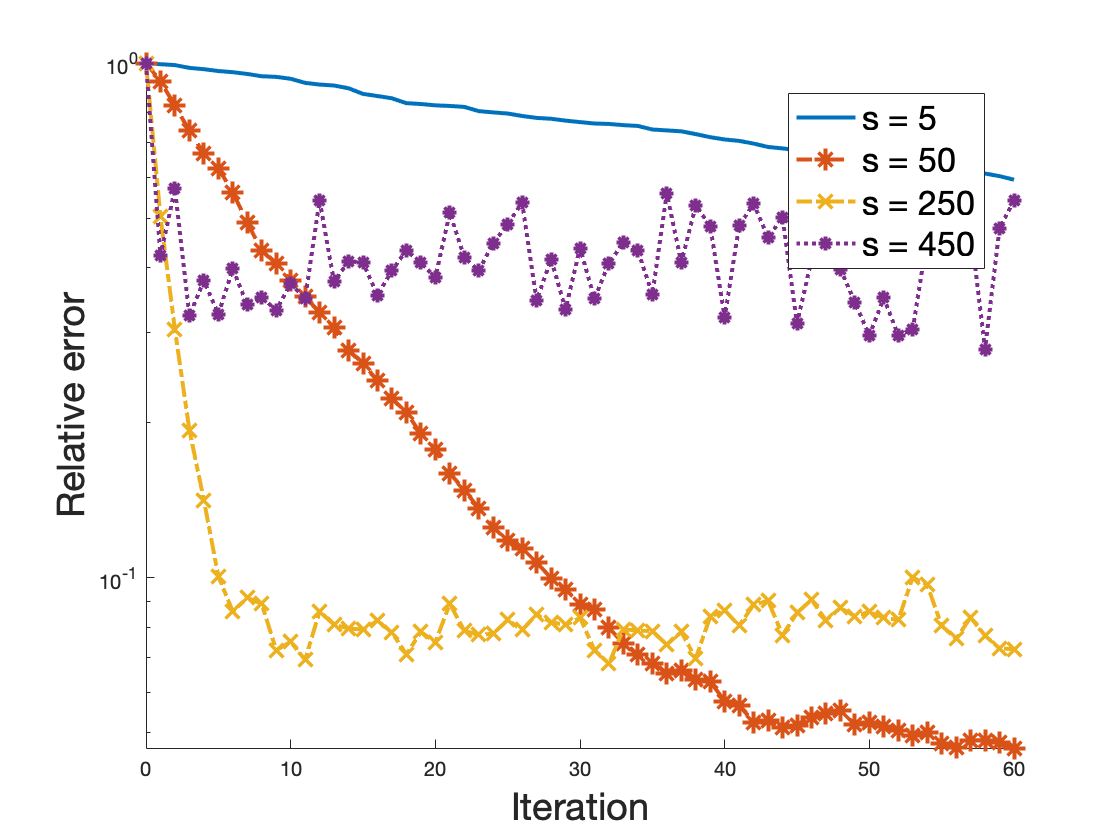} \includegraphics[width=0.5\columnwidth]{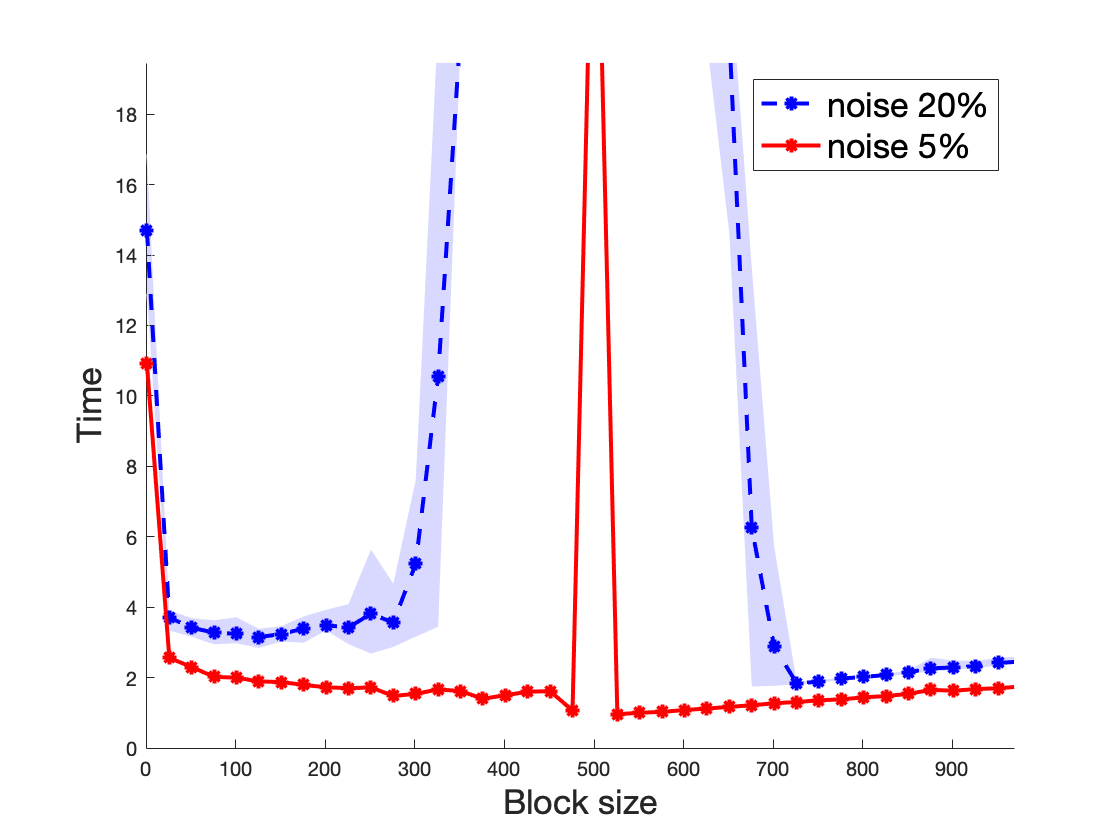}}
\caption{Left: Gaussian model with with $20\%$ Gaussian noise: dependence on the block size $s$. Right: time to reach $1e$-$1$ error for various block sizes, $\|e\|_2$ is normalized to be $20\%$ ($5\%$) of $\|b\|_2$}
\label{fig:noisy_blocks}
\end{figure}
 
Finally, Figure~\ref{fig:spiky_noise} shows the rate of per-iteration convergence of the Gaussian and coherent systems in the presence of sparse spiky noise (in the Gaussian case, it was generated as $50$ random spikes of  magnitude $50$, and in the coherent case it was generated as as $10$ spikes of magnitude $25$). We can see that the Gaussian system is much more robust to the same noise level. We also see that without Gaussian sketching the convergence rate experiences random spikes at some iterations (when the current block contains the index corresponding to one of ten ``spikes" in the noise). This incurs significant variability of the error at each specific iteration. Although Gaussian sketching makes the resulting convergence horizon a bit higher, it completely resolves the variability problem. Informally, this happens since the Gaussian sketch incorporates all the information about the error term into each iteration, ``smoothing" the spikes in the noise. See also Remark~\ref{high_prob_remark}.

\begin{figure}
\centering
\makebox[\columnwidth]{\includegraphics[width=0.5\columnwidth]{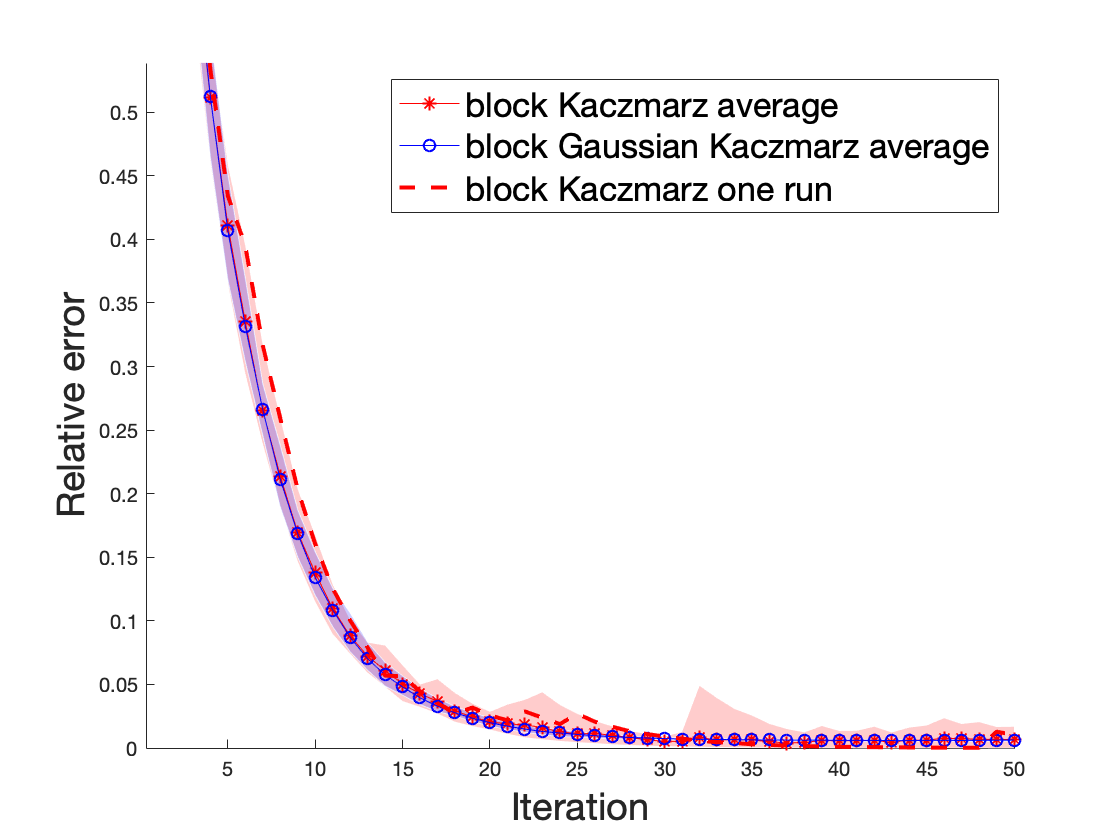} \includegraphics[width=0.5\columnwidth]{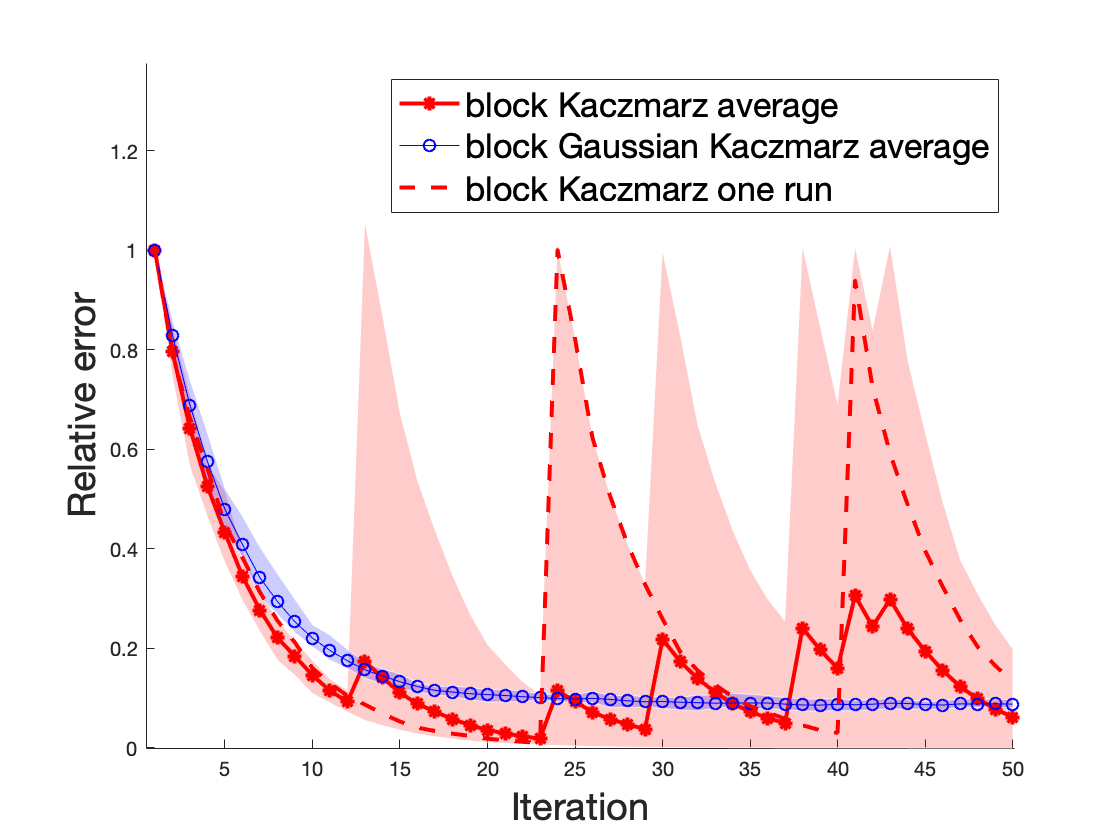}}
\caption{Gaussian (left) and coherent (right) models with spiky noise: iteration average relative error and its range over $10$ runs; $s = 100$}
\label{fig:spiky_noise}
\end{figure}

\emph{In summary, for inconsistent systems one must be careful with the choice of the sketch size, since choosing $s \sim n$ makes the error term very large, and the method may not converge. In the inconsistent case, Gaussian sketching once again does not seem to improve the convergence rate over the standard block method with the same size $s$; however it can reduce the variability of the iterates, making the convergence more predictable. So, Gaussian sketching might be used in noisy systems for the sake of robustness, rather than efficiency.}

\section{Conclusions and future directions} \label{conclusions}
To the best of our knowledge, our paper presents the first theoretical analysis of the exponential convergence properties of the BGK method for arbitrary sketch size (Theorems~\ref{theor1} and \ref{theor2}). For the one-dimensional sketches, when $s=1$, Theorem~\ref{theor2} recovers the expected exponential convergence rate with the factor $1 - c\sigma_{min}^2(A)/ \|A\|_F^2$, recovering the results of Gower and Richt\'arik \cite{GowRic}. 

Unlike the analysis of the randomized block Kaczmarz method, presented in \cite{NeeTro}, our convergence guarantees for the Gaussian version do not require any additional structural assumptions on the matrix (such as regularized rows), or any non-trivial preprocessing (to find a ``good" partition of the rows). Additionally, our bounds allow to trace the dependence of the convergence rate on the size of the sketch.

We also analyze a finite collection approach, that allows one to avoid a  potentially infinite generation of the Gaussian sketch matrices.  Indeed, we prove (Theorem~\ref{theor3}) that with high probability, a random collection of Gaussian matrices will provide the same rate of convergence bound as the original random sampling approach. Finally, we give the theoretical convergence analysis for the case of the inconsistent systems of equations (Theorems~\ref{theor4} and \ref{theor5} ).

Our numerical experiments confirm the theoretical claims about the dependence on the sketch size in the consistent and inconsistent cases, and outperform the theoretically required size of the finite sampling collection of sketches. In terms of the convergence time, our experiments support  the observation that Gaussian methods almost always require more computation to reach the desired solution as their non-sketching standard counterparts. We found only a special set of examples when the block Gaussian method may be practically preferable to the standard randomized block method  due to the ``smoothing" effect. Besides these carefully chosen models, perhaps the only other advantage the Gaussian methods exhibit is the unsurprising reduction in variance of the iterates.  Provably quantifying this may be another interesting mathematical problem.  

Among potential further directions of this work is theoretical justification that the sample set $|\mathcal{S}| \sim O(m)$ is enough for the result like Theorem~\ref{main3} to hold. It would be also interesting to get the convergence rate estimate that explicitly demonstrates the gain of the Gaussian sketching in the coherent case (as we see in Figure 3, per iteration rates of all block versions are the same for the incoherent and coherent systems, but the theoretical rate depends on $\sigma_{min}(A)$, and it is much worse in the latter case). So far the only progress in this direction was made in \cite{NeeWar} for the two dimensional subspace projection.
  
 It is natural to consider other sketches than Gaussian (in particular, sparsified Gaussian or Fourier sketches that allow faster multiplication). A slight modification of our analysis in Theorem~\ref{main2} would extend the analysis to any sub-gaussian sketch matrices when the variance of each entry is bounded from below away from zero (which excludes very sparse sketch matrices). However, a systematic study of the various sketches is still among the further directions of the current work. Another potential extension could be to solve inexact linear systems (without the assumption about full rank column space of $A$), see also \cite{loizou2019convergence} for the discussion of this framework.

\bibliography{liza-bib-copy}   

\begin{thebibliography}{10}

\bibitem{BouLugMas}
S.~Boucheron, G.~Lugosi, and P.~Massart.
\newblock {\em Concentration inequalities: A nonasymptotic theory of
  independence}.
\newblock Oxford university press, 2013.

\bibitem{chen2005condition}
Z.~Chen and J.~J. Dongarra.
\newblock Condition numbers of gaussian random matrices.
\newblock {\em SIAM J. Matrix Anal. A.}, 27(3):603--620, 2005.

\bibitem{DavSza}
K.~R. Davidson and S.~J. Szarek.
\newblock {L}ocal operator theory, random matrices and {B}anach spaces.
\newblock In {\em Handbook of the geometry of {B}anach spaces}, volume~2, pages
  1819--1820. North-Holland, Amsterdam, 2003.

\bibitem{LoeHadNee}
J.~A. De~Loera, J.~Haddock, and D.~Needell.
\newblock A sampling {K}aczmarz-{M}otzkin algorithm for linear feasibility.
\newblock {\em SIAM J. Sci. Comput.}, 39(5):66--87, 2017.

\bibitem{Dua:2019}
D.~Dua and C.~Graff.
\newblock {UCI} machine learning repository, 2017.

\bibitem{edelman1989eigenvalues}
A.~Edelman.
\newblock {\em Eigenvalues and Condition Numbers of Random Matrices}.
\newblock PhD thesis, Massachusetts Institute of Technology, 1989.

\bibitem{Elf}
T.~Elfving.
\newblock Block-iterative methods for consistent and inconsistent linear
  equations.
\newblock {\em Numer. Math.}, 35(1):1--12, 1980.

\bibitem{Feinco}
H.~G. Feichtinger, C.~Cenker, M.~Mayer, H.~Steier, and T.~Strohmer.
\newblock New variants of the {POCS} method using affine subspaces of finite
  codimension with applications to irregular sampling.
\newblock In {\em P. Soc. Photo-Opt. Ins.}, volume 1818, pages 299--311.
  International Society for Optics and Photonics, 1992.

\bibitem{feldheim2010universality}
O.~N. Feldheim and S.~Sodin.
\newblock A universality result for the smallest eigenvalues of certain sample
  covariance matrices.
\newblock {\em Geom. Funct. Anal.}, 20(1):88--123, 2010.

\bibitem{gordon1985some}
Y.~Gordon.
\newblock Some inequalities for gaussian processes and applications.
\newblock {\em Israel J. Math.}, 50(4):265--289, 1985.

\bibitem{GowRic}
R.~M. Gower and P.~Richt{\'a}rik.
\newblock Randomized iterative methods for linear systems.
\newblock {\em SIAM J. Matrix Anal. A.}, 36(4):1660--1690, 2015.

\bibitem{gower2015stochastic}
R.~M. Gower and P.~Richt{\'a}rik.
\newblock Stochastic dual ascent for solving linear systems.
\newblock {\em arXiv preprint arXiv:1512.06890}, 2015.

\bibitem{herman1993algebraic}
G.~T. Herman and L.~B. Meyer.
\newblock Algebraic reconstruction techniques can be made computationally
  efficient.
\newblock {\em IEEE T. Med. Imaging}, 12(3):600--609, 1993.

\bibitem{Kaz}
S.~Kaczmarz.
\newblock {A}ngen\"{a}herte {A}ufl\"{o}sung von {S}ystemen linearer
  {G}leichungen.
\newblock {\em B. Int. Acad. Pol. Sci. Lett.}, pages 335--357, 1937.

\bibitem{loizou2017momentum}
N.~Loizou and P.~Richt{\'a}rik.
\newblock Momentum and stochastic momentum for stochastic gradient, newton,
  proximal point and subspace descent methods.
\newblock {\em arXiv preprint arXiv:1712.09677}, 2017.

\bibitem{loizou2019convergence}
N.~Loizou and P.~Richt{\'a}rik.
\newblock Convergence analysis of inexact randomized iterative methods.
\newblock {\em arXiv preprint arXiv:1903.07971}, 2019.

\bibitem{Nat}
F.~Natterer.
\newblock {\em The mathematics of computerized tomography}.
\newblock B. G. Teubner, Stuttgart; John Wiley \& Sons, Ltd., Chichester, 1986.

\bibitem{Nee}
D.~Needell.
\newblock Randomized {K}aczmarz solver for noisy linear systems.
\newblock {\em BIT}, 50(2):395--403, 2010.

\bibitem{NeeTro}
D.~Needell and J.~A. Tropp.
\newblock Paved with good intentions: analysis of a randomized block {K}aczmarz
  method.
\newblock {\em Linear Algebra Appl.}, 441:199--221, 2014.

\bibitem{NeeWar}
D.~Needell and R.~Ward.
\newblock Two-subspace projection method for coherent overdetermined systems.
\newblock {\em J. Fourier Anal. Appl.}, 19(2):256--269, 2013.

\bibitem{NeeWarSre}
D.~Needell, R.~Ward, and N.~Srebro.
\newblock Stochastic gradient descent, weighted sampling, and the randomized
  {K}aczmarz algorithm.
\newblock In {\em Adv. Neur. In.}, pages 1017--1025, 2014.

\bibitem{RebTik}
E.~Rebrova and K.~Tikhomirov.
\newblock Coverings of random ellipsoids, and invertibility of matrices with
  iid heavy-tailed entries.
\newblock {\em Israel J. Math.}, 227(2):507--544, 2018.

\bibitem{LittleOffInv}
M.~Rudelson and R.~Vershynin.
\newblock The {L}ittlewood-{O}fford problem and invertibility of random
  matrices.
\newblock {\em Adv. Math.}, 218(2):600--633, 2008.

\bibitem{RV-rectangular}
M.~Rudelson and R.~Vershynin.
\newblock Smallest singular value of a random rectangular matrix.
\newblock {\em Commun. Pur. Appl. Math.}, 62(12):1707--1739, 2009.

\bibitem{SesSta}
M.~I. Sezan and H.~Stark.
\newblock Incorporation of a priori moment information into signal recovery and
  synthesis problems.
\newblock {\em J. Math. Anal. Apl.}, 122(1):172--186, 1987.

\bibitem{StrVer}
T.~Strohmer and R.~Vershynin.
\newblock A randomized {K}aczmarz algorithm with exponential convergence.
\newblock {\em J. Fourier Anal. Appl.}, 15(2):262, 2009.

\bibitem{Sh1}
S.~J. Szarek.
\newblock Condition numbers of random matrices.
\newblock {\em Journal of Complexity}, 7(2):131--149, 1991.

\bibitem{TanVer}
Y.~S. Tan and R.~Vershynin.
\newblock Phase retrieval via randomized {K}aczmarz: Theoretical guarantees.
\newblock {\em Information and Inference: A Journal of the IMA}, 8(1):97--123,
  2018.

\bibitem{VerHDP}
R.~Vershynin.
\newblock {\em High-dimensional probability: An introduction with applications
  in data science}, volume~47.
\newblock Cambridge University Press, 2018.

\bibitem{Feng}
F.~Wei.
\newblock Upper bound for intermediate singular values of random matrices.
\newblock {\em J. Math. Anal. Appl.}, 445(2):1530--1547, 2017.

\end{thebibliography}

\end{document}